[1994/12/01]
\documentclass{ijmart-mod}
\chardef\bslash=`\\ 

\usepackage[T2A,T1]{fontenc}
\newcommand{\lob}{\mbox{\usefont{T2A}{\rmdefault}{m}{n}\cyrl}}

\usepackage{bm}
\usepackage{graphicx}
\usepackage[breaklinks=true]{hyperref}
\usepackage{mathtools}
\usepackage{caption}
\usepackage{amsmath}
\usepackage{array}
\usepackage{multirow}
\usepackage{soul}

\newtheorem{thm}{Theorem}[section]

\newtheorem{lem}[thm]{Lemma}
\newtheorem{prop}[thm]{Proposition}

\theoremstyle{definition}
\newtheorem{defn}[thm]{Definition}
\newtheorem{rem}[thm]{Remark}

\theoremstyle{remark}

\newcommand{\eval}[2][\right]{\relax
  \ifx#1\right\relax \left.\fi#2#1\rvert}

\begin{document}

\title{Flip-graph non-convexity for once-punctured~polygons}

\author[L. Pournin]{\textup{Lionel Pournin}}
\address{Universit{\'e} Paris 13, Villetaneuse, France}
\email{lionel.pournin@univ-paris13.fr}

\author[Z. Wang]{\textup{Zili Wang}}
\address{Sun Yat-sen University, Shenzhen, Guangdong, China}
\email{wangzli6@mail.sysu.edu.cn} 

\maketitle

\begin{abstract}
The set of the triangulations with vertex set $X$ of a simple polygon $\mathrm{P}$ can be structured into a flip-graph $\mathcal{F}(\mathrm{P},X)$ whose edges connect two triangulations that differ by a single arc. The geometry of flip-graphs has been thoroughly studied and it is known that the subgraph $\mathcal{F}_\varepsilon(\mathrm{P},X)$ induced by the triangulations that contain a given arc $\varepsilon$ is strongly convex in $\mathcal{F}(\mathrm{P},X)$ when $\mathrm{P}$ is convex and $X$ contains no puncture (points in the interior of~$\mathrm{P}$) and at most one flat vertex (points in the interior of an edge). When $X$ contains at least two punctures or flat vertices, it is also known that this strong convexity property fails. Here, we close the last open case by showing that, for any convex polygon with sufficiently many vertices, one can always place a single puncture in $X$ in such a way that $\mathcal{F}_\varepsilon(\mathrm{P},X)$ is not strongly convex in $\mathcal{F}(\mathrm{P},X)$. We prove a similar result for simple polygons with a single reflex vertex. The main ingredients in our proofs are a decomposition lemma for a class of $3$-dimensional triangulations and a hyperbolic volume argument regarding their embedding into $\mathbb{H}^3$.
\end{abstract}


\section{Introduction}\label{PW2.sec.1}

Consider a simple Euclidean polygon $\mathrm{P}$ and a finite subset $X$ of $\mathrm{P}$ that contains all the vertices of $\mathrm{P}$. The points in $X$ that are not vertices of $\mathrm{P}$ will be called \emph{punctures} of $P$ when they belong to the interior of $\mathrm{P}$ (in analogy with the non-Euclidean case) and \emph{flat vertices} of $\mathrm{P}$ when they belong to the relative interior of an edge. One can decompose $\mathrm{P}$ into triangles by cutting it along a collection $T$ of pairwise non-crossing line segments between points from $X$. Here, we assume that every point in $X$ is a vertex of at least one line segment from $T$. Such a set $T$ is called a \emph{triangulation} of $\mathrm{P}$ with vertex set $X$ and the line segments it contains will be referred to as its \emph{arcs}. 
The set of the triangulations of $\mathrm{P}$ with vertex set $X$ can be given a geometry by considering the graph $\mathcal{F}(\mathrm{P},X)$ whose vertices are these triangulations and whose edges connect two triangulations that differ by a single arc. This graph is called a \emph{flip-graph} because its edges also correspond to the local operation, called a \emph{flip}, that exchanges the diagonals of a convex quadrilateral within a triangulation of $\mathrm{P}$.

The geometry of flip-graphs has received a lot of attention due to its importance in various settings in combinatorics \cite{CardinalHoffmannKustersTothWettstein2018,CardinalSteiner2025,ChangDefantFrischberg2025,ClearyMaio2018,Lee1989,Pournin2014,SleatorTarjanThurston1988}, low-dimensional topology \cite{DisarloParlier2019,GultepeLeininger2017,KorkmazPapadopoulos2012,PandaParlierPournin2025,ParlierPetri2018,ParlierPournin2017}, or computer science \cite{CunhaSauSouzaValencia-Pabon2025,Dorfer2026,HurtadoNoy1999,KanjSedgwickXia2017,LubiwPathak2015,Pilz2014,SleatorTarjanThurston1988}. It has been shown in \cite{SleatorTarjanThurston1988} that, when $\mathrm{P}$ is convex and $X$ is its vertex set, the subgraph $\mathcal{F}_\varepsilon(\mathrm{P},X)$ induced in $\mathcal{F}(\mathrm{P},X)$ by the triangulations that contain a given arc $\varepsilon$ is strongly convex in the sense that all the geodesics in $\mathcal{F}(\mathrm{P},X)$ between two vertices of $\mathcal{F}_\varepsilon(\mathrm{P},X)$ remain entirely in $\mathcal{F}_\varepsilon(\mathrm{P},X)$. This property is instrumental when computing distances in $\mathcal{F}(\mathrm{P},X)$ \cite{ClearyStJohn2018,LiXia2025,Pournin2014,SleatorTarjanThurston1988} and is known to fail when $\mathrm{P}$ is non-convex or when $X$ contains sufficiently-many punctures or flat vertices \cite{LubiwPathak2015,Pilz2014,PourninWang2021}. It is shown in \cite{PourninWang2021} that, already when $\mathrm{P}$ is convex and $X$ contains down to two points that can be either flat vertices or punctures sufficiently close to the boundary, $\mathcal{F}_\varepsilon(\mathrm{P},X)$ is sometimes not even weakly convex in the sense that no geodesic between two vertices of $\mathcal{F}_\varepsilon(\mathrm{P},X)$ is entirely contained in $\mathcal{F}_\varepsilon(\mathrm{P},X)$. When $X$ contains a single flat vertex and no puncture or no flat vertex and a single puncture that is close enough to the boundary, $\mathcal{F}_\varepsilon(\mathrm{P},X)$ is always a strongly convex subgraph of $\mathcal{F}(\mathrm{P},X)$ \cite{ParlierPournin2018b,PourninWang2021}. Likewise, if $X$ contains no flat vertex and a single puncture, that can be placed anywhere in the interior of $\mathrm{P}$, this strong convexity property holds when $\varepsilon$ is incident to the puncture \cite[Theorem 4.3]{ParlierPournin2018b}. Here, we settle the last open case regarding the strong convexity of $\mathcal{F}_\varepsilon(\mathrm{P},X)$ within $\mathcal{F}(\mathrm{P},X)$. 

\begin{thm}\label{PW2.sec.1.thm.0}
Consider a convex polygon $\mathrm{P}$ with vertex set $X$. If $X$ is large enough, then one can place a puncture $p$ in $\mathrm{P}$ such that $\mathcal{F}_\varepsilon(\mathrm{P},X\cup\{p\})$ is not a strongly convex subgraph of $\mathcal{F}(\mathrm{P},X\cup\{p\})$ for some line segment $\varepsilon$.
\end{thm}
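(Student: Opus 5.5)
The plan is to exhibit, for $n=|X|$ large, two triangulations $U$ and $V$ of $\mathrm{P}$ with vertex set $X\cup\{p\}$. Both will contain a common arc $\varepsilon$, chosen not incident to $p$ because of \cite[Theorem 4.3]{ParlierPournin2018b}. We will show that $d(U,V)$ in $\mathcal{F}(\mathrm{P},X\cup\{p\})$ is strictly smaller than the flip distance between $U$ and $V$ inside $\mathcal{F}_\varepsilon(\mathrm{P},X\cup\{p\})$. Equivalently, some geodesic between $U$ and $V$ must flip $\varepsilon$. The puncture $p$ will be placed deep in the interior, say near the center of $\mathrm{P}$, since a puncture close to the boundary provably yields strong convexity \cite{PourninWang2021}.

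For the construction I would follow the classical Sleator--Tarjan--Thurston picture. Take $\varepsilon$ to be an arc cutting $\mathrm{P}$ into two pieces, one of which contains $p$. Choose $U$ and $V$ so that, restricted to the piece containing $p$, they look like two ``combs'' or zigzags whose flip distance is forced to be large, because $\varepsilon$ prevents $p$ from being a hub. Flipping $\varepsilon$ early should let $p$ become a high-degree vertex. One can then route through a triangulation where most arcs are incident to $p$, which produces a shortcut saving at least one flip.

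\begin{itemize}
\item The upper bound on $d(U,V)$ is an explicit flip sequence. It is routine once the shapes are fixed.
\item The real difficulty is the lower bound on the distance restricted to $\mathcal{F}_\varepsilon$.
\end{itemize}

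Within $\mathcal{F}_\varepsilon$, the problem splits into two independent subpolygons. The side without $p$ is a convex polygon, where exact distances are known. The side with $p$ is a once-punctured polygon, with $p$ fixed inside.

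For this lower bound I would pass to the three-dimensional viewpoint, as the abstract suggests. A flip sequence from $U$ to $V$ gives a triangulation of a ball (a stack of tetrahedra, one per flip). Here the puncture corresponds to a vertex whose link must be handled. The first step is a decomposition lemma: any such $3$-dimensional triangulation realizing a path in $\mathcal{F}_\varepsilon$ splits along the disc spanned by $\varepsilon$ into pieces associated with each side. The second step embeds the vertices on the sphere at infinity of $\mathbb{H}^3$, with $p$ placed suitably. Each ideal tetrahedron then has volume at most $v_3$, so the number of flips is at least the hyperbolic volume of the convex hull divided by $v_3$.

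Choosing $U$ and $V$ so that this volume bound is nearly sharp gives a lower bound for paths in $\mathcal{F}_\varepsilon$ that exceeds the explicit unrestricted path length for $n$ large. The main obstacle is this lower bound. I must control the volume contribution near the puncture, since the tetrahedra there are not ideal and $p$ sits at a finite point or cusp. I also have to ensure the asymptotic gap beats the additive constant of the shortcut. I would first try to make the shortcut save a linear number of flips relative to the volume bound, so that crude error terms suffice.
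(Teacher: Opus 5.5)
There is a genuine gap, and it sits exactly at the step you call the main obstacle. It is not a matter of controlling error terms. Fix any placement $\phi$ of $X\cup\{p\}$ on the sphere at infinity. The signed volumes of the ideal tetrahedra coming from the flips of \emph{any} path from $U$ to $V$ in $\mathcal{F}(\mathrm{P},X\cup\{p\})$ sum to the degree-weighted volume enclosed by the $2$-cycle $\phi(V)-\phi(U)$. That quantity depends only on $U$, $V$ and $\phi$. So the volume bound is a lower bound on the length of every path, including your shortcut through $\varepsilon$. It is therefore at most $d(U,V)$ and can never exceed the explicit unrestricted path length, let alone by a linear amount. Splitting along $\varepsilon$ and using different placements on the two sides does not help. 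Compose one placement with an isometry of $\mathbb{H}^3$ (orientation-reversing if necessary) that matches the two endpoints of $\varepsilon$; the two boundary cycles then simply add, so the sum of the two side bounds is again a bound for all paths.

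What is missing is an ingredient that actually uses the constraint on the path. In the paper this is the assumption that the blow-up triangulation $K$ of a geodesic is simplicial, i.e.\ no arc is removed and later reinserted. Only under that assumption does the generalized decomposition lemma (Lemma~\ref{PW2.sec.3.lem.5}, via Lemma~\ref{PW2.sec.5.lem.2}) provide the triangle $\Delta$ with vertices $a^n$, $a^{n+1}$, $d$. That triangle lets $K$ be cut into two pieces:
\begin{itemize}
\item $K_i$, which needs at least $|S|+2(m+1)$ tetrahedra by a purely combinatorial count;
\item $K_{ii}$, which is then capped at $8n(m+1)+2m+4$ tetrahedra.
\end{itemize}
Even then, volume alone leaves a gap of order $16n+\epsilon m$. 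The contradiction comes from a rigidity count. Near-extremal volume forces the tetrahedra along almost all equilateral strips to share the vertex $o$, whereas faces coming from $\Gamma$ cannot be incident to $o$. This gives roughly $m/2-O(n)$ tetrahedra outside $\mathcal{T}^\star$, against at most $m/10+O(n)$.

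The second gap is the puncture itself. With $p$ in the interior of $\mathrm{P}$, every disk $\Pi_i$ passes through the same point above $p$. The region swept by the flips is therefore pinched there and is not a ball bounded by a sphere. The decomposition lemmas are only available when $X$ lies in the boundary. Your worry about non-ideal tetrahedra is misplaced, since $p$ can be sent to infinity like every other vertex, but the topological problem is real and your outline has no way around it.

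The paper sidesteps it entirely. Lemma~\ref{PW2.sec.4.lem.1.5} places $p$ so that some polygon $\mathrm{Q}$ from $\mathcal{Q}$, with $p$ as its unique reflex vertex, has its vertices in $X\cup\{p\}$. Fixing a triangulation $A$ of the rest of $\mathrm{P}$ identifies $\mathcal{F}(\mathrm{Q},\cdot)$ with a subgraph $\mathcal{G}$. Either $\mathcal{G}$ is strongly convex, so a bad geodesic in $\mathrm{Q}$ lifts, or some arc of $A$ is already a witness.

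Two smaller points. Your target, that every geodesic between $U$ and $V$ flips a prescribed $\varepsilon$, amounts to non-weak convexity. That is stronger than the theorem and than what the paper proves: it shows each geodesic from $T^-$ to $T^+$ repeats \emph{some} arc, and the witnessing segment is not pinned down. Also, exact flip distances in convex polygons are known only for special pairs, not in general.
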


We also provide a sufficient condition for this strong convexity property to fail for a prescribed punctured convex polygon. We say that a simple polygon $\mathrm{P}$ is \emph{balanced} with respect to a point $x$ when the number of vertices of $\mathrm{P}$ in any open half space bounded by a line through $x$ is never greater than $3/2$ times the smallest number of vertices in any such half-plane.

\begin{thm}\label{PW2.sec.1.thm.1}
Consider a convex polygon $\mathrm{P}$ with vertex set $X$. Assume that a puncture $p$ has been placed in $\mathrm{P}$ in such a way that $\mathrm{P}$ is balanced with respect to $p$. If in addition, $X$ is large enough, then $\mathcal{F}_\varepsilon(\mathrm{P},X\cup\{p\})$ is not a strongly convex subgraph of $\mathcal{F}(\mathrm{P},X\cup\{p\})$ for some line segment $\varepsilon$.
\end{thm}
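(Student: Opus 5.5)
The plan is to exhibit two triangulations $U$ and $V$ of $\mathrm{P}$ with vertex set $X\cup\{p\}$. Both will contain an arc $\varepsilon$ not incident to $p$, since by \cite[Theorem 4.3]{ParlierPournin2018b} arcs incident to the puncture cannot work. We will then show that some flip path from $U$ to $V$ that leaves $\mathcal{F}_\varepsilon(\mathrm{P},X\cup\{p\})$ is at least as short as every path that stays inside it.

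Concretely, let $n=|X|$. Choose $\varepsilon$ to be a line segment between two vertices of $\mathrm{P}$ passing close to $p$. It splits $\mathrm{P}$ into two sub-polygons, one containing $p$. The balance hypothesis guarantees that every line through $p$ has a linear fraction of the vertices on each side, so both sides of $\varepsilon$ will carry $\Theta(n)$ vertices.

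We will take $U$ and $V$ to be, on each side of $\varepsilon$, the \enquote{comb} or \enquote{zigzag} configurations known to be far apart. The difficult side is the one containing $p$. There, the distance inside $\mathcal{F}_\varepsilon$ splits as a sum of distances in two smaller flip-graphs, and that sum will be bounded below by the lower-bound technique hinted at in the abstract.

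For the lower bound we lift each triangulation to the boundary of a $3$-dimensional object: a flip path corresponds to a sequence of tetrahedra glued between $U$ and $V$. We will use a decomposition lemma for such $3$-dimensional triangulations that separates the tetrahedra attached near the puncture from the rest. We then embed the configuration in $\mathbb{H}^3$ with vertices at ideal points, and the puncture handled as in Sleator--Tarjan--Thurston. Each ideal tetrahedron has volume at most $v_3\approx1.0149$, so the number of flips is at least $\mathrm{Vol}/v_3$. To make this volume large we will choose $U$ and $V$ so that the ideal polyhedron they bound has volume close to its maximum, about $2v_3 n$. For the path staying in $\mathcal{F}_\varepsilon$, the argument gives a lower bound of roughly $(2-o(1))n$ minus a correction depending on how the vertices split around $p$.

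For the upper bound we construct an explicit path that first flips $\varepsilon$ away, using the puncture to reroute. A bounded number of extra flips should let us rotate around $p$, saving a linear number of flips compared with the two separated sub-problems. The target shortcut is roughly $a+b+O(1)$ flips against $2\min(a,b)$-type costs on each side, where $a$ and $b$ count the vertices on the two sides of lines through $p$. The constant $3/2$ in the definition of balanced should be exactly what makes the ratio of these counts favourable, so that the detour length is strictly smaller than the lower bound once $n$ is large.

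The main obstacle is the lower bound for paths confined to $\mathcal{F}_\varepsilon$. With a puncture present, the triangulations no longer correspond simply to ideal polygon triangulations. We must prove the decomposition lemma: any $3$-dimensional triangulation realizing such a path can be cut along the triangle spanned by $\varepsilon$ and $p$ into pieces, each behaving like an unpunctured polygon. Only after this cut does the hyperbolic volume bound apply, with careful estimates of the volumes of near-regular ideal polyhedra.
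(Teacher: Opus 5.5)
There is a genuine gap. Your outline names the right tools (blow-up triangulations, a decomposition lemma, ideal volume), but both halves of the comparison are missing, and the step you flag as the main obstacle would not give you what you need even if proved.

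\emph{Lower bound.} Since $\mathcal{F}_\varepsilon(\mathrm{P},X\cup\{p\})$ is a product, your lower bound amounts to a sharp lower bound on flip distance in the punctured side $\mathrm{P}_1$ of $\varepsilon$, and no tool for that exists here. Sleator--Tarjan--Thurston treat only unpunctured convex polygons. The decomposition lemma actually available, Lemma~\ref{PW2.sec.3.lem.5}, needs $X\subset\partial\mathrm{P}$, a geodesic, the three arcs of the circle present in $K$, and the pulling conditions. Nothing forces a triangulation along your paths to contain the triangle spanned by $\varepsilon$ and $p$. Even if it did, cutting that triangle off $\mathrm{P}_1$ leaves a polygon whose unique reflex vertex is $p$. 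That is exactly the class in which Theorem~\ref{PW2.sec.1.thm.2} shows strong convexity fails, so ``pieces behaving like unpunctured polygons'' is the hard part, not a consequence.

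Volume alone would not close the comparison either. In the paper's tailored configuration, Lemma~\ref{PW2.sec.6.lem.1} and Proposition~\ref{PW2.sec.6.lem.2} give only about $2(4n+1)(m-1)-\epsilon m$ tetrahedra. This is below the upper bound $8n(m+1)+2m+4$ of Proposition~\ref{PW2.sec.5.prop.1}, and the contradiction needs the extra counting of Lemmas~\ref{PW2.sec.6.lem.3}--\ref{PW2.sec.6.lem.5}.

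\emph{Upper bound.} The detour that ``rotates around $p$'' is never constructed, and the counts $a+b+O(1)$ versus $2\min(a,b)$ are not derived from anything. A linear saving would mean $\mathcal{F}_\varepsilon$ is not even weakly convex, which is strictly stronger than the theorem and is not proved in the paper. The role you assign to the constant $3/2$ is also a guess. In the paper it serves only in Lemma~\ref{PW2.sec.4.lem.1}, to put a positive fraction of the vertices in prescribed cones.

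The missing idea is to remove the puncture from the analysis and never compute a distance inside $\mathcal{F}_\varepsilon$. The paper proceeds as follows.
\begin{enumerate}
\item It cuts off a triangle incident to $p$ and an edge of $\mathrm{P}$, and uses balance (Lemma~\ref{PW2.sec.4.lem.1}) to find a subpolygon $\mathrm{Q}\in\mathcal{Q}$ with vertices in $X\cup\{p\}$ whose only reflex vertex is $p$.
\item It fixes arcs $A$ triangulating the rest of $\mathrm{P}$. Either the subgraph $\mathcal{G}\cong\mathcal{F}(\mathrm{Q},\cdot)$ of triangulations containing $A$ is not strongly convex, so some $\mathcal{F}_\varepsilon$ with $\varepsilon\in A$ is not (intersections of strongly convex subgraphs are strongly convex), or non-convexity inside $\mathrm{Q}$ transfers to $\mathrm{P}$.
\item In $\mathrm{Q}$ all points lie on the boundary, so Lemma~\ref{PW2.sec.3.lem.5} applies.
\item For specific $T^-$, $T^+$, it assumes a geodesic never flips an arc out and back in, i.e.\ no two arcs of $K$ share endpoints.
\item It forces the arc $a^nd$ via the large fan $S$ (Lemma~\ref{PW2.sec.5.lem.1}) and cuts $K$ along the triangle $a^na^{n+1}d$ (Lemma~\ref{PW2.sec.5.lem.2}).
\item It reaches a contradiction on $K_{ii}$ by volume, plus the observation that tetrahedra on the equilateral strips must share the vertex $o=p$ while faces coming from $\Gamma$ cannot.
\end{enumerate}
The witness $\varepsilon$ is whatever arc a geodesic flips out and back in. It is never exhibited, and no distance inside $\mathcal{F}_\varepsilon$ is ever estimated.
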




Theorem~\ref{PW2.sec.1.thm.1} will be established as a consequence of the following similar statement in the case of a simple polygon with a unique reflex vertex. 

\begin{thm}\label{PW2.sec.1.thm.2}
Consider a simple non-convex polygon $\mathrm{P}$ with vertex set $X$. Assume that $\mathrm{P}$ admits a single reflex vertex and that it is balanced with respect to that vertex. If in addition, $X$ is large enough, then $\mathcal{F}_\varepsilon(\mathrm{P},X)$ is not a strongly convex subgraph of $\mathcal{F}(\mathrm{P},X)$ for some line segment $\varepsilon$.
\end{thm}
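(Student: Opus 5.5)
We will exhibit an explicit pair of triangulations $U$ and $V$ of $\mathrm{P}$ that both contain a well-chosen arc $\varepsilon$ incident to the reflex vertex $x$. We will then show that every path between $U$ and $V$ inside $\mathcal{F}_\varepsilon(\mathrm{P},X)$ is strictly longer than some path that leaves $\mathcal{F}_\varepsilon(\mathrm{P},X)$. The natural choice is to let $\varepsilon$ cut $\mathrm{P}$ into two convex pieces $\mathrm{P}_1$ and $\mathrm{P}_2$. This is possible because $\mathrm{P}$ has a single reflex vertex. Take $\varepsilon$ to be a segment from $x$ to a vertex $y$ such that the two sides of the line through $x$ and $y$ each contain roughly half of the vertices; the balance hypothesis guarantees that such a split exists with the two pieces of comparable sizes. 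Then $\mathcal{F}_\varepsilon(\mathrm{P},X)$ is the product $\mathcal{F}(\mathrm{P}_1)\times\mathcal{F}(\mathrm{P}_2)$ of flip-graphs of convex polygons. Its distances are therefore sums of flip distances in convex polygons, and these are controlled by the rotation distance. In $\mathrm{P}_i$ we pick $U_i$ and $V_i$ at distance close to the maximum $2n_i-10$, for instance via the known explicit pairs realizing that diameter.

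For the upper bound on the unconstrained distance, we build an explicit shorter path that first flips $\varepsilon$ away. Once $\varepsilon$ is removed, the reflex vertex no longer forces both pieces to be triangulated independently. Arcs can then cross the former position of $\varepsilon$ and serve both sides simultaneously, for example arcs from $x$ fanning across, or a common "hub" vertex on the far side. We would design $U$ and $V$ so that both triangulations are close to a triangulation that uses arcs crossing $\varepsilon$. The target length is roughly $\tfrac32$ of a convex diameter rather than the sum $2n_1+2n_2$. The constant $3/2$ in the definition of balance suggests this is exactly the comparison that makes the counting work.

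The main obstacle is the lower bound. We must show that the distance inside $\mathcal{F}_\varepsilon(\mathrm{P},X)$, that is $d(U_1,V_1)+d(U_2,V_2)$, is genuinely large. Following the abstract, we would do this by lifting a flip path to a $3$-dimensional triangulation: each flip is a tetrahedron glued onto the triangulated surface. The key steps are as follows.
\begin{itemize}
\item Prove a decomposition lemma showing that a triangulation of the relevant $3$-ball coming from a path within $\mathcal{F}_\varepsilon$ splits along the disk spanned by $\varepsilon$ into triangulations over $\mathrm{P}_1$ and $\mathrm{P}_2$.
\item Realize each piece in $\mathbb{H}^3$ with the polygon vertices on the sphere at infinity, positioned so that $U_i$ and $V_i$ bound a polyhedron of large hyperbolic volume, close to $(2n_i-c)$ times the regular ideal tetrahedron volume $v_3$.
\item Since every tetrahedron has volume at most $v_3$, conclude a lower bound on the number of flips, in the spirit of Sleator--Tarjan--Thurston.
\end{itemize}
Comparing this lower bound with the explicit path from the previous step, and taking $|X|$ large enough that the additive constants are absorbed, will show that some geodesic leaves $\mathcal{F}_\varepsilon(\mathrm{P},X)$.

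Getting volume estimates that are sharp enough on both pieces simultaneously, under only the balance hypothesis, is where we expect most of the work to lie.
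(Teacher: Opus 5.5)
\textbf{The approach fails at its first step.} For the arc you choose, $\mathcal{F}_\varepsilon(\mathrm{P},X)$ \emph{is} strongly convex, so no pair $U,V$ can work.

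Write $\varepsilon=xy$. Let $\mathrm{P}_1$ have vertices $x,p_1,\ldots,p_a,y$ and $\mathrm{P}_2$ have vertices $y,q_1,\ldots,q_b,x$, both convex as you require. Every flip in $\mathrm{P}$ is also a flip in the convex polygon with the same cyclic order of vertices, so the usual boundary-edge contractions apply.
\begin{itemize}
\item Merging $q_1,\ldots,q_b$ into $x$ gives a map $\phi_1$ sending flips to flips or to identities. It takes values in the flip-graph of an abstract $(a+2)$-gon, which is $\mathcal{F}(\mathrm{P}_1)$ precisely because $\mathrm{P}_1$ is convex.
\item A flip becomes an identity under $\phi_1$ as soon as two vertices of its quadrilateral lie in $\{q_1,\ldots,q_b,x\}$.
\item Define $\phi_2$ symmetrically by merging $p_1,\ldots,p_a$ into $y$.
\end{itemize}
A quadrilateral has only four vertices, so no flip survives under both maps. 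A flip removing or introducing $\varepsilon$ has quadrilateral $xp_iyq_j$, so it dies under both. Since $\phi_i(U)=U_i$ and $\phi_i(V)=V_i$, every path from $U$ to $V$ has length at least $d(U_1,V_1)+d(U_2,V_2)$ plus the number of its flips involving $\varepsilon$.

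Hence every geodesic between $U$ and $V$ stays in $\mathcal{F}_\varepsilon(\mathrm{P},X)$. Your shortcut of length about $\tfrac32$ of a convex diameter cannot exist, and neither can any geodesic that flips $\varepsilon$ away. This is the reflex-vertex counterpart of the strong convexity, recalled in the introduction, for arcs incident to a puncture. Note also that you are aiming at a \emph{strictly} shorter path, i.e.\ failure of weak convexity. That is more than the theorem asserts and more than the paper proves.

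\textbf{What is needed instead.} The arc must cut off a non-convex piece, and the lower bound must be proved for geodesics of the whole flip-graph, not inside $\mathcal{F}_\varepsilon$. Inside $\mathcal{F}_\varepsilon$, distances are just sums of convex flip distances, so your first bullet is beside the point. The paper proceeds as follows.
\begin{itemize}
\item The balance hypothesis is used in Lemma~\ref{PW2.sec.4.lem.1}, not to split $\mathrm{P}$ into comparable halves, but to find a subpolygon from $\mathcal{Q}$: enough vertices in four cones cut out by lines through the reflex vertex. The result then transfers to $\mathrm{P}$ by fixing a triangulation of the complement.
\item There, $T^-$ and $T^+$ share the arc $b^0d$, which avoids the reflex vertex $o$ and cuts off a non-convex piece. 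The explicit path of Lemma~\ref{PW2.sec.4.lem.3} flips that arc.
\item One then assumes the blow-up triangulation $K$ of a geodesic never has two arcs with the same endpoints. The generalized decomposition lemma (Lemma~\ref{PW2.sec.3.lem.5}) cuts $K$ along a circle over the triangle $a^na^{n+1}d$, and the boundary of the piece $K_{ii}$ is embedded as an ideal polytope $\Xi$.
\end{itemize}
Even then the volume bound alone does not suffice. The volume of $\Xi$ and the tetrahedron count of $K_{ii}$ agree to leading order, both about $8nm+2m$. The contradiction comes from the finer counting of Lemmas~\ref{PW2.sec.6.lem.3}--\ref{PW2.sec.6.lem.5}: the tetrahedra attached to most equilateral strips must all contain $o$, which no triangle of $\Gamma$ does. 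This is why $m/n$ must be large, rather than merely letting $|X|$ absorb additive constants.
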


\begin{figure}[b]
\begin{centering}
\includegraphics[scale=1]{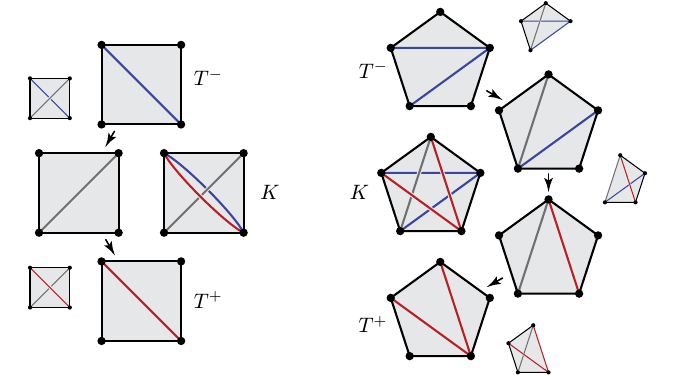}
\caption{Two sequences of flips from $T^-$ to $T^+$, the tetrahedra arising from each flip, and the corresponding blow-up triangulations $K$. On the left, some arcs are bent for clarity.}\label{PW2.sec.1.fig.1}
\end{centering}
\end{figure}

Our proof of Theorems \ref{PW2.sec.1.thm.0}, \ref{PW2.sec.1.thm.1}, and \ref{PW2.sec.1.thm.2} has two main ingredients, both based on the notion of a \emph{blow-up triangulation} that we illustrate in Fig.~\ref{PW2.sec.1.fig.1}. Consider two triangulations $T^-$ and $T ^+$ with the same vertex set $X$, of a simple polygon $\mathrm{P}$ and whose common arcs are contained in the boundary of $\mathrm{P}$. Gluing $T^-$ and $T^+$ along their boundary results in a simplicial triangulated topological sphere and, as observed in~\cite{SleatorTarjanThurston1988}, a path from $T^-$ to $T^+$ in $\mathcal{F}(\mathrm{P},X)$ gives rise to a decomposition $K$ into tetrahedra of the topological ball bounded by that sphere. Each tetrahedron of $K$ corresponds to one of the flips in the considered path. Such a tetrahedral decomposition that arises from a path in $\mathcal{F}(\mathrm{P},X)$ is what we refer to as a blow-up triangulation of $\mathrm{P}$ following the terminology of~\cite{PourninWang2021}. 
It should be noted that $K$ is not always a simplicial complex as it may contain distinct arcs with the same two vertices or tetrahedra glued along a pair of triangular faces (for instance when a flip in the considered path is immediately followed by the inverse flip). However, the strong convexity result from~\cite{SleatorTarjanThurston1988} implies that, if $\mathrm{P}$ is a convex polygon that admits $X$ as its vertex set and $K$ is built from a geodesic path in $\mathcal{F}(\mathrm{P},X)$, then $K$ is a simplicial complex. In fact, it is further shown in \cite{PourninWang2021} that this simplicial complex is flag. This is one of the results from \cite{PourninWang2021} that are called \emph{decomposition lemmas} as they provide triangles along which a blow-up triangulation can be cut into smaller ones, thus making it possible to compute distances in $\mathcal{F}(\mathrm{P},X)$ between triangulation pairs for which that computation was previously out of reach.

The first ingredient in the proof of Theorems \ref{PW2.sec.1.thm.0}, \ref{PW2.sec.1.thm.1}, and \ref{PW2.sec.1.thm.2} is an extension of the decomposition lemmas from~\cite{PourninWang2021} under the least possible restrictive hypotheses to when $\mathrm{P}$ is a simple (non-necessarily convex) polygon and $X$ may contain flat vertices. This extension will not require any new technology beyond what is done in~\cite{PourninWang2021} and we state it as Lemma \ref{PW2.sec.3.lem.5} in Section \ref{PW2.sec.3}. The second ingredient consists in embedding a family of triangulated topological spheres as ideal polytopes in the $3$\nobreakdash-dimensional hyperbolic space $\mathbb{H}^3$ and then lower bounding the number of ideal tetrahedra required to triangulate these polytopes. The lower bound is obtained via the hyperbolic volume argument pioneered (to the best of our knowledge) in \cite{SleatorTarjanThurston1988} and later used in \cite{Smith2000}.

We recall what blow-up triangulations are in Section \ref{PW2.sec.2} and state our generalized decomposition lemma in Section \ref{PW2.sec.3}. The proofs of Theorems \ref{PW2.sec.1.thm.0}, \ref{PW2.sec.1.thm.1}, and~\ref{PW2.sec.1.thm.2} span Sections \ref{PW2.sec.4} to \ref{PW2.sec.6}. In Section \ref{PW2.sec.4}, we explain how these three theorems can be reduced to just one statement and we describe the families of triangulation pairs that we use in Sections \ref{PW2.sec.5} and \ref{PW2.sec.6} to prove that statement. In Section \ref{PW2.sec.5} we apply our decomposition lemma to blow-up triangulations that arise from geodesic paths between these triangulation pairs in order to cut them in smaller pieces. Finally, in Section \ref{PW2.sec.6}, we embed one of these pieces into the $3$-dimensional hyperbolic space $\mathbb{H}^3$ and use a hyperbolic volume argument to complete the proof of Theorems \ref{PW2.sec.1.thm.0}, \ref{PW2.sec.1.thm.1}, and~\ref{PW2.sec.1.thm.2}.

\section{Blow-up triangulations of a simple polygon}\label{PW2.sec.2}

By a simple polygon, we mean a compact subset $\mathrm{P}$ of $\mathbb{R}^2$ whose topological boundary is a Jordan curve obtained as the union of finitely-many line segments with pairwise disjoint relative interiors. We refer to these line segments as the edges of $\mathrm{P}$ and to their endpoints as the vertices of $\mathrm{P}$. We say that two line segments are \emph{crossing} when their relative interiors are non-disjoint. Given a finite subset $X$ of $\mathrm{P}$ that contains all the vertices of $\mathrm{P}$, a triangulation of $\mathrm{P}$ with vertex set $X$ is an inclusion-wise maximal set $T$ of pairwise non-crossing line segments whose two endpoints belong to $X$. We shall refer to the line segments in $T$ as its \emph{boundary arcs} when they are contained in the boundary of $\mathrm{P}$ and as its \emph{interior arcs} otherwise. Note that by the maximality of $T$, the boundary of $\mathrm{P}$ is the union of the boundary arcs of $T$. Cutting $\mathrm{P}$ along the arcs of $T$ results in a collection of triangles which we think of as the triangles of $T$. Two triangulations of a simple polygon are depicted in Fig. \ref{PW2.sec.2.fig.1}.

\begin{figure}[b]
\begin{centering}
\includegraphics[scale=1]{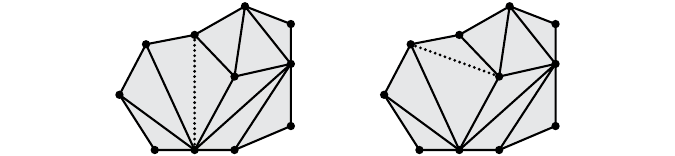}
\caption{Two triangulations of an octagon with one reflex vertex, two flat vertices, and one puncture.}\label{PW2.sec.2.fig.1}
\end{centering}
\end{figure}

Denote by $\mathcal{F}(\mathrm{P},X)$ the graph whose vertices are the triangulations of $\mathrm{P}$ with vertex set $X$ and whose edges connect two triangulations that differ by a single arc. In other words, when two triangulations $T$ and $T'$ are the endpoints of an edge of $\mathcal{F}(\mathrm{P},X)$, there exists an interior arc $\varepsilon$ of $T$ such that gluing the two triangles of $T$ incident to $\varepsilon$ results in a convex quadrilateral and replacing $\varepsilon$ in $T$ by the other diagonal of that quadrilateral produces $T'$. That operation is called a \emph{flip}. For instance, the two triangulations in Fig.~\ref{PW2.sec.2.fig.1} are related the flip that exchanges the dotted arcs. In the remainder of the article, we think of any path in $\mathcal{F}(\mathrm{P},X)$ as a sequence $(T_i)_{0\leq{i}\leq{k}}$ of triangulations of $\mathrm{P}$ with vertex set $X$ such that $T_{i-1}$ can be transformed into $T_i$ by a flip for all $i$.

Now let us now consider such a path $(T_i)_{0\leq{i}\leq{k}}$ in $\mathcal{F}(\mathrm{P},X)$ and explain how that path can be turned into a three dimensional object following the ideas of~\cite{SleatorTarjanThurston1988}. From now on, $\mathbb{R}^2$ will be identified with the plane spanned by the first two coordinates of $\mathbb{R}^3$ and $\pi$ will denote the orthogonal projection from $\mathbb{R}^3$ on $\mathbb{R}^2$. The third coordinate of a point $x$ from $\mathbb{R}^3$ will be denoted by $x_3$. Let us first define the following relation between subsets of $\mathbb{R}^3$.

\begin{defn}\label{PW2.sec.2.defn.1}
Consider two subsets $\sigma$ and $\tau$ of $\mathbb{R}^3$ whose images by $\pi$ are non-disjoint. We say that $\sigma$ is \emph{below} $\tau$ (and that $\tau$ is \emph{above} $\sigma$) when, for any pair of points $x$ in $\sigma$ and $y$ in $\tau$ such that $\pi(x)$ and $\pi(y)$ coincide, $x_3$ is at most $y_3$ and for at least one such pair of points, the inequality is strict.
\end{defn}

Consider a topological disk $\Pi_0$ embedded in $\mathbb{R}^3$ such that $\pi$ induces an homeomorphism $\Pi_0\rightarrow\mathrm{P}$ and the boundary of $\Pi_0$ coincides with that of $\mathrm{P}$. In other words, $\Pi_0$ is obtained as a continuous vertical lift of the interior of $\mathrm{P}$. Now, assuming that $k$ is at least $1$, consider the quadrilateral $\Theta_1$ in $\mathbb{R}^2$ whose diagonals are exchanged by the flip that transforms $T_0$ into $T_1$. Denote by $\Theta_1^-$ the portion of $\Pi_0$ whose image by $\pi$ is $\Theta_1$. Topologically, $\Theta_1^-$ is a disk and in order to model the flip between $T_0$ and $T_1$ we will glue another disk $\Theta_1^+$ along the boundary of $\Theta_1^-$ whose interior lies above that of $\Theta_1^-$. More precisely $\Theta_1^+$ is a disk such that $\pi$ induces an homeomorphism $\Theta_1^+\rightarrow\Theta_1$ while the relative boundaries of $\Theta_1^+$ and $\Theta_1^-$ coincide and for any point $x$ in the relative interior of $\Theta_1^+$, the third coordinate of $x$ is greater than the third coordinate of the point in $\Theta_1^-$ whose image by $\pi$ coincides with that of $x$. By construction,
$$
\Pi_1=\bigl[\Pi_0\mathord{\setminus}\Theta_1^-\bigr]\cup\Theta_1^+
$$
is a topological disk such that $\pi$ induces an homeomorphism $\Pi_1\rightarrow\mathrm{P}$. Moreover, $\Theta_1^-\cup\Theta_1^+$ is a topological sphere. We shall denote by $\psi(1)$ the closed topological ball bounded by that sphere. Repeating this construction inductively from $\Pi_1$ and $T_1$ instead of $\Pi_0$ and $T_0$ allows to build three sequences of topological disks $(\Pi_i)_{0\leq{i}\leq{k}}$ and $(\Theta_i^\pm)_{0\leq{i}\leq{k}}$ such that $\pi$ induces homeomorphisms $\Pi_i\rightarrow\mathrm{P}$ and $\Theta_i^\pm\rightarrow\Theta_i$ where $\Theta_i$ denotes the quadrilateral whose diagonals are exchanged by the flip between $T_{i-1}$ and $T_i$. Moreover $\Theta_i^-$ is below $\Theta_i ^+$ in the sense of Definition~\ref{PW2.sec.2.defn.1} and the union of $\Theta_i^-$ with $\Theta_i^+$ is a topological sphere that bounds a closed topological ball that we will denote by $\psi(i)$.

We can decompose $\Pi_i$ in the same way that $T_i$ decomposes $\mathrm{P}$ using the disks, arcs, and points in $\Pi_i$ whose images by $\pi$ are the triangles, edges, and vertices of $T_i$. Consider the set $K$ made of all these disks, arcs and triangles when $i$ ranges from $0$ to $k$, together with the $3$-dimensional balls $\psi(i)$ when $i$ ranges from $1$ to $k$. We will refer to the disks as triangles and to the $3$-dimensional balls as tetrahedra. Indeed, each of these triangles is bounded by three arcs of $K$ and the points, arcs, and triangles in the boundary of each of these tetrahedra form a simplicial complex isomorphic to the boundary of a Euclidean tetrahedron. We will also refer to the elements of $K$ as its \emph{faces}.

As one can see on the left of Fig.~\ref{PW2.sec.1.fig.1}, $K$ is not necessarily a simplicial complex. Still, it decomposes the union of its faces into tetrahedra in a natural way. We will refer to $K$ as a \emph{blow-up triangulation} of $\mathrm{P}$ following the terminology from~\cite{PourninWang2021} as it decomposes a portion of $\mathbb{R}^3$ obtained by blowing up $\mathrm{P}$ vertically. While there are infinitely many ways to place the surfaces $\Pi_i$ in $\mathbb{R}^3$, all of them are identical up to the homeomorphisms of $\mathbb{R}^3$ that fix the boundary of $\mathrm{P}$ pointwise while sending every point in $\mathbb{R}^3$ to a point with the same two first coordinates and preserving the order of the third coordinate of distinct points with the same two first coordinates. We will consider blow-up triangulations of $\mathrm{P}$ up to these homeomorphisms and therefore, each path in $\mathcal{F}(\mathrm{P},X)$ will correspond with a unique blow-up triangulation of $\mathrm{P}$. However as noted in \cite{PourninWang2021}, a given blow-up triangulation of $\mathrm{P}$ may correspond to several different paths. We sum up the construction of blow-up triangulations as follows.

\begin{prop}\label{PW2.sec.2.prop.1}
Consider a path $(T_i)_{0\leq{i}\leq{k}}$ in $\mathcal{F}(\mathrm{P},X)$ and the blow-up triangulation $K$ of $\mathrm{P}$ that corresponds to that path. There exists a bijection $\psi$ from $\{1,\ldots,k\}$ to the set of the tetrahedra in $K$ and a family $(\Pi_i)_{0\leq{i}\leq{k}}$ of topological disks embedded within $\mathbb{R}^3$ such that
\begin{enumerate}
\item[(i)] $\pi$ induces homeomorphisms $\Pi_i\rightarrow\mathrm{P}$, 
\item[(ii)] if $i$ is less than $j$, then $\Pi_i$ is below $\psi(j)$ and $\Pi_j$,
\item[(iii)] if $\sigma$ is an arc or a triangle of $T_i$, then $\pi^{-1}(\sigma)\cap\Pi_i$ belongs to $K$, and
\item[(iv)] the image of $\psi(i)$ by $\pi$ is the quadrilateral whose diagonals are exchanged by the flip between $T_{i-1}$ and $T_i$.
\end{enumerate}
\end{prop}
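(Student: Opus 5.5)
The proposition essentially restates the inductive construction above, so the plan is to check each property by induction on $i$, using the surfaces $\Pi_i$, the disks $\Theta_i^\pm$ and the balls $\psi(i)$ built in this section. Define $\psi$ as the map $i\mapsto\psi(i)$. It is onto the set of tetrahedra of $K$ because, by definition, the only tetrahedra of $K$ are the balls $\psi(i)$. It is injective because $\psi(i)$ and $\psi(j)$ for $i<j$ have disjoint interiors: this will follow from (ii), since $\psi(i)$ lies between $\Pi_{i-1}$ and $\Pi_i$, and $\psi(j)$ lies on or above $\Pi_{j-1}$.

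Property (i) holds by construction for $\Pi_0$. Assume it holds for $\Pi_{i-1}$. Then $\Pi_i$ is obtained by replacing $\Theta_i^-$ with $\Theta_i^+$. These two disks share their relative boundary and both project homeomorphically onto $\Theta_i$. Pasting the resulting two homeomorphisms with the one on $\Pi_{i-1}\setminus\Theta_i^-$ gives a bijective continuous map from $\Pi_i$ onto $\mathrm{P}$, which is a homeomorphism since $\Pi_i$ is compact. Property (iv) is immediate: $\psi(i)$ is bounded by $\Theta_i^-\cup\Theta_i^+$, and it projects onto $\Theta_i$ because both disks do.

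For (iii), I will choose the lift inductively. Since $\Theta_i$ is the union of two triangles of $T_{i-1}$, and also of two triangles of $T_i$, the disk $\Theta_i^+$ is cut by the lift of the new diagonal into two triangles. Outside $\Theta_i$, the triangles and arcs of $T_i$ are those of $T_{i-1}$, and their lifts are unchanged because $\Pi_i$ agrees with $\Pi_{i-1}$ there. All these pieces are placed in $K$ by definition, so $\pi^{-1}(\sigma)\cap\Pi_i$ is exactly one of the faces recorded in $K$.

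The main work is (ii). First, $\Pi_{j-1}$ is weakly below $\Pi_j$, and strictly above the interior of $\Theta_j$. Moreover, $\psi(j)$ lies vertically between them. Here one must use that the region bounded by $\Theta_j^-\cup\Theta_j^+$ is exactly the set of points whose projection lies in $\Theta_j$ and whose height lies between the two disks; this holds because both disks are graphs over $\Theta_j$. Chaining these inequalities gives $\Pi_i$ weakly below $\Pi_{j-1}$ for $i\le j-1$, and therefore weakly below both $\psi(j)$ and $\Pi_j$.

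The delicate point is the strict part of Definition \ref{PW2.sec.2.defn.1}: one needs at least one point pair with a strict inequality. For $\Pi_j$ this is clear, since the graphs are continuous functions over $\mathrm{P}$ and strictly increase over the interior of $\Theta_j$. For $\psi(j)$, take a point $x$ of $\Theta_j^+$ above an interior point of $\Theta_j$. Its height strictly exceeds that of $\Theta_j^-=\Pi_{j-1}\cap\pi^{-1}(\Theta_j)$ at the same projection, which in turn is at least the height of $\Pi_i$ there. Writing the surfaces as graphs of functions $h_i$ makes all these comparisons routine, so I expect no real obstacle beyond careful bookkeeping.
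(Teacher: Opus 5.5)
Your proposal is correct and follows the paper's approach. The paper gives no separate proof and presents the proposition as a summary of the inductive construction of $\Pi_i$, $\Theta_i^\pm$ and $\psi(i)$. Your induction, writing each $\Pi_i$ as the graph of a height function $h_i$ with $h_{i-1}\leq h_i$ everywhere and strict inequality over the interior of $\Theta_i$, supplies the routine bookkeeping that the paper leaves implicit.
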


It immediately follows from Proposition \ref{PW2.sec.2.prop.1} that restricting of aboveness relationship from Definition \ref{PW2.sec.2.defn.1} to the faces of a blow-up triangulation and then extending this restriction by transitivity results in a partial order.

\begin{prop}\label{PW2.sec.2.prop.2}
The transitive closure of the aboveness relationship is a partial order on any blow-up triangulation of $\mathrm{P}$.
\end{prop}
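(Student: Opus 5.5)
The plan is to read off a height function from Proposition~\ref{PW2.sec.2.prop.1} and show that the aboveness relation strictly increases it. A strictly increasing map into a totally ordered set forbids cycles, so the transitive closure will be antisymmetric and irreflexive, hence a (strict) partial order.

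Transitivity holds by construction, so antisymmetry, or equivalently acyclicity, is the only issue. First, attach to every face $\sigma$ of $K$ an index interval. For a tetrahedron $\psi(i)$, assign the index $i$. For a lower-dimensional face $\sigma$, let $I(\sigma)$ be the set of indices $i$ such that $\sigma\subset\Pi_i$, that is, $\sigma=\pi^{-1}(\pi(\sigma))\cap\Pi_i$ is a face of $T_i$ lifted. I would first check that $I(\sigma)$ is an interval $[a_\sigma,b_\sigma]$. This is because the surfaces only change inside $\Theta_i^\pm$: once $\sigma$ is removed from $\Pi_i$, it lies strictly below the later surfaces and never reappears. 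That last fact uses item (ii) of Proposition~\ref{PW2.sec.2.prop.1}.

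Next, I would show that if $\sigma$ is below $\tau$, then the intervals are ordered in a weak sense, with strictness somewhere. Take points $x\in\sigma$ and $y\in\tau$ with $\pi(x)=\pi(y)$ and $x_3<y_3$. Suppose $y$ lies on a surface $\Pi_j$. By item (ii), and because each $\Pi_i$ is a graph over $\mathrm{P}$ by item (i), the heights of the surfaces above a fixed point of $\mathrm{P}$ are nondecreasing in $i$. Hence $x_3<y_3$ forces every surface containing $x$ to have an index smaller than some surface containing $y$. The same holds for tetrahedra, since $\psi(j)$ is sandwiched between $\Pi_{j-1}$ and $\Pi_j$.

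From this I would define the key $h(\sigma)=a_\sigma+b_\sigma$ for lower faces and $h(\psi(i))=2i-1$. The goal is to show that $\sigma$ below $\tau$ implies $h(\sigma)<h(\tau)$. A cycle $\sigma_1<\sigma_2<\dots<\sigma_1$ would then give $h(\sigma_1)<h(\sigma_1)$, a contradiction.

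The main obstacle is the inequality $h(\sigma)<h(\tau)$ in the degenerate situations where $\sigma$ and $\tau$ share portions of a surface. For example, $\sigma$ and $\tau$ may overlap on a common boundary arc, and heights agree there. In that case, I would compare at a point where the inequality is strict and argue with the monotone family of heights $i\mapsto(\Pi_i)_3$ over that point. Both $a$ and $b$ then compare weakly, and one compares strictly. If the key cannot be made to work in some configuration, the fallback is a direct argument. A cycle of aboveness would give points projecting into a common region with strictly increasing then returning heights, and that is impossible along the monotone stack of graphs $\Pi_0,\dots,\Pi_k$.
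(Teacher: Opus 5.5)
Your proposal is correct and follows essentially the paper's route: the paper only asserts that the partial order follows immediately from the monotone stacking in Proposition~\ref{PW2.sec.2.prop.1}(ii), and your rank function, with $h(\psi(i))=2i-1$ and $h(\sigma)=a_\sigma+b_\sigma$, carries that out explicitly. The ``degenerate'' cases you worry about are no obstacle: a single strict pair over one point $p$, together with the pointwise monotonicity of the heights of $\Pi_0,\dots,\Pi_k$ and the fact that $\psi(j)$ lies between $\Pi_{j-1}$ and $\Pi_j$, already gives $b_\sigma<a_\tau$, $b_\sigma\leq j-1$, $a_\tau\geq j$, or $i<j$ (using $\psi(i)\neq\psi(j)$) in the four possible cases, so you can drop the fallback argument, which as phrased would not work anyway because the faces in a cycle need not project over a common point.
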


\section{A generalized decomposition lemma}\label{PW2.sec.3}

In this section, we consider a path $(T_i)_{0\leq{i}\leq{k}}$ in $\mathcal{F}(\mathrm{P},X)$ and the associated blow-up triangulation $K$ of $\mathrm{P}$,  where $X$ does not contain any puncture of $\mathrm{P}$. We will denote by $\Pi_0$ to $\Pi_k$ the topological disks provided by Proposition \ref{PW2.sec.2.prop.1}, that are obtained when building $K$ from $(T_i)_{0\leq{i}\leq{k}}$. In particular $\pi$ induces homeomorphisms $\Pi_i\rightarrow\mathrm{P}$. We also denote by $\psi(i)$ the tetrahedron in $K$ that corresponds to the flip between $T_{i-1}$ and $T_i$.

The decomposition lemmas and the flagness property established in \cite{PourninWang2021} deal with a topological circle $C$ obtained as the union of three arcs $\alpha$, $\beta$, and $\gamma$ of $K$. More precisely, they state that under certain conditions, $K$ must contain a triangle bounded by $C$ or whose image by $\pi$ admits $\pi(C)$ as its boundary. In order to state our generalization of these decomposition lemmas, we first recall some terminology from \cite{PourninWang2021}. From now on $\alpha$, $\beta$, $\gamma$, and $C$ are fixed. We also assume that $\pi(\alpha)$ is the arc removed by the flip that transform $T_0$ into $T_1$ and that $\pi(\gamma)$ is the arc introduced by the flip that transforms $T_{k-1}$ into $T_k$. We will see when proving our generalized decomposition lemma that this assumption can be relaxed, but it will allow for a simpler exposition. We will denote by $a$, $b$ and $c$ the points of $X$ contained in $C$ with the convention that $a$ does not belong to $\alpha$, $b$ does not belong to $\beta$, and $c$ does not belong to $\gamma$.

\begin{defn}\label{PW2.sec.3.defn.1}
We say that an arc $\varepsilon$ of $K$ \emph{penetrates} $C$ when $\varepsilon$ is both below and above $C$ or, equivalently when it is below one of the arcs $\alpha$, $\beta$, or $\gamma$ and above another one of these arcs. We also say that a tetrahedron of $K$ penetrates $C$ when it is incident to an arc of $K$ that penetrates $C$. 
\end{defn}

The set of all the tetrahedra of $K$ that penetrate $C$ will be denoted by $\mathcal{P}$ in the sequel. We shall consider two other subsets of the tetrahedra of $K$ in order to state our generalized decomposition lemma. Observe that, when $\pi(\alpha)$, $\pi(\beta)$, and $\pi(\gamma)$ do not belong to the boundary of $\mathrm{P}$, cutting that polygon along these arcs results in the triangle bounded by $\pi(C)$ and in three additional simple polygons. We will denote by $\mathrm{P}^\alpha$, $\mathrm{P}^\beta$, and $\mathrm{P}^\gamma$ these three polygons with the convention that $\mathrm{P}^\varepsilon$ is incident to $\pi(\varepsilon)$ as shown in Fig. \ref{PW2.sec.3.fig.1}. When $\pi(\varepsilon)$ is contained in the boundary of $\mathrm{P}$, we will think of $\mathrm{P}^\varepsilon$ as just the arc $\pi(\varepsilon)$ and whenever we need a triangulation of $\mathrm{P}^\varepsilon$ in that case, we mean the singleton $\{\varepsilon\}$ (which is consistent with what a $1$-dimensional triangulation is).

\begin{figure}
\begin{centering}
\includegraphics[scale=1]{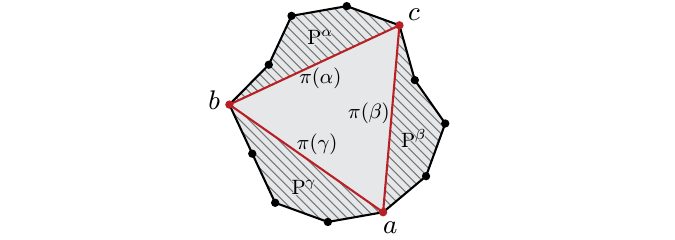}
\caption{$\mathrm{P}^\alpha$, $\mathrm{P}^\beta$, and $\mathrm{P}^\gamma$ (striped).}\label{PW2.sec.3.fig.1}
\end{centering}
\end{figure}

Let $q$ be an index such that $\beta$ is contained in $\Pi_q$. There may be several such indices but we fix one of them from now on. Denote
$$
\Pi^\varepsilon=\pi^{-1}(\mathrm{P}^\varepsilon)\cap\Pi_i
$$
where $\varepsilon$ is equal to $\alpha$, $\beta$, or $\gamma$, and the index $i$ is taken equal to  $0$, to $q$, or to $k$ depending on whether $\varepsilon$ is equal to $\alpha$, $\beta$, or $\gamma$, respectively.

We shall now consider two additional subsets of the tetrahedra of $K$.

\begin{defn}\label{PW2.sec.3.defn.2}
A tetrahedron $\sigma$ from $K$ is called a \emph{lower tetrahedron} of $K$ when it is incident to a triangle $\tau$ contained $K$ such that
\begin{itemize}
\item[(i)] $\tau$ is not above $\Pi^\beta$,
\item[(ii)] $\tau$ does not have a vertex in $\mathrm{P}^\alpha\mathord{\setminus}\{b\}$, and
\item[(iii)] if $\tau$ is contained in $\Pi^\beta$, then $\Pi^\beta$ is above $\sigma$,
\end{itemize}
and an \emph{upper tetrahedron} of $K$ when $\sigma$ is incident to a triangle $\tau$ of $K$ such that (i), (ii), and (iii) hold when ``above'' is replaced by ``below'' and $\alpha$ by $\gamma$.
\end{defn}

Let us denote by $\mathcal{L}$ and $\mathcal{U}$ the sets of the lower and upper tetrahedra of $K$. The following is proven in \cite{PourninWang2021} (see Lemma 5.7 therein) in the case when $\mathrm{P}$ is convex and $X$ is its vertex set but as mentioned at the end of \cite[Section 5]{PourninWang2021}, the proof still holds when $\mathrm{P}$ is non-convex and $X$ contains flat vertices.

\begin{lem}\label{PW2.sec.3.lem.2}
$\mathcal{L}$ and $\mathcal{U}$ are disjoint.
\end{lem}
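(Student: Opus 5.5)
We argue by contradiction: assume some tetrahedron $\sigma=\psi(i)$ of $K$ is both lower and upper. Then $\sigma$ is incident to a triangle $\tau$ satisfying (i)--(iii) of Definition~\ref{PW2.sec.3.defn.2}, and to a triangle $\tau'$ satisfying the dual conditions, with ``below'' in place of ``above'' and $\gamma$ in place of $\alpha$. We will compare the positions of $\tau$ and $\tau'$ relative to the disk $\Pi^\beta$, using the partial order of Proposition~\ref{PW2.sec.2.prop.2}.

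The first step is to locate $\sigma$ relative to $\Pi^\beta$ through its projection. Since $\Pi^\beta\subset\Pi_q$, Proposition~\ref{PW2.sec.2.prop.1}(ii) shows that $\sigma$ is above $\Pi_q$ when $i>q$ and below $\Pi_q$ when $i\le q$. Hence whenever $\pi(\sigma)$ meets the interior of $\mathrm{P}^\beta$, $\sigma$ is strictly above or strictly below $\Pi^\beta$. Suppose $\sigma$ is above $\Pi^\beta$. By (i), $\tau$ is not above $\Pi^\beta$. If $\pi(\tau)$ overlaps $\mathrm{P}^\beta$, then $\tau$, being a face of $\sigma$, lies on or above $\Pi_q$ over that overlap, which forces $\tau\subset\Pi^\beta$. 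Condition (iii) then says $\Pi^\beta$ is above $\sigma$, and together with $\sigma$ above $\Pi^\beta$ this contradicts antisymmetry of the partial order. The symmetric argument with $\tau'$ rules out $\sigma$ below $\Pi^\beta$. So either $\pi(\tau)$, respectively $\pi(\tau')$, avoids the interior of $\mathrm{P}^\beta$, or $\pi(\sigma)$ does.

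The remaining case is the one where the projections of $\sigma$, $\tau$, and $\tau'$ essentially miss the interior of $\mathrm{P}^\beta$. Here we use the vertex conditions (ii). $\tau$ has no vertex in $\mathrm{P}^\alpha\setminus\{b\}$, and $\tau'$ has no vertex in $\mathrm{P}^\gamma\setminus\{b\}$. The tetrahedron $\sigma$ has four vertices, and every one of them is a vertex of $\tau$ or of $\tau'$. So the vertices of $\sigma$ lie in $\mathrm{P}^\beta\cup\{a,b,c\}$, with the vertices coming from $\tau$ kept out of $\mathrm{P}^\alpha$ and those coming from $\tau'$ kept out of $\mathrm{P}^\gamma$.

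The key point is that $\pi(\sigma)$ is a convex quadrilateral. It cannot have vertices on both sides of the arc $\pi(\beta)$ unless $\pi(\beta)$ crosses it. Recall that $\pi(\alpha)$ is removed at the first flip and $\pi(\gamma)$ is introduced at the last one. One then checks case by case that a quadrilateral with these vertex constraints either has interior meeting $\mathrm{P}^\beta$, which was handled above, or contains $\pi(\beta)$ as a diagonal or side. In the latter situation $\sigma$ is incident to a lift of $\pi(\beta)$, and we compare it with $\beta$ in the aboveness order to reach the same contradiction with (iii). I expect this last case analysis to be the main obstacle, namely the degenerate situations where $\tau$ or $\tau'$ shares the vertex $b$ or an edge with $\Pi^\beta$. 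This is precisely where the proof in \cite{PourninWang2021} is used. It relies only on Proposition~\ref{PW2.sec.2.prop.1} and on planar crossing arguments, not on convexity of $\mathrm{P}$, so it transfers verbatim to simple polygons with flat vertices.
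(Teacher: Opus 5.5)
Your first step is sound, and it is the core of the argument. If $\sigma=\psi(i)$ with $i>q$ and $\pi(\tau)$ meets the interior of $\mathrm{P}^\beta$, then $\tau$ is weakly above $\Pi_q$. So (i) forces $\tau$ to coincide with $\Pi_q$ over an open set, hence $\tau\subset\Pi^\beta$, and then (iii) contradicts $\sigma$ being above $\Pi^\beta$. The case $i\leq q$ is symmetric, using $\tau'$. For comparison, the paper does not reprove the lemma: it cites Lemma 5.7 of \cite{PourninWang2021} and states that the argument carries over.

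The gap is in what follows.
\begin{itemize}
\item The claim that the vertices of $\sigma$ lie in $\mathrm{P}^\beta\cup\{a,b,c\}$ does not follow from (ii) and its dual. A vertex of $\tau$ not in $\tau'$ is only kept out of $\mathrm{P}^\alpha\mathord{\setminus}\{b\}$, so it may lie in $\mathrm{P}^\gamma\mathord{\setminus}\{a,b\}$. Symmetrically, a vertex of $\tau'$ not in $\tau$ may lie in $\mathrm{P}^\alpha\mathord{\setminus}\{b,c\}$. Also, ``every vertex of $\sigma$ is a vertex of $\tau$ or $\tau'$'' needs $\tau\neq\tau'$.
\item Your first step only shows that the relevant base avoids the interior of $\mathrm{P}^\beta$. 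It leaves open, for instance, $\sigma$ above $\Pi^\beta$ with $\pi(\tau)$ missing that interior. So ``which was handled above'' is not accurate.
\item The case analysis on $\pi(\beta)$ being a side or a diagonal of $\pi(\sigma)$ is never carried out and is deferred to \cite{PourninWang2021}. As written, the proposal does not establish the lemma.
\end{itemize}

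The missing idea is to apply (ii) and its dual to the vertices that $\tau$ and $\tau'$ \emph{share}.
\begin{itemize}
\item Suppose $\tau\neq\tau'$. They share an edge of $\sigma$, and both endpoints of that edge avoid $(\mathrm{P}^\alpha\cup\mathrm{P}^\gamma)\mathord{\setminus}\{b\}$.
\item Here $X\subset\partial\mathrm{P}\subset\mathrm{P}^\alpha\cup\mathrm{P}^\beta\cup\mathrm{P}^\gamma$, and $\mathrm{P}^\beta$ meets $\mathrm{P}^\alpha\cup\mathrm{P}^\gamma$ only in $\{a,c\}$. Hence at least one shared endpoint $w$ lies in $\mathrm{P}^\beta\mathord{\setminus}\pi(\beta)$. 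When $\pi(\beta)\subset\partial\mathrm{P}$ this is already a contradiction.
\item A neighbourhood of $w$ in $\mathrm{P}$ lies in $\mathrm{P}^\beta$. So $\pi(\tau)$, $\pi(\tau')$, and $\pi(\sigma)$ all meet the interior of $\mathrm{P}^\beta$, and $\sigma$ is strictly above $\Pi^\beta$ (if $i>q$) or strictly below it (if $i\leq q$).
\item Your first step, applied to $\tau$ in the first case and to $\tau'$ in the second, gives the contradiction.
\item If $\tau=\tau'$, its three vertices avoid $(\mathrm{P}^\alpha\cup\mathrm{P}^\gamma)\mathord{\setminus}\{b\}$. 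So it again has a vertex in $\mathrm{P}^\beta\mathord{\setminus}\pi(\beta)$, and the same argument applies with this one triangle playing both roles.
\end{itemize}
No quadrilateral case analysis is needed. The argument uses neither convexity of $\mathrm{P}$ nor the absence of flat vertices, which is what justifies the paper's claim that the lemma carries over to simple polygons.
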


Note that Definition \ref{PW2.sec.3.defn.2} depends not only on $q$ but also on $\Pi^\beta$ and therefore on which path corresponding to $K$ in $\mathcal{F}(\mathrm{P},X)$ has been chosen. However, the intersections $\mathcal{P}\cap\mathcal{L}$ and $\mathcal{P}\cap\mathcal{U}$ really only depend on $\alpha$, $\beta$, and $\gamma$.

\begin{prop}\label{PW2.sec.3.prop.3}
A tetrahedron $\sigma$ from $\mathcal{P}$ is a lower tetrahedron of $K$ if and only if it is incident to a unique triangle $\tau$ contained $K$ such that
\begin{itemize}
\item[(i)] $\tau$ is not above $\beta$,
\item[(ii)] $\tau$ does not have a vertex in $\mathrm{P}^\alpha\mathord{\setminus}\{b\}$, and
\item[(iii)] if $\sigma$ is above $\beta$, then $\tau$ has its three vertices in $\mathrm{P}^\gamma$,
\end{itemize}
and $\sigma$ is an upper tetrahedron of $K$ if and only if \emph{(i)}, \emph{(ii)}, and \emph{(iii)} hold when ``above'' is replaced by ``below'' while $\alpha$ and $\gamma$ are exchanged.
\end{prop}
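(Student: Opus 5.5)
We will prove the statement about lower tetrahedra; the one about upper tetrahedra then follows by the symmetry that reverses the path $(T_i)_{0\leq{i}\leq{k}}$. Reversing the path turns "above" into "below", $\Pi_0$ into $\Pi_k$, and hence $\alpha$ into $\gamma$, while $\beta$ and $q$ are unchanged.

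The key observation is that a tetrahedron $\sigma$ from $\mathcal{P}$ is incident to an arc $\varepsilon$ that is above one of $\alpha,\beta,\gamma$ and below another. Since $\pi(\alpha)$ is removed by the first flip and $\pi(\gamma)$ is introduced by the last one, $\alpha$ lies in $\Pi_0$ and $\gamma$ in $\Pi_k$. Hence such an arc $\varepsilon$ must cross $\pi(C)$ in projection, or more precisely have an endpoint among $a,b,c$ and pass through the triangle bounded by $\pi(C)$. I would first describe the faces of such a $\sigma$. It projects to a quadrilateral, and its four triangles are the two lower ones, lying in $\Pi_{i-1}$ where $\sigma=\psi(i)$, and the two upper ones, lying in $\Pi_i$. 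The position of $\sigma$ relative to $\Pi^\beta$ is then governed by whether $i\leq q$ or $i>q$. Since $\beta$ is in $\Pi_q$ and $\pi(\sigma)$ overlaps $\pi(\beta)$ or is separated from it by $\varepsilon$, this amounts to whether $\sigma$ is below or above $\beta$.

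For the forward implication, take $\tau$ witnessing Definition \ref{PW2.sec.3.defn.2}. Condition (ii) is identical in both statements. I would then translate condition (i) of the definition, "not above $\Pi^\beta$", into "not above $\beta$". This uses the fact that $\tau$ has no vertex in $\mathrm{P}^\alpha\setminus\{b\}$, so $\pi(\tau)$ meets $\pi(\Pi^\beta)$ only near $\pi(\beta)$. Because $\sigma$ penetrates $C$, any aboveness of $\tau$ over $\Pi^\beta$ must already be witnessed along $\beta$. The same argument applies to (iii): if $\sigma$ is above $\beta$, then $\tau$ cannot lie in $\Pi^\beta$ below $\sigma$. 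It must therefore be a triangle of $\sigma$ whose projection avoids $\mathrm{P}^\alpha$ and $\mathrm{P}^\beta$ except at their boundaries, which forces its vertices into $\mathrm{P}^\gamma$. The converse runs the same translations backwards. The one extra point is that when $\sigma$ is below $\beta$ and $\tau$ lies in $\Pi^\beta$, we must check that $\Pi^\beta$ is above $\sigma$, which follows from $i\leq q$ and item (ii) of Proposition \ref{PW2.sec.2.prop.1}.

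Uniqueness is what I expect to be the main obstacle. Suppose two distinct triangles of $\sigma$ both satisfied (i)–(iii). Since $\sigma$ is incident to the penetrating arc $\varepsilon$, whose projection separates $a$, $b$ or $c$ from the opposite side of $\pi(C)$, I would argue that $\varepsilon$ together with one of the two triangles forces a vertex into $\mathrm{P}^\alpha\setminus\{b\}$, or forces aboveness over $\beta$. To do this I would enumerate the positions of the four vertices of $\pi(\sigma)$ relative to the three regions $\mathrm{P}^\alpha,\mathrm{P}^\beta,\mathrm{P}^\gamma$. I would first try to reduce this enumeration to the case analysis behind Lemma \ref{PW2.sec.3.lem.2}, namely Lemma 5.7 of \cite{PourninWang2021}, which already handles how penetrating tetrahedra meet $\beta$. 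Finally, the statement depends only on $\alpha,\beta,\gamma$, which confirms the remark preceding the proposition that $\mathcal{P}\cap\mathcal{L}$ does not depend on the choice of $q$.
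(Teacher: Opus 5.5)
There is a genuine gap, and it is in the step you flag as the main obstacle: you never extract the consequence of penetration that the whole proof runs on.

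First, your description of penetrating arcs is wrong. An arc with an endpoint at $a$, $b$ or $c$ can only cross the side of $\pi(C)$ opposite that endpoint. All faces have the same height at $a$, $b$, $c$, so such an arc is comparable to at most one of $\alpha$, $\beta$, $\gamma$ and never penetrates $C$. A penetrating arc crosses exactly two sides of $\pi(C)$. What you need from $\alpha\subset\Pi_0$ and $\gamma\subset\Pi_k$ is that nothing in $K$ lies below $\alpha$ or above $\gamma$. Hence every penetrating arc is above $\alpha$ and below $\beta$ or $\gamma$, or else above $\beta$ and below $\gamma$. This gives the key dichotomy for $\sigma\in\mathcal{P}$:
\begin{itemize}
\item[$\bullet$] if $\sigma$ is not above $\beta$, it is above $\alpha$ and so has a vertex in $\mathrm{P}^\alpha\setminus\pi(\alpha)$;
\item[$\bullet$] if $\sigma$ is above $\beta$, it has a vertex in $\mathrm{P}^\beta\setminus\pi(\beta)$.
\end{itemize}
The only face of a tetrahedron avoiding a given vertex is the opposite face. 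So in the first case condition (ii), and in the second case condition (iii), leave at most one candidate for $\tau$. That is the paper's entire uniqueness argument. No enumeration is needed, and Lemma 5.7 of \cite{PourninWang2021} (disjointness of $\mathcal{L}$ and $\mathcal{U}$) is not the relevant tool.

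The same dichotomy is needed for the direction ``(i)--(iii) imply $\sigma\in\mathcal{L}$'', and there your justification fails. You claim that, by (ii), $\pi(\tau)$ meets $\pi(\Pi^\beta)$ only near $\pi(\beta)$. But (ii) allows $\tau$ to have a vertex in $\mathrm{P}^\beta\setminus\pi(\beta)$, and that is exactly the case requiring work. The paper handles it as follows:
\begin{itemize}
\item[$\bullet$] By (iii), $\sigma$ is then not above $\beta$, hence above $\alpha$.
\item[$\bullet$] The edge of $\sigma$ joining its vertex in $\mathrm{P}^\alpha\setminus\pi(\alpha)$ to $\tau$'s vertex in $\mathrm{P}^\beta\setminus\pi(\beta)$ crosses both $\pi(\alpha)$ and $\pi(\beta)$. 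It is therefore above $\alpha$ and, since $\sigma$ is not above $\beta$, below $\beta$.
\item[$\bullet$] This forces $\sigma=\psi(i)$ with $i\leq q$. So $\tau$ is not above $\Pi^\beta$, and if $\tau\subset\Pi^\beta$, then $\Pi^\beta$ is above $\sigma$.
\end{itemize}
When all vertices of $\tau$ lie in $\mathrm{P}^\gamma$ instead, conditions (i) and (iii) of Definition~\ref{PW2.sec.3.defn.2} hold trivially.

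Two smaller points. The relevant split is ``above $\beta$'' versus ``not above $\beta$'', not $i>q$ versus $i\leq q$. When $\pi(\sigma)$ misses $\pi(\beta)$ (for instance $\sigma$ above $\alpha$ and below $\gamma$), you can have $i>q$ without $\sigma$ being above $\beta$, so ``this amounts to whether $\sigma$ is below or above $\beta$'' is false as stated. Also, in the forward direction you must rule out $\tau$ being below $\Pi^\beta$ as well, which is impossible because $\sigma$ is above $\beta$. Only then can you conclude that $\tau$ has no vertex in $\mathrm{P}^\beta\setminus\pi(\beta)$ and hence lies in $\mathrm{P}^\gamma$.
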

\begin{proof}
Consider a tetrahedron $\sigma$ from $\mathcal{P}$. Assume that some triangle $\tau$ in $K$ incident to $\sigma$ is not above $\beta$ and does not have a vertex in $\mathrm{P}^\alpha\mathord{\setminus}\{b\}$. If $\tau$ has its three vertices in $\mathrm{P}^\gamma$ then it cannot be above or in $\Pi^\beta$ and by Definition~\ref{PW2.sec.3.defn.2}, $\sigma$ belongs to $\mathcal{L}$. Now assume that $\sigma$ is not above $\beta$ but $\tau$ (and therefore $\sigma$) has a vertex in $\mathrm{P}^\beta\mathord{\setminus}\pi(\beta)$. As in addition, $\sigma$ belongs to $\mathcal{P}$, then this tetrahedron must be above $\alpha$ and as a consequence, some arc in $K$ incident to $\sigma$ must be above $\alpha$ and below $\beta$. Hence, $\tau$ cannot be above $\Pi^\beta$ and if it is contained in $\Pi^\beta$, then it must be above $\sigma$. By Definition~\ref{PW2.sec.3.defn.2}, $\sigma$ belongs to $\mathcal{L}$ as well.

Now assume that $\sigma$ belongs to $\mathcal{L}$ and consider a triangle $\tau$ in $K$ incident to $\sigma$ that satisfies the assertions (i), (ii), and (iii) in the statement of Definition~\ref{PW2.sec.3.defn.2}. As $\tau$ is not above $\Pi^\beta$, it cannot be above $\beta$. Further assume that $\sigma$ is above $\beta$. In that case, $\tau$ cannot be below $\Pi^\beta$ because $\sigma$ cannot be both above and below $\Pi^\beta$. Likewise $\tau$ cannot be contained in $\Pi^\beta$ because in that case, the assrtion (iii) from the statement of Definition \ref{PW2.sec.3.defn.2} would imply that $\sigma$ is below $\Pi^\beta$. As a consequence, $\tau$ is not above, below, or contained in $\Pi^\beta$. In particular that triangle cannot have a vertex in $\Pi^\beta\mathord{\setminus}\pi(\beta)$. As it does not have a vertex in $\mathrm{P}^\alpha\mathord{\setminus}\{b\}$ either, then its three vertices are contained in $\mathrm{P}^\gamma$.

Finally observe that, if $\sigma$ is not above $\beta$, then as it belongs to $\mathcal{P}$, it must be above $\alpha$ and therefore, have a vertex in $\mathrm{P}^\alpha\mathord{\setminus}\pi(\alpha)$. Hence, no other triangle than $\tau$ in $K$ is incident to $\sigma$ and has no vertex in $\mathrm{P}^\alpha\mathord{\setminus}\{b\}$. If however $\sigma$ is above $\beta$, then is must have a vertex in $\mathrm{P}^\beta\mathord{\setminus}\pi(\beta)$. In that case, $\tau$ has its three vertices in $\mathrm{P}^\gamma$ and there connot be another such triangle in $K$ incident to $\sigma$. One shows the proposition when $\sigma$ is an upper tetrahedra of $K$ by exchanging $\alpha$ with $\gamma$, reversing the aboveness relation, and replicating this proof.
\end{proof}

Let us recall another of the properties that were established in \cite{PourninWang2021} when $\mathrm{P}$ is a convex polygon but that still holds in the more general case of a simple polygon. Given a arc $\varepsilon$ of $K$, we denote by $S_K(\varepsilon)$ the \emph{star} of $\varepsilon$ in $K$:
$$
S_K(\varepsilon)=\{\sigma\in{K}:\varepsilon\subset\sigma\}\mbox{.}
$$

The following statement is proven in \cite{PourninWang2021} (see Lemma  3.2 therein) when $\mathrm{P}$ is convex and $X$ is the vertex of $\mathrm{P}$. However, its proof does not require either of these hypotheses and can be repeated word for word when $\mathrm{P}$ is simple but not necessarily convex and $X$ possibly contains flat vertices of $\mathrm{P}$.

\begin{lem}\label{PW2.sec.3.lem.3}
If $S_K(\beta)$ does not contain a triangle $\tau$ such that $\pi(C)$ is the boundary of $\pi(\tau)$, then some tetrahedron in $S_K(\beta)$ penetrates $C$. 
\end{lem}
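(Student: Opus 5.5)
I will prove the contrapositive. Assume that no tetrahedron of $S_K(\beta)$ penetrates $C$, and then produce a triangle $\tau$ of $S_K(\beta)$ whose projection $\pi(\tau)$ has boundary $\pi(C)$, that is, $\pi(\tau)$ is the triangle $\pi(\alpha)\cup\pi(\beta)\cup\pi(\gamma)$ with apex $b$ opposite $\beta$.

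The first step is to describe $S_K(\beta)$ as a sequence of triangles. Let $i_0$ be the first index with $\beta\subset\Pi_{i_0}$ and $i_1$ the last such index. For $i_0\le i\le i_1$, the arc $\pi(\beta)$ belongs to $T_i$. In each $T_i$ it is incident to two triangles, or to one when $\pi(\beta)$ is a boundary arc. A flip between $T_{i-1}$ and $T_i$ with $i_0<i\le i_1$ replaces the triangle on one side of $\pi(\beta)$ exactly when $\psi(i)$ contains $\beta$, since the quadrilateral $\pi(\psi(i))$ then has $\pi(\beta)$ as a side. Proposition~\ref{PW2.sec.2.prop.1} lets me order these tetrahedra by index. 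On each side of $\pi(\beta)$, the triangles of $S_K(\beta)$ therefore form a chain $\tau_0,\psi(j_1),\tau_1,\psi(j_2),\dots$ in which consecutive triangles share $\beta$ and bound a common tetrahedron, and this chain is ordered by aboveness. I track the apex $v_j$ of each triangle, meaning its vertex off $\beta$.

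The second step uses the extremes of the chain. Before the first flip, $\pi(\alpha)\in T_0$ lies on the side of $\pi(\beta)$ containing $b$, and it is removed at step $1$. After the last flip, $\pi(\gamma)\in T_k$ has just been introduced. I consider the chain on the side of $\pi(\beta)$ containing $b$. If some triangle in this chain has apex $b$, its boundary projects onto $\pi(C)$ and I am done. Otherwise every apex $v_j$ lies in $\mathrm{P}^\alpha\setminus\pi(\alpha)$ or in $\mathrm{P}^\gamma\setminus\pi(\gamma)$.

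The third step handles the triangles by the region of their apex. A triangle whose apex is in the interior part of $\mathrm{P}^\alpha$ has its arc from that apex to an endpoint of $\beta$ crossing $\pi(\alpha)$ or $\pi(\gamma)$ in projection. Since $\pi(\alpha)$ is removed at step $1$ and $\pi(\gamma)$ is introduced at step $k$, such arcs are above $\alpha$ (respectively below $\gamma$). More precisely, a triangle with apex in $\mathrm{P}^\alpha\setminus\pi(\alpha)$ is above $\alpha$, and one with apex in $\mathrm{P}^\gamma\setminus\pi(\gamma)$ is below $\gamma$. In the chain, the lowest triangle $\tau_0$ lies in $\Pi_{i_0}$. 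Comparing with $\Pi_0$ and $\Pi_k$, I expect to show that the apex of $\tau_0$ is in $\mathrm{P}^\alpha$ part and the apex of the top triangle is in $\mathrm{P}^\gamma$ part. The reason is that an arc from $\mathrm{P}^\gamma$ interior to $\beta$ cannot coexist with $\beta$ before $\gamma$… this direction needs care, and I would also handle the symmetric possibility.

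Walking up the chain, there is then a tetrahedron $\psi(j)$ whose lower triangle has apex $u\in\mathrm{P}^\alpha\setminus\{b\}$ and whose upper triangle has apex $w\in\mathrm{P}^\gamma\setminus\{b\}$. Its fourth edge $uw$ is an arc whose projection crosses $\pi(C)$ through both $\pi(\alpha)$ and $\pi(\gamma)$, or through $\pi(\beta)$. Since $uw$ is introduced after $\alpha$ disappears from the surfaces, it is above $\alpha$. Since it is removed, or still present, before $\gamma$ appears, it is below $\gamma$. Hence $uw$ penetrates $C$ and $\psi(j)$ penetrates $C$, which is a contradiction.

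The main obstacle is making the aboveness claims rigorous for a simple, possibly non-convex polygon with flat vertices. In particular, I must justify that crossing projections together with the order of the surfaces $\Pi_i$ give strict aboveness, and I must deal with the degenerate cases where $\pi(\alpha)$ or $\pi(\gamma)$ is a boundary arc. I would handle these through Proposition~\ref{PW2.sec.2.prop.2} and item (ii) of Proposition~\ref{PW2.sec.2.prop.1}.
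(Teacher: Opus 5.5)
\textbf{Gap: the two ends of the apex chain.} The paper gives no proof of its own here. It cites \cite[Lemma 3.2]{PourninWang2021} and says that argument carries over verbatim. So I judge your proposal on its own terms.

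Tracking the apices of the $b$-side triangles of $S_K(\beta)$ is the right idea, and your final step is essentially sound. However, there is a genuine gap at the two ends of the chain, and the end behaviour you ``expect'' is backwards.

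\emph{Orientation.} If $\beta\subset\Pi_0$, then $T_0$ contains $\pi(\alpha)$. So the lowest apex $v_0$ cannot lie in $\mathrm{P}^\alpha\setminus\pi(\alpha)$, since the segment from $a$ to $v_0$ would cross $\pi(\alpha)$. Hence $v_0$ is $b$ or lies in $\mathrm{P}^\gamma\setminus\pi(\gamma)$. Symmetrically, if $\beta\subset\Pi_k$, the highest apex is $b$ or lies in $\mathrm{P}^\alpha\setminus\pi(\alpha)$.

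\emph{Missing tetrahedra.} When $\beta\not\subset\Pi_0$ or $\beta\not\subset\Pi_k$, nothing constrains the end apices. Your chain also omits the tetrahedra $\psi(i_0)$ and $\psi(i_1+1)$ that create and destroy $\beta$, although both lie in $S_K(\beta)$, and they can be the only penetrating ones. Concretely, take the convex hexagon with vertices $a,w,c,v,b,g$ in cyclic order and the path through $\{bc,cg,wg\}$, $\{vg,cg,wg\}$, $\{vg,wv,wg\}$, $\{vg,wv,av\}$, $\{vg,ac,av\}$, $\{ac,av,ab\}$. Here $\pi(\alpha)=bc$, $\pi(\beta)=ac$, $\pi(\gamma)=ab$.
\begin{itemize}
\item The $b$-side chain of $\beta$ is the single triangle $acv$, with $v\in\mathrm{P}^\alpha\setminus\pi(\alpha)$.
\item So there is no transition and your argument finds nothing.
\item The lemma holds via $\psi(4)$: its lower diagonal $wv$ crosses $\pi(\beta)$ and $\pi(\alpha)$, so it is above $\alpha$ and below $\beta$.
\end{itemize}

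\textbf{Repair.}
\begin{itemize}
\item If $\beta\not\subset\Pi_0$ and the lowest apex is in $\mathrm{P}^\alpha\setminus\pi(\alpha)$, the lower diagonal of $\psi(i_0)$ must cross $\pi(\alpha)$. It then penetrates, being above $\alpha$ and below $\beta$.
\item Symmetrically, if $\beta\not\subset\Pi_k$ and the highest apex is in $\mathrm{P}^\gamma\setminus\pi(\gamma)$, the upper diagonal of $\psi(i_1+1)$ is above $\beta$ and below $\gamma$.
\item Otherwise the lowest apex is in $\mathrm{P}^\gamma$ and the highest in $\mathrm{P}^\alpha$. Some tetrahedron of the chain then has lower apex $u\in\mathrm{P}^\gamma\setminus\pi(\gamma)$ and upper apex $w\in\mathrm{P}^\alpha\setminus\pi(\alpha)$, the opposite of what you wrote.
\end{itemize}
Your last step still works in that case, because the side $uw$ crosses both $\pi(\alpha)$ and $\pi(\gamma)$ whichever way the transition goes. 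You must justify this crossing, though. The flip quadrilateral $\mathrm{conv}\{a,c,u,w\}$ contains no point of $X$ other than its vertices, so $b$ lies beyond the side $uw$. Drop the alternative ``through $\pi(\beta)$'': $u$ and $w$ lie on the same side of $\pi(\beta)$.

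\textbf{Aboveness is the easy part.} For $i\geq 1$, $\Pi_i$ is weakly above $\Pi_1$, which is strictly above the relative interior of $\alpha$. For $i\leq k-1$, $\Pi_i$ is weakly below $\Pi_{k-1}$, which is strictly below the relative interior of $\gamma$.
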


\begin{figure}[b]
\begin{centering}
\includegraphics[scale=1]{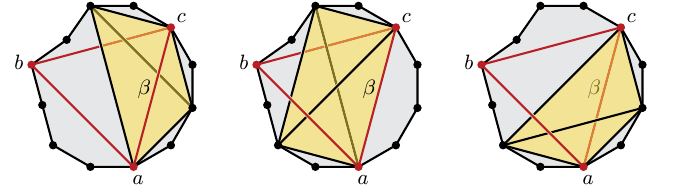}
\caption{The possible tetrahedra in $S_K(\beta)$ that penetrate $C$.}\label{PW2.sec.3.fig.2}
\end{centering}
\end{figure}

Theorem 5.8 from \cite{PourninWang2021} states that if $\mathrm{P}$ is convex and $X$ is its vertex set then there exists a blow-up triangulation $N$ of $\mathrm{P}$ that corresponds to a path of length $|\mathcal{L}\cup\mathcal{U}|$ in $\mathcal{F}(\mathrm{P},X)$ from $T_0$ to $T_k$. Assuming that the distance of $T_0$ and $T_k$ in $\mathcal{F}(\mathrm{P},X)$ is equal to $k$ or equivalently to the number of tetrahedra in $K$, it then follows from Lemma \ref{PW2.sec.3.lem.2} that the set of the tetrahedra of $K$ is the disjoint union of $\mathcal{L}$ and $\mathcal{U}$. According to Lemma \ref{PW2.sec.3.lem.3}, this can only happen when the image by $\pi$ of some triangle in $S_K(\beta)$ admits $\pi(C)$ as its boundary. Otherwise that lemma states that $S_K(\beta)$ contains one of the tetrahedra shown in Fig. \ref{PW2.sec.3.fig.2}. However, by Proposition \ref{PW2.sec.3.prop.3}, none of them can belong to $\mathcal{L}$ or $\mathcal{U}$. Indeed, as one can see in the figure, every triangular face of these tetrahedra is above $\beta$ or has a vertex in $\mathrm{P}^\alpha\mathord{\setminus}\{b\}$ and below $\beta$ or has a vertex in $\mathrm{P}^\gamma\mathord{\setminus}\{b\}$. This contradicts the above statement that the set of the tetrahedra of $K$ is the disjoint union of $\mathcal{L}$ and $\mathcal{U}$. Hence, we obtain the following decomposition lemma.

\begin{lem}[{\cite[Lemma 5.10]{PourninWang2021}}]\label{PW2.sec.3.lem.4}
Assume that $\mathrm{P}$ is convex and that $X$ is the vertex set of $\mathrm{P}$. If $(T_i)_{0\leq{i}\leq{k}}$ is a geodesic path in $\mathcal{F}(\mathrm{P},X)$, then $S_K(\beta)$ must contain a triangle $\tau$ such that $\pi(\tau)$ admits $\pi(C)$ for its boundary. 
\end{lem}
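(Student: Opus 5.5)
The plan follows the argument sketched just before the statement and argues by contradiction. Assume that $(T_i)_{0\leq{i}\leq{k}}$ is a geodesic in $\mathcal{F}(\mathrm{P},X)$. Assume also that $S_K(\beta)$ contains no triangle $\tau$ for which $\pi(\tau)$ has boundary $\pi(C)$. Since the path is geodesic, $K$ has exactly $k=d(T_0,T_k)$ tetrahedra, where $d$ denotes the distance in $\mathcal{F}(\mathrm{P},X)$.

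The first step invokes Theorem~5.8 of \cite{PourninWang2021}. It holds because $\mathrm{P}$ is convex and $X$ is its vertex set, and it yields a blow-up triangulation $N$ corresponding to a path from $T_0$ to $T_k$ of length $|\mathcal{L}\cup\mathcal{U}|$. Hence $k\leq|\mathcal{L}\cup\mathcal{U}|$. Both $\mathcal{L}$ and $\mathcal{U}$ are sets of tetrahedra of $K$, so $|\mathcal{L}\cup\mathcal{U}|\leq k$. Together with Lemma~\ref{PW2.sec.3.lem.2}, this shows that every tetrahedron of $K$ lies in exactly one of $\mathcal{L}$ or $\mathcal{U}$.

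The second step applies Lemma~\ref{PW2.sec.3.lem.3}. Under our assumption it gives a tetrahedron $\sigma\in S_K(\beta)$ that penetrates $C$, so $\sigma\in\mathcal{P}$. Since $\sigma$ contains $\beta$ and is incident to an arc that is both above and below arcs of $C$, its remaining two vertices are constrained. I would enumerate the possible configurations, namely those of Fig.~\ref{PW2.sec.3.fig.2}. The vertices of $\beta$ are $a$ and $c$, and the other two vertices of $\sigma$ lie in the regions $\mathrm{P}^\alpha$, $\mathrm{P}^\beta$, $\mathrm{P}^\gamma$. The position of $\sigma$ (above or below $\beta$) fixes which regions can occur.

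The third step uses the characterization in Proposition~\ref{PW2.sec.3.prop.3}, which is available since $\sigma\in\mathcal{P}$. In each case I check every triangular face $\tau$ of $\sigma$. To rule out membership in $\mathcal{L}$, I show that $\tau$ is above $\beta$, or has a vertex in $\mathrm{P}^\alpha\mathord{\setminus}\{b\}$, or violates condition (iii). Symmetrically, to rule out membership in $\mathcal{U}$, I show that $\tau$ is below $\beta$, or has a vertex in $\mathrm{P}^\gamma\mathord{\setminus}\{b\}$, or violates the reversed condition (iii). Then $\sigma\notin\mathcal{L}\cup\mathcal{U}$, which contradicts the partition from the first step. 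Therefore $S_K(\beta)$ contains the required triangle.

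The main obstacle is the case analysis in the third step. I must make sure the enumeration of penetrating tetrahedra in $S_K(\beta)$ is complete. I must also make sure that the faces of $\sigma$ containing $\beta$ itself are handled correctly. Those faces are neither above nor below $\beta$, so they have to be excluded through the vertex conditions or through condition (iii). For that I would use that a penetrating $\sigma$ not above $\beta$ must be above $\alpha$, and hence has a vertex in $\mathrm{P}^\alpha\mathord{\setminus}\pi(\alpha)$, as in the proof of Proposition~\ref{PW2.sec.3.prop.3}. The symmetric statement, that a penetrating $\sigma$ not below $\beta$ must be below $\gamma$ and so has a vertex in $\mathrm{P}^\gamma\mathord{\setminus}\pi(\gamma)$, handles the $\mathcal{U}$ side.
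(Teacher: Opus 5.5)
Your proposal is correct and follows the paper's own argument: Theorem~5.8 of \cite{PourninWang2021} together with Lemma~\ref{PW2.sec.3.lem.2} forces the tetrahedra of $K$ to be partitioned into $\mathcal{L}$ and $\mathcal{U}$. Lemma~\ref{PW2.sec.3.lem.3} then yields a penetrating tetrahedron in $S_K(\beta)$ (one of those in Fig.~\ref{PW2.sec.3.fig.2}), and Proposition~\ref{PW2.sec.3.prop.3} excludes it from both sets, a contradiction.

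One simplification: the faces of $\sigma$ that contain $\beta$ need no argument about $\sigma$ lying above $\alpha$ or below $\gamma$. Such faces contain both $c\in\mathrm{P}^\alpha\mathord{\setminus}\{b\}$ and $a\in\mathrm{P}^\gamma\mathord{\setminus}\{b\}$, so they violate condition (ii) on both the lower and upper sides.
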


The only obstacle for Lemma \ref{PW2.sec.3.lem.4} to hold in the case when $\mathrm{P}$ is non-convex and $X$ may contain flat vertices is the existence of the blow-up triangulation $N$ provided by Theorem 5.8 from \cite{PourninWang2021}. While $N$ may indeed not exist in this more general case, we will provide sufficient condition on $K$ so that it does. In order to do that, let us recall how $N$ is built from $K$. According to Proposition \ref{PW2.sec.3.prop.3}, a lower tetrahedron $\sigma$ of $K$ that penetrates $C$ is incident to a unique triangle $\tau$ in $K$ that satisfies the three assertions (i), (ii), and (iii) in its statement. Likewise if $\sigma$ is an upper tetrahedron of $K$ that penetrates $C$, there is a unique triangle $\tau$ of $K$ incident to $\sigma$ that satisfies these assertions where ``above'' is replaced by ``below'' and $\alpha$ by $\gamma$. In both cases, we refer to $\tau$ as the \emph{base} of $\sigma$. 

\begin{defn}\label{PW2.sec.3.defn.3}
Consider a tetrahedron $\sigma$ in $\mathcal{P}\cap(\mathcal{L}\cup\mathcal{U})$ with base $\tau$ and a point $x$ in $X$ that is not a vertex of $\sigma$. We say that $\sigma$ \emph{can be pulled to $x$} when the quadrilateral obtained as the convex hull of $\pi(\tau)\cup\{x\}$ is contained in $\mathrm{P}$ and does not contain any point of $X$ other than its vertices.
\end{defn}

When a tetrahedron $\sigma$ in $\mathcal{P}\cap(\mathcal{L}\cup\mathcal{U})$ with base $\tau$ can be pulled to a point $x$ in $X$ that is not a vertex of $\sigma$, the pulling operation results in a tetrahedron $\sigma'$ that is still incident to $\tau$ but whose fourth vertex is $x$. Moreover, $\tau$ is below $\sigma'$ if and only if it is below $\sigma$. In the case when $\mathrm{P}$ is convex and $X$ is its vertex set, the blow-up triangulation $N$ provided by Theorem~5.8 from \cite{PourninWang2021} is built from $\mathcal{L}\cup\mathcal{U}$ by pulling all the tetrahedra in $\mathcal{P}\cap\mathcal{L}$ to $c$, by pulling all the tetrahedra in $\mathcal{P}\cap\mathcal{U}$ to $a$, and by leaving the tetrahedra in $(\mathcal{L}\cup\mathcal{U})\mathord{\setminus}\mathcal{P}$ unaffected. It is observed at the end of \cite[Section 5]{PourninWang2021} that in fact, the existence of $N$ really only depends on whether all of these pulling operations are possible, leading to a generalized decomposition lemma in the case when $\mathrm{P}$ is still convex but $X$ may contain flat vertices (see \cite[Lemma 5.12]{PourninWang2021}). This observation turns out to remain true more generally when $\mathrm{P}$ is non-convex: one can still build $N$ using these pulling operations on the condition that the lower and upper tetrahedra of $K$ that penetrate $C$ can all be pulled to $c$ and $a$, respectively, in the sense of Definition \ref{PW2.sec.3.defn.3}. We therefore immediately obtain the following.

\begin{lem}\label{PW2.sec.3.lem.5}
Consider a simple polygon $\mathrm{P}$ and a finite subset $X$ of the boundary of $\mathrm{P}$ that contains all of the vertices of $\mathrm{P}$. Further consider a blow-up triangulation $K$ of $\mathrm{P}$ that corresponds to a  geodesic path in $\mathcal{F}(\mathrm{P},X)$ and three arcs $\alpha$, $\beta$, and $\gamma$ of $K$ whose union is a topological circle $C$. If
\begin{itemize}
\item[(i)] $K$ does not have any face below $\alpha$ or above $\gamma$,
\item[(ii)] every tetrahedron in $\mathcal{P}\cap\mathcal{L}$ can be pulled to $c$, and
\item[(iii)] every tetrahedron in $\mathcal{P}\cap\mathcal{U}$ can be pulled to $a$,
\end{itemize}
then $\pi(C)$ is the boundary of the image by $\pi$ of some triangle in $S_K(\beta)$. 
\end{lem}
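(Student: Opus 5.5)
We argue by contradiction, following the argument that gave Lemma~\ref{PW2.sec.3.lem.4}, and replace the appeal to Theorem~5.8 of \cite{PourninWang2021} by a direct construction of $N$ that uses hypotheses (ii) and (iii). Assume that no triangle of $S_K(\beta)$ has an image by $\pi$ bounded by $\pi(C)$. Then Lemma~\ref{PW2.sec.3.lem.3} provides a tetrahedron $\sigma_0$ in $S_K(\beta)$ that penetrates $C$, and $\sigma_0$ is one of the tetrahedra of Fig.~\ref{PW2.sec.3.fig.2}. As in the discussion preceding Lemma~\ref{PW2.sec.3.lem.4}, every triangular face of $\sigma_0$ is above $\beta$ or has a vertex in $\mathrm{P}^\alpha\mathord{\setminus}\{b\}$. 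Likewise, every such face is below $\beta$ or has a vertex in $\mathrm{P}^\gamma\mathord{\setminus}\{b\}$. By Proposition~\ref{PW2.sec.3.prop.3}, $\sigma_0$ therefore belongs to neither $\mathcal{L}$ nor $\mathcal{U}$. Since $\mathcal{L}$ and $\mathcal{U}$ are disjoint by Lemma~\ref{PW2.sec.3.lem.2}, we get $|\mathcal{L}\cup\mathcal{U}|\leq k-1$. It then suffices to exhibit a path from $T_0$ to $T_k$ in $\mathcal{F}(\mathrm{P},X)$ of length $|\mathcal{L}\cup\mathcal{U}|$, which contradicts the assumption that the path underlying $K$ is geodesic.

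To build that path, we construct the blow-up triangulation $N$ as in \cite{PourninWang2021}. We keep the tetrahedra of $(\mathcal{L}\cup\mathcal{U})\mathord{\setminus}\mathcal{P}$ unchanged, pull each tetrahedron of $\mathcal{P}\cap\mathcal{L}$ to $c$, and pull each tetrahedron of $\mathcal{P}\cap\mathcal{U}$ to $a$. Hypotheses (ii) and (iii) are exactly what makes each pulling legitimate: the quadrilateral $\mathrm{conv}(\pi(\tau)\cup\{x\})$ lies in $\mathrm{P}$ and contains no other point of $X$. Consequently the pulled tetrahedron has as its image by $\pi$ a convex quadrilateral with vertices in $X$ and no other point of $X$ in it, so it still corresponds to a genuine flip in $\mathcal{F}(\mathrm{P},X)$. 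The local step is a convexity check, and it is the only place where non-convexity of $\mathrm{P}$ or flat vertices could fail. Hypothesis (i) serves the relaxation mentioned at the start of the section: it ensures that $\alpha$ lies in $\Pi_0$ and $\gamma$ in $\Pi_k$, and that nothing in $K$ sits below $\alpha$ or above $\gamma$. Hence, after reordering the path, we may take $\pi(\alpha)$ to be removed by the first flip and $\pi(\gamma)$ to be introduced by the last one, which is the setting in which $\mathcal{P}$, $\mathcal{L}$, $\mathcal{U}$ and the bases were defined. Since $X$ lies on the boundary of $\mathrm{P}$, no puncture appears, as the section requires.

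The remaining and main step is to check that the resulting collection $N$ really is a blow-up triangulation from $T_0$ to $T_k$. One orders the lower tetrahedra first and the upper ones last, following the partial order from Proposition~\ref{PW2.sec.2.prop.2}, and verifies that the successive surfaces form a valid sequence of triangulations. Roughly, the lower part sweeps $\Pi_0$ up to a triangulation that contains the triangle $abc$, and the upper part sweeps from there to $\Pi_k$. The combinatorial incidence arguments of \cite[Section 5]{PourninWang2021} use convexity only through the existence of the pulled flips. I would therefore reread that proof and confirm that every other step is purely combinatorial, using aboveness, the bases from Proposition~\ref{PW2.sec.3.prop.3}, and the partition of $\mathrm{P}$ into $\mathrm{P}^\alpha$, $\mathrm{P}^\beta$, $\mathrm{P}^\gamma$ and the triangle bounded by $\pi(C)$, so that it transfers verbatim once (ii) and (iii) provide the flips. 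I expect this bookkeeping, and not the counting, to be the main obstacle. With it in place, $N$ has $|\mathcal{L}\cup\mathcal{U}|<k$ tetrahedra, which gives the contradiction.
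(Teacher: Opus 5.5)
Your proposal follows the paper's own route. You argue by contradiction with Lemma~\ref{PW2.sec.3.lem.3} and Proposition~\ref{PW2.sec.3.prop.3} to obtain a tetrahedron of $K$ outside $\mathcal{L}\cup\mathcal{U}$, and you contradict geodesicity with the blow-up triangulation $N$ from the pulling construction of \cite{PourninWang2021}; hypotheses (ii) and (iii) are exactly what make the pulls legitimate, and the check that $N$ is a genuine blow-up triangulation is deferred to \cite[Section 5]{PourninWang2021}, just as the paper does.

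One inessential slip. Hypothesis (i) does give $\alpha\subset\Pi_0$ and $\gamma\subset\Pi_k$, but it does not let you reorder the path so that $\pi(\alpha)$ is removed by the first flip. For instance, $\pi(\alpha)$ may be an edge of $\mathrm{P}$, which is never removed. Or an earlier flip may have $\pi(\alpha)$ as a side of its quadrilateral: its tetrahedron is then not below $\alpha$, yet it lies below the tetrahedron that removes $\alpha$. You do not need this reduction, because the paper uses (i) itself as the relaxed hypothesis for the section's definitions and lemmas.
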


\begin{rem}\label{PW2.sec.3.rem.1}
The requirement that $K$ does not have any face below $\alpha$ or above $\gamma$ can be relaxed in the statement of Lemma \ref{PW2.sec.3.lem.5} into only asking that $\alpha$ is not above $\beta$ or $\gamma$ and that $\gamma$ is not below $\alpha$ or $\beta$ with respect to the transitive closure  of the aboveness relation. Indeed, under this weaker assumption, observe that according to Proposition~\ref{PW2.sec.2.prop.2} one can remove all the faces of $K$ below $\alpha$ or above $\gamma$, resulting in a smaller blow-up triangulation of $\mathrm{P}$ that still contains $\alpha$, $\beta$, and $\gamma$ but that satisfies the statement of Lemma \ref{PW2.sec.3.lem.5}.
\end{rem}

\section{Large polygons with a single reflex vertex}\label{PW2.sec.4}

The main aim of this section is to describe the family of triangulation pairs that the proofs of Theorems \ref{PW2.sec.1.thm.0}, \ref{PW2.sec.1.thm.1}, and \ref{PW2.sec.1.thm.2} will be based on. We will also show how these three proofs can be reduced to establishing a single statement. For the remainder of the article, $n$ and $m$ denote two fixed integers such that $n$ is odd and at least $3$ while $m$ is non-negative. Denote by $H$ and $V$ the horizontal and vertical lines of $\mathbb{R}^2$ through the origin. The announced triangulations will be triangulations of polygons contained in the following set $\mathcal{Q}$.

\begin{figure}[b]
\begin{centering}
\includegraphics[scale=1]{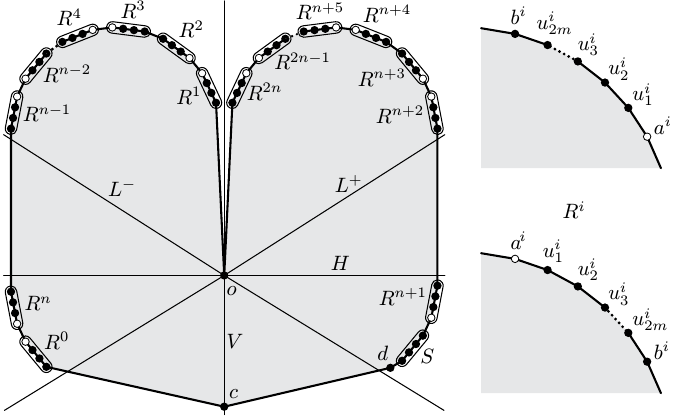}
\caption{The polygon $\mathrm{Q}$ (left) and the labeling of $R^i$ when $i$ is even (top right) and odd (bottom right).}\label{PW2.sec.4.fig.1}
\end{centering}
\end{figure}

\begin{defn}\label{PW2.sec.4.defn.1}
A polygon $\mathrm{Q}$ belongs to $\mathcal{Q}$ when the origin of $\mathbb{R}^2$ is its unique reflex vertex and there exist two lines $L^-$ and $L^+$ such that
\begin{itemize}
\item[(i)] the only vertex of $\mathrm{Q}$ in $H$, $L^-$, and $L^+$ is the origin of $\mathbb{R}^2$,
\item[(ii)] $V\cap\mathrm{Q}$ is the line segment one extremity of which is the origin of $\mathbb{R}^2$ and the other a vertex $c$ of $\mathrm{Q}$ contained in the lower half-plane,
\item[(iii)] the two portions of $\mathrm{Q}$ in the upper half-plane above $L^-$ and $L^+$ and on each side of $V$ contain exactly $2(n-1)(m+1)$ vertices of $\mathrm{Q}$,
\item[(iv)] the portion of $\mathrm{Q}$ above $L^-$ in the lower half-plane contains at least $8n(m+1)+6m+9$ vertices of $\mathrm{Q}$ and the portion above $L^+$ in the lower half-plane contains exactly $4(m+1)$ vertices of $\mathrm{Q}$, and
\item[(v)] there is no vertex of $\mathrm{Q}$ below $L^-$ or $L^+$ in the upper half-plane and the only vertex of $\mathrm{Q}$ below $L^-$ and $L^+$ in the lower half-plane is $c$.
\end{itemize}
\end{defn}

Note that $\mathcal{Q}$ depends on $n$ and $m$ but we will not mark this dependence in order to not to overburden the notation. A polygon from $\mathcal{Q}$ is sketched in Fig. \ref{PW2.sec.4.fig.1}. For readability, apart from $o$, $c$, and $d$, the vertices of $\mathrm{Q}$ are grouped in the figure into $2n+2$ subsets of consecutive vertices that we denote by $S$ and $R^0$ to $R^{2n}$. Each of the $R^i$ contains $2(m+1)$ vertices while $S$ contains at least $8n(m+1)+4m+6$ vertices. The labeling of the vertices of $R^i$ that we shall use throughout the article is indicated on the right of Fig. \ref{PW2.sec.4.fig.1}: if $i$ is even, then the vertices in $R^i$ are labeled by $a^i$, then $u^i_1$ to $u^i_{2m}$, then $b^i$ counter-clockwise around $\mathrm{Q}$. If $i$ is odd, then the labeling is the same except that it is done clockwise around $\mathrm{Q}$ instead of counter-clockwise as shown bottom right in Fig.~\ref{PW2.sec.4.fig.1}. The vertices $a^0$ to $a^{2n}$ are marked using outlined dots.

The purpose of the next two sections is to prove the following theorem.

\begin{thm}\label{PW2.sec.4.lem.2}
Consider a polygon $\mathrm{Q}$ in $\mathcal{Q}$ with vertex set $X$. If $n$ and $m/n$ are both large enough, then there exists a line segment $\varepsilon$ such that $\mathcal{F}_\varepsilon(\mathrm{Q},X)$ is not a strongly convex subgraph of $\mathcal{F}(\mathrm{Q},X)$.
\end{thm}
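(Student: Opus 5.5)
The plan is to exhibit two explicit triangulations $T^-$ and $T^+$ of $\mathrm{Q}$ with vertex set $X$ that share an interior arc $\varepsilon$ (the natural candidate being the vertical segment $V\cap\mathrm{Q}$ from the origin $o$ to $c$, or an arc from $o$ into $S$). We then compare two quantities. The first is an upper bound $\ell$ on the flip distance obtained by an explicit path that does not keep $\varepsilon$: it removes $\varepsilon$, uses the reflex vertex $o$ to ``rotate'' a fan across the $R^i$, and reinserts $\varepsilon$ at the end. The second is a lower bound, strictly larger than $\ell$, on the length of any path that stays inside $\mathcal{F}_\varepsilon(\mathrm{Q},X)$. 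The triangulation pairs will be chosen so that, restricted to each $R^i$, $T^-$ and $T^+$ differ by fans based at $a^i$ and $b^i$ respectively, with the $u^i_j$ in between. Each $R^i$ therefore contributes about $2m$ flips whatever happens, and the fan-rotation shortcut saves an amount that grows with $n$. The size of $S$ in (iv) of Definition~\ref{PW2.sec.4.defn.1} is chosen so that any path keeping $\varepsilon$ must pay separately on the two sides of $\varepsilon$ and cannot use $S$ as a reservoir.

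The lower bound proceeds in the following order. Suppose, for contradiction, that every geodesic from $T^-$ to $T^+$ stays in $\mathcal{F}_\varepsilon(\mathrm{Q},X)$, and take the blow-up triangulation $K$ of such a geodesic. Since $\varepsilon$ is present throughout, $K$ splits along the vertical disk over $\varepsilon$ into blow-up triangulations of the two subpolygons $\mathrm{Q}^L$ and $\mathrm{Q}^R$ on either side of $V$. Each of these subpolygons has only convex vertices, up to the flat vertex at $o$. We then apply Lemma~\ref{PW2.sec.3.lem.5} repeatedly, using Remark~\ref{PW2.sec.3.rem.1} to discard faces below $\alpha$ or above $\gamma$. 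The circles $C$ are formed from arcs $a^ib^i$-type triangles of $T^-$ and $T^+$ joined by a middle arc $\beta$. The task is to check hypotheses (ii) and (iii): each penetrating lower (resp. upper) tetrahedron must be pullable to $c$ (resp. $a$). This is where the line constraints $L^\pm$ and condition (v) enter, since they ensure that the convex hull of the base together with $c$ (or $a$) lies in $\mathrm{Q}$ and contains no other point of $X$. Each application of Lemma~\ref{PW2.sec.3.lem.5} yields a triangle along which $K$ is cut, and iterating reduces $K$ to one core piece plus pieces whose sizes are known exactly (flip distances between fans in convex polygons).

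The remaining core piece is a ball whose boundary sphere is determined by the combinatorics of the $2n+1$ blocks $R^i$ with alternating orientation. We realize this boundary sphere as an ideal polyhedron in $\mathbb{H}^3$, with vertices on $\partial\mathbb{H}^3$ chosen symmetrically, for instance around a regular $(2n+1)$-gon with the $u^i_j$ clustered. Any triangulation of the core into $t$ tetrahedra has volume at most $t\,v_3$, where $v_3$ is the volume of the regular ideal tetrahedron. So $t\ge \mathrm{vol}/v_3$, and we need this volume to grow like $c\,nm$ with $c$ beating the savings of the explicit $\varepsilon$-removing path, which holds once $n$ and $m/n$ are large. This final estimate is the main obstacle: it requires choosing the ideal embedding so that the volume is provably large. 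My first attempt would be to decompose the ideal polyhedron into explicit ideal pyramids over a central vertex, computing their volumes via the Lobachevsky function, and to show that the per-block contribution tends to a constant strictly above what is needed as $m/n\to\infty$. The verification of the pulling conditions in the middle step is tedious but should be routine given Definition~\ref{PW2.sec.4.defn.1}.
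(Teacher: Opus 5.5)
Your plan has a genuine gap at its first step, the choice of the shared arc. Both candidates you name ($oc$, or an arc from $o$ into $S$) are incident to the reflex vertex $o$. For such arcs, strong convexity provably holds on polygons that Theorem~\ref{PW2.sec.4.lem.2} must cover.

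Take $\mathrm{Q}\in\mathcal{Q}$ whose vertices other than $o$ are in convex position, as produced by Lemma~\ref{PW2.sec.4.lem.1.5}. Let $x$ and $y$ be the neighbours of $o$. Then $\mathrm{Q}$ is obtained from a convex polygon $\mathrm{P}$ with vertex set $X\setminus\{o\}$, punctured at $o$, by removing the triangle $oxy$. Hence $\mathcal{F}(\mathrm{Q},X)$ is the induced subgraph $\mathcal{F}_{ox}\cap\mathcal{F}_{oy}$ of $\mathcal{F}(\mathrm{P},X)$. By \cite[Theorem 4.3]{ParlierPournin2018b}, recalled in the introduction, $\mathcal{F}_\eta(\mathrm{P},X)$ is strongly convex for every arc $\eta$ incident to the puncture. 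Intersecting these subgraphs, $\mathcal{F}_\varepsilon(\mathrm{Q},X)$ is strongly convex in $\mathcal{F}(\mathrm{Q},X)$ for every arc $\varepsilon$ incident to $o$. So no choice of $T^\pm$ gives your shortcut, and a lower bound exceeding $\ell$ for paths inside $\mathcal{F}_\varepsilon$ cannot hold. Your own remark that both sides of $oc$ are convex is a symptom of this.

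More generally, fixing $\varepsilon$ and showing that every geodesic leaves $\mathcal{F}_\varepsilon$ is a non-weak-convexity statement, stronger than the theorem. The paper never names the arc. It assumes, for contradiction, that the blow-up triangulation $K$ of a geodesic from $T^-$ to $T^+$ has no two arcs with the same endpoints. That global hypothesis does two jobs: it pinches $K$ along $b^0d$, and it guarantees that the triangle given by Lemma~\ref{PW2.sec.3.lem.5} for the circle over $a^na^{n+1}d$ is bounded by $\alpha\cup\beta\cup\gamma$ itself. Only then can $K$ be cut along $\Delta$, $\delta$ and $\Gamma$. The arc $\beta$ from $a^n$ to $d$ exists in every geodesic because $S$ is large (Lemma~\ref{PW2.sec.5.lem.1}). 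Pulling the upper tetrahedra to $d$ also uses that lemma, not just $L^\pm$ and condition (v).

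The volume step is also insufficient as stated. In the paper, $K_{ii}$ has at most $8n(m+1)+2m+4$ tetrahedra (Proposition~\ref{PW2.sec.5.prop.1}). Meanwhile $\mathrm{vol}(\Xi)/v_3$ is only known to be at least about $2(4n+1)(m-1)-\epsilon m$. These agree in the terms $8nm$ and $2m$ and differ by $16n+8+\epsilon m$, so $t\geq\mathrm{vol}/v_3$ gives no contradiction. You give no construction in which a constant $c$ wins at leading order.

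The paper uses volume only to show that all but $O(n)+\epsilon m$ tetrahedra are nearly regular (Lemma~\ref{PW2.sec.6.lem.2.5}). It then runs a rigidity count (Lemmas~\ref{PW2.sec.6.lem.3}--\ref{PW2.sec.6.lem.5}) with three steps:
\begin{itemize}
\item Any edge of $\mathcal{E}^\star$ not incident to exactly two tetrahedra forces a tetrahedron of volume at most $\tfrac23v_3$ or $\tfrac56v_3$, and there are few of these.
\item Hence, for all but $O(n)+\epsilon m$ of the $m-1$ strips, the $2(4n+1)$ tetrahedra on a strip all share the vertex $o$.
\item The $2(m+1)$ faces coming from $\Gamma$ avoid $o$, so they need about $m/2$ tetrahedra outside $\mathcal{T}^\star$, against at most $m/10+O(n)$ available.
\end{itemize}
This second-order count is the heart of the proof, and it is missing from your plan. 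Finally, the geometry you sketch works against a volume bound. Fans in each block and clustered $u^i_j$ produce thin faces that carry little volume. The paper's zigzag ribbon, alternating between $T^-$ and $T^+$ so that most vertices have degree six, is what lets almost all faces be placed as equilateral triangles.
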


Before we proceed to describing the families of triangulations of the polgyons in $\mathcal{Q}$ that our proof of Theorem \ref{PW2.sec.4.lem.2} will be based on, let us show how Theorems~\ref{PW2.sec.1.thm.0}, \ref{PW2.sec.1.thm.1} and~\ref{PW2.sec.1.thm.2} follow from Theorem \ref{PW2.sec.4.lem.2}.

\begin{lem}\label{PW2.sec.4.lem.1.5}
Consider a convex polygon $\mathrm{P}$ with vertex set $X$. If $X$ is large enough, then there exists a puncture $p$ of $\mathrm{P}$ such that the vertex set of some polygon in $\mathcal{Q}$ is a subset of $X\cup\{p\}$ up to an affine transformation of $\mathbb{R}^2$.
\end{lem}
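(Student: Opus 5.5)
We need to show that for a large convex polygon $\mathrm{P}$ with vertex set $X$, some puncture $p$ and some polygon $\mathrm{Q}\in\mathcal{Q}$ can be matched so that, after an affine map sending $p$ to the origin, every vertex of $\mathrm{Q}$ lies in $X\cup\{p\}$. The plan is to take $p$ as the reflex vertex $o$ and choose $\mathrm{Q}$ as a subpolygon of $\mathrm{P}$: a chain of consecutive-or-not vertices of $X$ closed up through $p$. The key observation is that Definition~\ref{PW2.sec.4.defn.1} only constrains vertex counts in regions cut out by four lines through the origin ($H$, $V$, $L^-$, $L^+$). Hence we may pass to a \emph{subset} of the vertices of $\mathrm{P}$, which is still in convex position, together with $p$. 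Since the total number of required vertices is $N=4(n-1)(m+1)+4(m+1)+8n(m+1)+6m+9+2$ (with $c$ and the vertex $d$), it suffices that $|X|\geq N$, plus a constant slack.

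First we choose $p$. Pick a vertex $c$ of $\mathrm{P}$ and place $p$ in the interior of $\mathrm{P}$ very close to $c$. We then apply an affine map so that $p$ is the origin and $c$ lies on the negative vertical axis. Because $p$ is near $c$, the line $H$ through $p$ perpendicular to $pc$ has almost all of the other vertices of $\mathrm{P}$ on the side opposite to $c$. We can also perturb $p$ so that no line through $p$ and two vertices exists, and so that no vertex lies on $H$ or $V$. This gives general position, so that condition (i) of Definition~\ref{PW2.sec.4.defn.1} can be enforced.

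Next we select the subset. The vertices of $\mathrm{P}$ other than $c$, ordered angularly around $p$, occupy an angular interval of total angle slightly more than $\pi$. Rotating a line around $p$ only changes the counts on each side by one at a time, and the vertices are in general position. So we can choose lines $L^-$ and $L^+$ through $p$, on either side of $V$ and slightly tilted, so that the upper half-plane parts above them on each side of $V$ contain prescribed numbers of vertices. We then delete surplus vertices of $X$ to make these counts exactly $2(n-1)(m+1)$ on each side. For the lower half-plane, the few vertices near $c$ that lie in the lower half-plane are grouped so that $4(m+1)$ of them lie above $L^+$ and at least $8n(m+1)+6m+9$ lie above $L^-$. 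This requires placing $p$ so that $H$ is slightly tilted, cutting off a controlled number of vertices adjacent to $c$ on one side; closeness to $c$ lets us tune this.

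Finally, the polygon $\mathrm{Q}$ is the boundary path through the kept vertices of $X$ in convex order, closed through $p$ and $c$ using the segment $pc$ along $V$. We must check that $\mathrm{Q}$ is simple with $p$ as its unique reflex vertex, and that $V\cap\mathrm{Q}$ is exactly the segment $[p,c]$. Simplicity should follow because $\mathrm{Q}$ is the convex hull of the kept vertices with a notch at $p$; the two edges from $p$ go to vertices angularly adjacent to $V$.

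The main obstacle is the simultaneous realization of the asymmetric lower-half-plane counts (iv) and the emptiness condition (v), using only the freedom in placing $p$ and the lines. If direct placement is too rigid, the fallback is to exploit the fact that subsets may be taken freely. In that case we choose $p$ near an edge midpoint, or near $c$ but off-center, so that the required large block $S$ sits on one side, and discard everything else.
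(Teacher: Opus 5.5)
\textbf{There is a genuine gap: the puncture is placed in the wrong spot.} The step that fails is putting $p$ very close to a vertex $c$.

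Consider the configuration the paper actually builds and later uses. In its proof, $L^+$ and $L^-$ bisect the vertices of the two lower open quadrants, so the lower cone above $L^-$ and the lower cone above $L^+$ lie on opposite sides of $V$. Then conditions (iii) and (iv) of Definition~\ref{PW2.sec.4.defn.1} place at least $4(m+1)$ vertices of $\mathrm{Q}$ in \emph{each} of the four open quadrants cut out by $H$ and $V$. Every open half-plane bounded by a line through the origin contains one of these quadrants entirely. So $p$ must be deep: every open half-plane bounded by a line through $p$ must contain at least $4(m+1)$ points of $X$.

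A point very close to $c$ is as shallow as possible. Once $p$ lies in the triangle spanned by $c$ and its two neighbours on $\partial\mathrm{P}$, the line through $p$ parallel to the segment joining those neighbours leaves $c$ alone on one side. Consequently, whichever line through $p$ you call $H$, the side containing $c$ only contains vertices from one side of $V$. That is exactly what your tilted $H$ ``cutting off a controlled number of vertices adjacent to $c$ on one side'' produces, and it is not the configuration required.

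Your own remark that almost all vertices lie on the side of $H$ away from $c$ already clashes with (iv). Condition (iv) needs at least $8n(m+1)+10m+13$ vertices besides $c$ on $c$'s side of $H$, far more than the $4(n-1)(m+1)$ on the other side, so ``the few vertices near $c$'' cannot supply them. The fallback (near an edge midpoint, or off-centre near $c$) fails for the same reason: any point close to $\partial\mathrm{P}$ has an open half-plane through it containing only $O(1)$ vertices.

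Separately, closing $\mathrm{Q}$ with $[p,c]$ as an edge is incompatible with $o$ being reflex and $V\cap\mathrm{Q}=[o,c]$. The reflex angle at $o$ would then contain the upward direction.

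\textbf{The missing idea is to place $p$ deep inside $\mathrm{P}$.} The paper does this as follows.
\begin{enumerate}
\item It takes four vertices of $\mathrm{P}$ splitting the boundary into arcs of roughly $|X|/4$ vertices each, and puts $p$ at the crossing of the two diagonals they span. After an affine map these diagonals lie on $H$ and $V$, so $c$ is the lower endpoint of the vertical one, and each open quadrant holds $|X|/4-O(1)$ vertices.
\item It chooses $L^+$ and $L^-$ through $p$ bisecting the vertices in the two lower quadrants. The four lower cones then each get $|X|/8-O(1)$ vertices, and each upper quadrant has at least one of its two cones equally full.
\item A reflection in $H$ or a quarter turn, with $L^\pm$ relabelled, makes all four cones required by Definition~\ref{PW2.sec.4.defn.1} contain a constant fraction of $X$.
\item Picking the prescribed numbers of vertices in these cones, together with $c$ and $p$, gives a polygon of $\mathcal{Q}$.
\end{enumerate}
Your remaining ingredients are compatible with that argument: passing to a subset of the vertices, rotating lines to adjust counts, and closing with a notch at $p$ whose edges go to the vertices angularly adjacent to the upper half of $V$. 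What has to change is the location of $p$.
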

\begin{proof}
Let us first consider four vertices of $\mathrm{P}$ that split its boundary into portions containing approximately the same number of vertices. We can then assume, by using an affine tranformation of $\mathbb{R}^2$ if needed, that $H$ and $V$ each contain two of these vertices while each of the four open orthants of $\mathbb{R}^2$ contain $|X|/4$ points from $X$ minus a number that is bounded independently from $|X|$. Further consider a line $L^+$ through the origin of $\mathbb{R}^2$ that splits the vertices of $\mathrm{P}$ in the open negative orthant in portions of about the same size. Likewise let $L^-$ be a line that splits the vertices of $\mathrm{P}$ contained in the lower right open orthant into two approximately equal subsets.

\begin{figure}[b]
\begin{centering}
\includegraphics[scale=1]{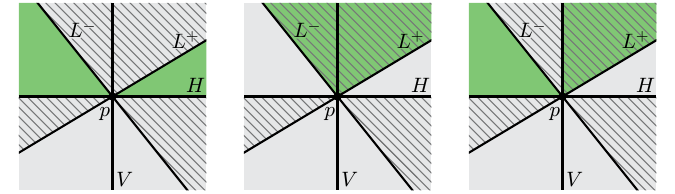}
\caption{$H$, $V$, $L^-$ and $L^+$ divide $\mathbb{R}^2$ into eight open cones. The green cones possibly contain too few vertices of $\mathrm{P}$.}\label{PW2.sec.4.fig.0}
\end{centering}
\end{figure}

Now the situation is as shown on Fig.~\ref{PW2.sec.4.fig.0}: the lines $H$, $V$, $L^-$ and $L^+$ define eight open cones and the four cones that are contained in the lower half-plane each contain at least $|X|/8$ points from $X$ minus a constant $r$ that is independent from $|X|$. Let us show that, up to some affine transformation of $\mathbb{R}^2$, the four cones that are striped in the figure contain at least $|X|/8-r$ points from $X$. The four open cones contained in the lower half-plane already satisfy this requirement and, as there are at least about $|X|/4$ points from $X$ in each of the orthants in the upper half-plane, at least two of the four open cones in the upper half-plane also satisfy it. The only three possibilities for how these cones are distributed up to symmetry are shown in Fig. \ref{PW2.sec.4.fig.0}. In the first situation on the left of the figure, the four striped cones satisfy our requirement. In the second situation in the center of the figure, it suffices to take the symmetric of $\mathbb{R}^2$ with respect to $H$ in order for the desired condition to be met and in the third one, it suffices to rotate $\mathbb{R}^2$ by a quarter turn  counter-clockwise. After these operations, $L^-$ and $L^+$ need to be relabeled appropriately.

Now that the two cones above $L^-$ and $L^+$ in the upper half-plane and the two cones above $L^-$ or $L^+$ in the lower half-plane contain a constant fraction of the vertices of $\mathrm{P}$ then, provided $X$ is large enough, we can pick $2(n-1)(m+1)$ vertices of $\mathrm{P}$ in the former two cones, $8n(m+1)+6m+9$ vertices of $\mathrm{P}$ in the cone above $L^-$ in the lower half-plane, and $4(m+1)$ vertices of $\mathrm{P}$ in the cone above $L^+$ in the lower half-plane. The resulting set of points, together with the vertex $c$ of $\mathrm{P}$ on $V$ in the lower half-plane, and the origin of $\mathrm{R}^2$ (that serves as the puncture $p$) is the vertex set of a polygon contained in $\mathcal{Q}$.
\end{proof}

Together with Theorem \ref{PW2.sec.4.lem.2}, Lemma \ref{PW2.sec.4.lem.1.5} will allow to prove Theorem \ref{PW2.sec.1.thm.0}. The following lemma will serve the same purpose for both Theorem~\ref{PW2.sec.1.thm.1} and Theorem~\ref{PW2.sec.1.thm.2}. In its statement, by a \emph{subpolygon} of $\mathrm{P}$, we mean a simple polygon whose vertex set is a subset of the vertex set of $\mathrm{P}$. 

\begin{lem}\label{PW2.sec.4.lem.1}
Consider a simple polygon $\mathrm{P}$ with vertex set $X$ and a single reflex vertex $o$. If $\mathrm{P}$ is balanced with respect to $o$ and $X$ is large enough, then some polygon in $\mathcal{Q}$ is a subpolygon of $\mathrm{P}$ up to an affine transformation of $\mathbb{R}^2$.
\end{lem}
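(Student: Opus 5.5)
The plan is to normalize the picture around $o$ and then choose the four lines $H$, $V$, $L^-$, $L^+$ of Definition~\ref{PW2.sec.4.defn.1} by angular counting arguments, as in the proof of Lemma~\ref{PW2.sec.4.lem.1.5}. The new features here are that the reflex vertex replaces the puncture and that balance replaces the freedom to place $p$. Once the lines are fixed, we keep only the required numbers of vertices in each cone and check that the resulting subpolygon lies in $\mathcal{Q}$.

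\textbf{Step 1: structural observation.} A simple polygon whose only reflex vertex is $o$ is star-shaped with respect to $o$. To see this, extend one of the two edges of $\mathrm{P}$ at $o$ into the interior until it hits the boundary. This cuts $\mathrm{P}$ into two polygons with no reflex vertex, hence two convex polygons, and both contain $o$. Consequently every vertex of $\mathrm{P}$ is visible from $o$, and the vertices of $\mathrm{P}$ other than $o$ appear in angular order around $o$ when one walks along the boundary. It follows that for any subset $Y$ of vertices containing $o$, the closed polygonal path visiting $Y$ in this angular order is simple and star-shaped from $o$. Its vertices other than $o$ are convex vertices, since each lies on one of the two convex chains and dropping vertices of a convex chain keeps it convex. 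We must keep two vertices: the neighbours of $o$ in $\mathrm{P}$, or at least vertices close to the two edge directions at $o$. This ensures $o$ remains the unique reflex vertex, because the exterior angle at $o$ (the wedge of angle less than $\pi$ not covered by $\mathrm{P}$ near $o$) then stays empty of the subpolygon. I expect this bookkeeping at the junction of the two convex chains to need care, but it is local.

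\textbf{Step 2: affine normalization.} Translate $o$ to the origin. Apply a linear map so that the exterior wedge at $o$ is contained in the lower half-plane and is split by the negative vertical ray. We then want a vertex $c$ such that $V\cap\mathrm{Q}$ is the segment $oc$. The roles of the half-planes in Definition~\ref{PW2.sec.4.defn.1} have to be matched with the picture (Fig.~\ref{PW2.sec.4.fig.1}): in $\mathrm{Q}$ the interior angle at $o$ contains the downward direction along $oc$. So we rotate instead so that some vertex $c$ lies on the negative part of $V$, is visible from $o$ (guaranteed by Step 1), and the exterior wedge lies on one side of $V$ in the lower half-plane. Choose $H$ so that it contains no vertex other than $o$.

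\textbf{Step 3: counting via balance.} If $s$ denotes the minimum number of vertices in an open half-plane bounded by a line through $o$, balance gives that every such half-plane contains between $s$ and $3s/2$ vertices. Since complementary half-planes together contain $|X|$ up to a bounded number, every open half-plane through $o$ contains at least roughly $2|X|/5$ vertices and at most roughly $3|X|/5$. Now choose $L^+$ and $L^-$ through $o$, rotating them continuously, so that the following hold.
\begin{itemize}
\item[(a)] The two upper cones above $L^\pm$, one on each side of $V$, each contain a linear fraction of $X$.
\item[(b)] The lower cone above $L^-$ contains at least a fraction large enough to host $8n(m+1)+6m+9$ vertices.
\item[(c)] The lower cone above $L^+$ contains $4(m+1)$ vertices or more.
\item[(d)] The cones below $L^\pm$ contain no vertex other than $c$.
\end{itemize}
Condition (d) is the delicate one. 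It is possible because vertices below both $L^\pm$ can simply be discarded when passing to a subpolygon, except that $c$ is kept. This is where I expect the main obstacle to be: I must show that the half-plane constraints coming from balance (with ratio $3/2$) are exactly what forces every relevant cone to receive a linear share of $X$ simultaneously, whatever the position of the exterior wedge. I would first try, as in Fig.~\ref{PW2.sec.4.fig.0}, a case analysis on which cones may be too small, using a reflection in $H$ or a rotation to reduce to a favourable case.

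\textbf{Step 4: conclusion.} For $|X|$ large, these linear shares exceed the fixed counts required by conditions (iii) and (iv) of Definition~\ref{PW2.sec.4.defn.1}. Select exactly those numbers of vertices, together with $o$, $c$, and the two vertices needed in Step 1 to keep $o$ reflex. By Step 1 the resulting polygon is a simple subpolygon of $\mathrm{P}$ with unique reflex vertex $o$. Conditions (i)--(v) of Definition~\ref{PW2.sec.4.defn.1} then hold by construction, so this subpolygon belongs to $\mathcal{Q}$ after the affine map.
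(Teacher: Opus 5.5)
\textbf{The core step is missing.} Step~3 lists the properties (a)--(d) the lines should have, but never shows they can hold simultaneously, and you say yourself that this is left open. That simultaneous achievability is the whole content of the lemma. Condition (d) is harmless, since vertices can be discarded. What must be proved is that (a), (b), and (c) hold together.

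The route you suggest for this, a case analysis using a reflection in $H$ or a rotation as in Lemma~\ref{PW2.sec.4.lem.1.5}, cannot work here, and this is tied to a second problem: your Step~2 normalization is backwards. Since $V\cap\mathrm{Q}$ is the segment from $o$ down to $c$, the upward ray of $V$ lies in the exterior wedge of $\mathrm{Q}$ at $o$. That wedge is the gap between the vertices of the two upper cones nearest to $V$. As $\mathrm{Q}\subseteq\mathrm{P}$, the exterior wedge of $\mathrm{P}$ must lie inside this gap. Hence it lies in the upper half-plane between the upper rays of $L^-$ and $L^+$, not in the lower half-plane, and $c$ must be roughly antipodal to the notch of $\mathrm{P}$.

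So $V$ is essentially imposed by $\mathrm{P}$, and the only freedom left is the reflection in $V$. The proof of Lemma~\ref{PW2.sec.4.lem.1.5} relies on reflecting in $H$ and on a quarter turn, and both destroy condition~(ii) here. Relatedly, keeping both neighbours of $o$ is not always possible. If no vertex of $\mathrm{P}$ lies in the open sector opposite its exterior wedge, the upward ray of $V$ falls inside the angular range of $\mathrm{P}$. The vertices between the notch and that ray, including a neighbour of $o$, must then be dropped. This costs nothing, since $o$ stays reflex in $\mathrm{Q}$ automatically.

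\textbf{What is needed.} You need a quantitative argument that, with the two upper cones forced to flank the notch and $c$ forced opposite it, the cones above $L^-$ and above $L^+$ in the lower half-plane still both receive linearly many vertices. This is exactly where the ratio $3/2$ enters, and with little room to spare. The paper does it explicitly, in these steps:
\begin{enumerate}
\item[(1)] It takes $c$ in $X\setminus\bigl(Y(k)\cup Y(-1)\bigr)$ for the smallest such $k$, with $k\leq l$, and derives $k\leq\nu/4+2$ from balance.
\item[(2)] It draws $L^-$ and $L^+$ through $\rho^{-r}(o)$ and $\rho^{k+r}(o)$ with $r=\lfloor\nu/9\rfloor$, so each upper cone receives about $\nu/9$ vertices.
\item[(3)] It chooses $H$ so that the upper half-plane contains $Y(k+r+s)\cap Y(-r-s)$, for the largest $s$ for which this intersection lies in an open half-plane bounded by a line through $o$.
\item[(4)] It bounds $s\leq23\nu/36-k/2+1$ by balance again.
\item[(5)] It concludes that each lower cone contains at least $5\nu/36-k/2\geq\nu/72-1$ vertices; the margin is only $5/36-1/8=1/72$.
\end{enumerate}
Without an argument of this kind, your Steps~1 and~4 are only bookkeeping around an unproved claim.
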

\begin{proof}
Denote by $\nu$ the smallest number of vertices of $\mathrm{P}$ in an open half-plane bounded by a line through $o$. Assuming that $\mathrm{P}$ is balanced, we have
\begin{equation}\label{PW2.sec.4.lem.1.eq.0}
\nu\geq\frac{2}{5}(|X|-2)\mbox{.}
\end{equation}

For any vertex $x$ of $\mathrm{P}$, denote by $\rho(x)$ the next vertex of $\mathrm{P}$ clockwise. If $k$ is a non-zero integer with absolute value at most $|X\mathord{\setminus}\{o\}|$, observe that $\rho(o)$ and $\rho^{-1}(o)$ must be contained in the same closed half-plane of $\mathbb{R}^2$ bounded by the line through $o$ and $\rho^k(o)$. Let $Y(k)$ be the intersection of $X$ with the open half-plane of $\mathbb{R}^2$ bounded by that line and that contains either $\rho(o)$ or $\rho^{-1}(o)$. Consider the smallest positive integers $k$ and $l$ such that the sets
$$
X\mathord{\setminus}\bigl(Y(k)\cup{Y(-1)}\bigr)
$$
and
$$
X\mathord{\setminus}\bigl(Y(-l)\cup{Y(1)}\bigr)
$$
are both non-empty. We will assume without loss of generality that $k$ is at most $l$ by, if needed, applying a central symmetry with respect to $o$ and we will denote by $c$ one of the points contained in the former set. 

By construction, if $k$ is at least $2$, then
\begin{equation}\label{PW2.sec.4.lem.1.eq.0.7}
X\mathord{\setminus}\bigl(Y(k-1)\cup{Y(1-l)}\bigr)=\emptyset
\end{equation}
as shown on the left of Fig.~\ref{PW2.sec.4.fig.4}. However, it follows from the definition of $\nu$ that there are at least $\nu$ points of $X$ in the set $X\mathord{\setminus}Y(k-1)$. According to (\ref{PW2.sec.4.lem.1.eq.0.7}), all of these points belong to $Y(1-l)$ and as a consequence,
\begin{equation}\label{PW2.sec.4.lem.1.eq.0.5}
\bigl|Y(1-l)\mathord{\setminus}Y(k-1)\bigr|\geq\nu\mbox{.}
\end{equation}

By symmetry, the same holds when exchanging $Y(k-1)$ and $Y(1-l)$. However, observe that the intersection of $Y(k-1)$ with $Y(1-l)$ contains exactly $k+l-4$ points. Therefore, by (\ref{PW2.sec.4.lem.1.eq.0.5}), the number of points contained in $Y(1-l)$ is at least $\nu+k+l-4$ and the same holds for $Y(k-1)$. As $\mathrm{P}$ is balanced with respect to $o$ and $k$ is not greater than $l$ it follows that
\begin{equation}\label{PW2.sec.4.lem.1.eq.1}
k\leq\frac{\nu}{4}+2\mbox{.}
\end{equation}

\begin{figure}[b]
\begin{centering}
\includegraphics[scale=1]{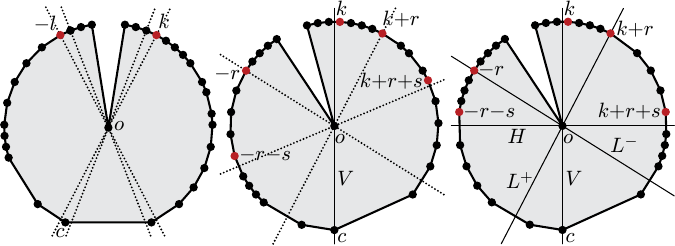}
\caption{The construction in Lemma \ref{PW2.sec.4.lem.1}. Vertices $\rho^i(o)$ are just labeled $i$ in order not to overburden the figure.}\label{PW2.sec.4.fig.4}
\end{centering}
\end{figure}

Note that when $k$ is equal to $1$, then this inequality is immediate.

Let us now assume that $\mathrm{P}$ is placed within $\mathbb{R}^2$ as shown in the center of Fig.~\ref{PW2.sec.4.fig.4} by if needed applying an affine transformation of $\mathbb{R}^2$. In particular, $o$ is placed at the origin of $\mathbb{R}^2$ and $c$ on $V$ in the lower half-plane. Denote
\begin{equation}\label{PW2.sec.4.lem.1.eq.2}
r=\biggl\lfloor\frac{\nu}{9}\biggr\rfloor\mbox{.}
\end{equation}

By (\ref{PW2.sec.4.lem.1.eq.1}) and (\ref{PW2.sec.4.lem.1.eq.2}), $k+2r-2$ is not greater than $\nu$. As a consequence, the intersection of $Y(k+r)$ and $Y(-r)$ is contained in an open half-plane whose boundary contains the origin. As this intersection contains $k+2r-2$ points while $Y(-r)$ contains at least $\nu$ points, it follows from (\ref{PW2.sec.4.lem.1.eq.2}) that
\begin{equation}\label{PW2.sec.4.lem.1.eq.4}
\bigl|Y(-r)\mathord{\setminus}Y(k+r)\bigr|\geq\frac{7\nu}{9}-k+2\mbox{.}
\end{equation}

By the same argument,
\begin{equation}\label{PW2.sec.4.lem.1.eq.5}
\bigl|Y(k+r)\mathord{\setminus}Y(-r)\bigr|\geq\frac{7\nu}{9}-k+2\mbox{.}
\end{equation}

Finally, let $s$ be the largest integer such that the intersection of $Y(k+r+s)$ and $Y(-r-s)$ is contained in an open half-plane bounded by a line through the origin of $\mathbb{R}^2$. We will assume without loss of generality that this half-plane is the upper half-plane by using if needed an affine transformation of $\mathbb{R}^2$ that fixes $V$ pointwise. Denote by $L^-$ the line through $o$ and $\rho^{-r}(o)$. Likewise, let $L^+$ be the line through $o$ and $\rho^{k+r}(o)$. The resulting situation is depicted on the right of Fig.~\ref{PW2.sec.4.fig.4}. The lines $H$, $V$, $L^-$, and $L^+$ cut $\mathbb{R}^2$ in eight open cones. We shall use the same strategy as in the proof of Lemma \ref{PW2.sec.4.lem.1.5}: it suffices to show that there is a constant fraction of the points in $X$ that lie in $Y(-r)\mathord{\setminus}Y(1)$, in $Y(k+r)\mathord{\setminus}Y(k+1)$, and in the intersections of $Y(-r)$ and $Y(k+r)$ with the open lower half-plane. When $X$ is large enough, this will allow to pick the right amount of points in each of these sets to serve as vertices of the desired subpolygon of $\mathrm{P}$, and to complete this set of vertices with $o$ and $c$. According to (\ref{PW2.sec.4.lem.1.eq.0}) and (\ref{PW2.sec.4.lem.1.eq.2}), each of the former two sets contains a constant fraction of the points in $X$ and we shall focus on the latter two.

By construction, the upper open half-plane now contains exactly $k+2(r+s)$ points of $X$. As in addition, $\mathrm{P}$ is balanced with respect to $o$,
$$
s\leq\frac{3\nu}{4}-\frac{k}{2}-r
$$
and combining this with (\ref{PW2.sec.4.lem.1.eq.2}) yields
\begin{equation}\label{PW2.sec.4.lem.1.eq.6}
s\leq\frac{23\nu}{36}-\frac{k}{2}+1\mbox{.}
\end{equation}

However, the upper half-plane contains exactly $s+1$ points of $Y(-r)\mathord{\setminus}Y(k+r)$ and $s+1$ points of $Y(k+r)\mathord{\setminus}Y(-r)$. Hence by (\ref{PW2.sec.4.lem.1.eq.4}), (\ref{PW2.sec.4.lem.1.eq.5}), and  (\ref{PW2.sec.4.lem.1.eq.6}), the number of points of $Y(-r)$ and $Y(k+r)$ in the lower open half-plane is at least
$$
\frac{5\nu}{36}-\frac{k}{2}
$$

This can be lower bounded according to (\ref{PW2.sec.4.lem.1.eq.1}) as
$$
\frac{5\nu}{36}-\frac{k}{2}\geq\frac{\nu}{72}-1\mbox{.}
$$

Therefore, by (\ref{PW2.sec.4.lem.1.eq.0}), at least a constant fraction of the points from $X$ in the lower half-plane belong to $Y(-r)$ and to $Y(k+r)$, as desired.
\end{proof}

Using Lemmas \ref{PW2.sec.4.lem.1.5} and \ref{PW2.sec.4.lem.1} jointly with Theorem~\ref{PW2.sec.4.lem.2}, we can now establish the three statements that we have announced in the introduction.


\begin{proof}[Proof of Theorems \ref{PW2.sec.1.thm.0}, \ref{PW2.sec.1.thm.1}, and \ref{PW2.sec.1.thm.2}]
First consider a simple polygon $\mathrm{P}$ with vertex set $X$. Under the assumptions of Theorem \ref{PW2.sec.1.thm.2}, it follows from Lemma~\ref{PW2.sec.4.lem.1} that $\mathrm{P}$ admits, up to an affine transformation of $\mathbb{R}^2$, a polygon $\mathrm{Q}$ in $\mathcal{Q}$ as a subpolygon. Similarly, if $\mathrm{P}$ is a convex polygon and $p$ a puncture of $\mathrm{P}$ that satisfy the requirements of Theorem \ref{PW2.sec.1.thm.1} then applying Lemma~\ref{PW2.sec.4.lem.1} after cutting away from $\mathrm{P}$ a triangle incident to $p$ and to an edge of $\mathrm{P}$ provides a polygon $\mathrm{Q}$ from $\mathcal{Q}$ whose vertices form a subset of $X\cup\{p\}$ up to an affine transformation of $\mathbb{R}^2$. Lastly, if $\mathrm{P}$ is a convex polygon then under the requirement from Theorem \ref{PW2.sec.1.thm.0} that $\mathrm{P}$ has sufficiently many vertices, it follows from Lemma \ref{PW2.sec.4.lem.1.5} that one can place a puncture $p$ in $\mathrm{P}$ in such a way that some polygon $\mathrm{Q}$ from $\mathcal{Q}$ has for its vertices a subset of $X\cup\{p\}$ up to an affine transformation of $\mathbb{R}^2$.

In all three cases, cutting $\mathrm{Q}$ away from $\mathrm{P}$ results in a (possibly empty) collection of simple polygons. Consider a (possibly empty) set $A$ of arcs that collectively triangulate all the polygons in this collection and denote by $\mathcal{G}$ the subgraph induced in $\mathcal{F}(\mathrm{P},Z)$ by the triangulations that admit $A$ as a subset, where $Z$ is equal to $X$ when $\mathrm{P}$ is non-convex and to $X\cup\{p\}$ when $\mathrm{P}$ is convex. By construction, there is a natural isomorphism
$$
\varphi:\mathcal{F}(\mathrm{Q},Z\cap\mathrm{Q})\rightarrow\mathcal{G}
$$
that sends $T$ to $A\cup{T}$. According to Theorem~\ref{PW2.sec.4.lem.2}, there exists an arc $\varepsilon$ and a geodesic path in $\mathcal{F}(\mathrm{Q},Z\cap\mathrm{Q})$ that starts and ends in the subgraph $\mathcal{F}_\varepsilon(\mathrm{Q},Z\cap\mathrm{Q})$ but is not entirely contained in that subgraph. If $\mathcal{G}$ is strongly convex in $\mathcal{F}(\mathrm{P},Z)$ then $\varphi$ sends that path to a geodesic path in $\mathcal{F}(\mathrm{P},Z)$ that starts and ends in $\mathcal{F}_\varepsilon(\mathrm{P},Z)$ but is not entirely contained in that subgraph, which proves the theorem. If however, $\mathcal{G}$ is not strongly convex in $\mathcal{F}(\mathrm{P},Z)$, then there exists an arc $\varepsilon$ in $A$ such that $\mathcal{F}_\varepsilon(\mathrm{P},Z)$ is not strongly convex in $\mathcal{F}(\mathrm{P},Z)$.
\end{proof}

\begin{figure}[b]
\begin{centering}
\includegraphics[scale=1]{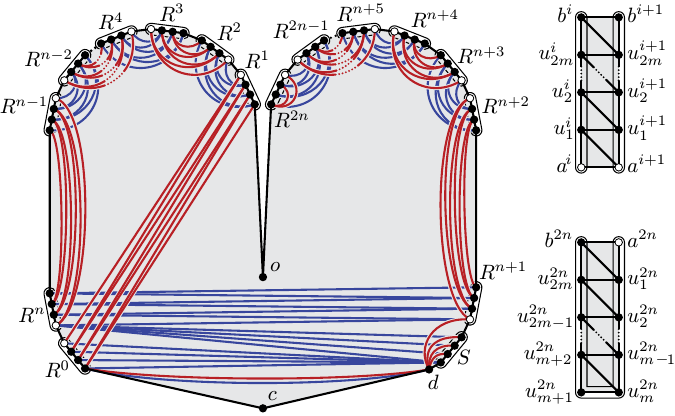}
\caption{The triangulations $T^-$ and $T^+$.}\label{PW2.sec.4.fig.3}
\end{centering}
\end{figure}

We now fix a polygon $\mathrm{Q}$ from $\mathcal{Q}$ and describe the two triangulations $T^-$ and $T^+$ of $\mathrm{Q}$ that will allow to prove Theorem~\ref{PW2.sec.4.lem.2}. These two triangulations are sketched in Fig. \ref{PW2.sec.4.fig.3}, glued along their boundary and thus forming a triangulated sphere. In the figure, $T^-$ is shown below $T^+$ and not all the interior arcs of these triangulations are represented for the sake of clarity. Let us describe $T^-$ and $T^+$ in more details. In $T^-$, all the subpolygons with vertex set $R^i\cup{R^{i+1}}$ where $i$ is odd are triangulated using the zigzag triangulation shown top right on Fig.~\ref{PW2.sec.4.fig.3}. The interior arcs of a zigzag triangulation form a simple path that alternates between left and right turns. Note that the two extremities of that path are $b^i$ and $a^{i+1}$ while $a^i$ and $b^{i+1}$ are not incident to it. Similary, the subpolygons with vertex set $R^i\cup{R^{i+1}}$ where $i$ is even are triangulated in $T^+$ using the same zigzag triangulation. The subpolygon with vertex set $R^{2n}$ will be further triangulated in $T^+$ with the zigzag triangulation shown bottom right on Fig. \ref{PW2.sec.4.fig.3} whose alternating simple path starts at $b^{2n}$, ends at $u^{2n}_m$ and does not touch $a^{2n}$ or $u^{2n}_{m+1}$. A key point of this construction is that the triangulations of the subpolygons with vertex set $R^i\cup{R^{i+1}}$ contained in $T^-$ and in $T^+$, together with the triangulation of the subpolygon with vertex set $R^{2n}$ in $T^+$ form a long ribbon that alternates between the bottom and the top of the triangulated sphere. This ribbon is unfolded on the left of Fig. \ref{PW2.sec.6.fig.1} and one can see that most of the vertices that it contains are incident to six arcs.

Let us now complete our description of $T ^-$. First, we connect the vertex $a^n$ (the last vertex of $R^n$ counter-clockwise around $\mathrm{Q}$) to every vertex in $S$ by an arc of $T^-$. Likewise, the last vertex of $S$ clockwise around $\mathrm{Q}$ (which does not have a label) is connected to every vertex in $R^0$ by an arc of $T^-$. Moreover there is an arc of $T^-$ between $b^0$ (the last vertex of $R^0$ counter-clockwise around $\mathrm{Q}$) and $d$. In order to complete this set of arcs into a triangulation of $\mathrm{Q}$, there remains to triangulate the subpolygon of $\mathrm{Q}$ with vertices $o$ and $b^1$ to $b^{2n}$ (which is untriangulated in Fig. \ref{PW2.sec.4.fig.3}): we do this by adding to $T^-$ an arc between $o$ and each $b^i$. These arcs are not shown in the figure for readability.

In order to complete our description of $T^+$, we add to the zigzag triangulations an arc between $d$ and every vertex in $S$ except for the one that is consecutive to $d$ which is already connected to $d$ by an edge of $\mathrm{Q}$. We also add an arc between $d$ and  $a^{n+1}$ (the last vertex of $R^{n+1}$ clockwise around $\mathrm{Q}$), and the same arc between $b^0$ and $d$ as in $T^-$ (the two arcs are not shown on top of one another in the figure as we think of them as arcs with the same pair of vertices embedded in a sphere). There remains to triangulate the subpolygon with vertices $a^0$ to $a^n$ and the subpolygon with vertices $o$, $d$, $b^0$, $b^1$, $b^{2n}$, and $a^{n+1}$ to $a^{2n}$. Note that both of these subpolygons are untriangulated in Fig.~\ref{PW2.sec.4.fig.3} for readability. We triangulate the latter subpolygon within $T^+$ by adding arcs incident to $o$ and whose other extremities are $b^0$, $d$, and $a^{n+1}$ to $a^{2n}$. For the former subpolygon, we use any triangulation of it within $T^+$ as our result will not depend on this choice. We can bound the distance of $T^-$ and $T^+$ in $\mathcal{F}(\mathrm{Q},X)$ as follows, where $X$ stands, from now on, for the vertex set of $\mathrm{Q}$.

\begin{lem}\label{PW2.sec.4.lem.3}
The distance of $T^-$ and $T^+$ in $\mathcal{F}(\mathrm{Q},X)$ is at most
$$
|S|+8n(m+1)+4m+6\mbox{.}
$$
\end{lem}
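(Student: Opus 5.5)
We prove the upper bound by exhibiting an explicit flip path from $T^-$ to $T^+$ of the announced length. The bound $|S|+8n(m+1)+4m+6$ should be read as one flip for every interior arc of $T^+$, since we aim for a path in which each flip introduces an arc of $T^+$ that is never removed again. Let us count these arcs. The zigzag triangulations of the subpolygons $R^i\cup R^{i+1}$ with $i$ even are $n$ zigzags with $4(m+1)-3$ interior arcs each. The zigzag of $R^{2n}$ has $2(m+1)-3$ interior arcs. The arcs from $d$ to $S$ number $|S|-1$, and the arc $da^{n+1}$ adds one more. The fan from $o$ to $b^0$, $d$ and $a^{n+1},\dots,a^{2n}$ contributes $n+2$ arcs. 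The triangulation of the polygon $a^0,\dots,a^n$ has $n-2$ interior arcs. Finally, there are the arcs bounding the zigzag pieces, namely $a^ia^{i+1}$ and $b^ib^{i+1}$ for $i$ even, together with the arc $a^{2n}b^{2n}$. We will check, possibly with one or two extra flips, that the total matches $|S|+8n(m+1)+4m+6$. The arc $b^0d$ is common to both triangulations and is never flipped.

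The order of the flips is as follows.

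First, we work on the portion of $T^-$ around $S$. The arcs from $a^n$ to $S$ and from the last vertex of $S$ to $R^0$ form two fans. Flipping the fan at $a^n$ one arc at a time, starting from the arc adjacent to $d$'s side, turns it into the fan at $d$; each of these flips introduces an arc $dx$ with $x\in S$. This sweeping continues through the fan on $R^0$ and eventually produces $da^{n+1}$. In doing so, the arcs between $R^0$ and $R^{2n}$ that should appear in $T^+$ get introduced along the way. The goal is that each flip creates an arc of $T^+$.

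Next, we handle the ribbon. For each odd $i$, we flip the zigzag of $R^i\cup R^{i+1}$ in $T^-$ and the fan from $o$ to the $b^j$'s into the zigzags of $R^j\cup R^{j+1}$ with $j$ even, working from one end of the ribbon to the other. The fan at $o$ is progressively transferred from the $b^j$'s to the $a^j$'s of the upper half, for $j$ from $n+1$ to $2n$. Each zigzag-to-zigzag conversion across a quadrilateral strip costs exactly one flip per arc introduced, because the zigzags are shifted by one pair of vertices. The last step triangulates $R^{2n}$ by its zigzag, together with the polygon $a^0,\dots,a^n$, whose triangulation is arbitrary. Since any triangulation of a convex polygon can be reached from a fan by introducing each of its arcs once, this last polygon also costs one flip per arc.

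The main obstacle is validity. We must check that every intended flip acts on a convex quadrilateral of $\mathrm{Q}$, a polygon with the reflex vertex $o$ and the lines $L^\pm$ constraining it. Conditions (iii)–(v) of Definition~\ref{PW2.sec.4.defn.1} are designed to guarantee the needed visibilities and convexities. We would first try ordering the flips so that, at each step, the quadrilateral involved lies in one of the cones where $\mathrm{Q}$ is locally convex, avoiding $o$ except for the fan flips. For those fan flips, convexity at $o$ follows from the $b^j$'s and $a^j$'s lying on the appropriate side of $H$.
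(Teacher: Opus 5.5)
There is a genuine gap, and it lies in your basic premise rather than in the validity checks you defer. The bound is not ``one flip per interior arc of $T^+$''. Since $|X|=|S|+2(2n+1)(m+1)+3$, the triangulation $T^+$ has only $|S|+2(2n+1)(m+1)$ interior arcs. That is $4n(m+1)+2m+4$ less than $|S|+8n(m+1)+4m+6$, not one or two less.

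More seriously, no path in which every flip introduces an arc of $T^+$ that is never removed can exist. Along such a path, every arc ever present belongs to $T^-\cup T^+$, and the arc with vertices $a^n$ and $d$ belongs to neither. Now suppose a flip exchanges an arc $a^ns$ of $T^-$ with an arc $dy$ of $T^+$. The endpoints of the two diagonals alternate around the flipped quadrilateral, so $a^nd$ is one of its sides and must be present at that moment (this is the observation made right after Lemma~\ref{PW2.sec.5.lem.1}). Hence, in your path, each of the at least $|S|-1$ interior arcs of $T^-$ joining $a^n$ to $S$ would be removed by a distinct flip introducing an arc of $T^+$ \emph{not} incident to $d$. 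There are at most $2(2n+1)(m+1)$ such arcs, whereas $|S|\geq 8n(m+1)+4m+6$ by Definition~\ref{PW2.sec.4.defn.1}, which is a contradiction.

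In particular, your first phase, which turns the fan at $a^n$ into the fan at $d$ while keeping $b^0d$, cannot start without first creating the auxiliary arc $a^nd$. Your claim that zigzag conversions cost one flip per introduced arc is also unfounded. The zigzags of $T^-$ live on $R^i\cup R^{i+1}$ with $i$ odd, and those of $T^+$ live on the overlapping subpolygons with $i$ even; they are not shifted copies inside a common strip. The volume argument of Section~\ref{PW2.sec.6} in fact shows that, without repeated arcs, the ribbon forces roughly twice as many tetrahedra as it has arcs in $T^+$.

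The missing idea is to meet in an intermediate triangulation instead of going directly. The paper uses the triangulation $T^\star$ whose interior arcs are the arcs of $T^+$ joining $d$ to $S$ and to $a^{n+1}$, with all other interior arcs incident to $c$.

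From $T^+$: since $\mathrm{Q}$ is star-shaped with respect to $c$, one reaches $T^\star$ in $2(2n+1)(m+1)$ flips, each introducing an arc incident to $c$.

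From $T^-$, one performs the following flips in order:
\begin{enumerate}
\item flip $b^0d$, which creates the arc from $c$ to the last vertex $x$ of $S$ clockwise;
\item flip the other arcs at $R^0$ toward $c$;
\item flip $cx$, which creates $\eta=a^nd$;
\item flip the $|S|$ arcs at $S$ in counter-clockwise order, converting the fan at $a^n$ into the fan at $d$;
\item flip $\eta$, which creates $ca^{n+1}$;
\item flip the remaining $4n(m+1)-1$ arcs toward $c$.
\end{enumerate}
This uses $|S|+4n(m+1)+2m+4$ flips, and adding the two counts gives exactly the bound.
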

\begin{proof}
It suffices to describe paths in $\mathcal{F}(\mathrm{Q},X)$ from $T^-$ and $T^+$ that end in a same triangulation $T^\star$ and whose lengths sum to the desired bound.

Starting with $T^-$, we flip the arc incident to $d$ and $b^0$, which introduces the arc $\varepsilon$ incident to $c$ and to the vertex $x$ of $S$ that comes last clockwise around $\mathrm{Q}$. Then, we flip all the remaining interior arcs of $T^-$ that are incident to a vertex in $R^0$, in the clockwise order of these vertices around $\mathrm{Q}$. Note that each of these flips introduces an arc incident to $c$. Further flipping $\varepsilon$ in the resulting triangulation introduces the arc $\eta$ with vertices $d$ and $a^n$. We sketch the triangulation $T$ obtained from that sequence of flips in Fig. \ref{PW2.sec.4.fig.2} together with (on top) the interior arcs of $T^+$ incident to $d$ and whose other extremity is $a^{n+1}$ or a vertex of $S$. One can see that all of these arcs of $T^+$ can be introduced in $T$ using a sequence of $|S|$ flips: it suffices to flip the arcs of $T$ incident to a vertex in $S$, in the counter-clockwise order of these vertices around $\mathrm{Q}$. We further flip $\eta$, which introduces the arc with vertices $c$ and $a^{n+1}$.

\begin{figure}
\begin{centering}
\includegraphics[scale=1]{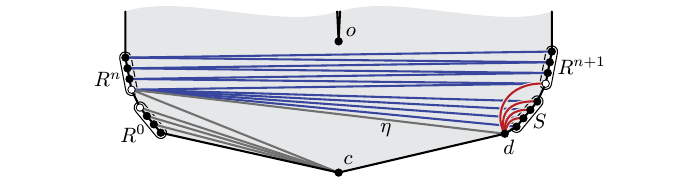}
\caption{The intermediate triangulation $T$.}\label{PW2.sec.4.fig.2}
\end{centering}
\end{figure}

Since $\mathrm{Q}$ is star-shaped with respect to $c$, we can flip all of the remaining $4n(m+1)-1$ arcs of $T^-$ in such a way that each flip introduces an arc incident to $c$, resulting in a triangulation $T^\star$ of $\mathrm{Q}$ whose interior arcs are all incident to either $c$ or $d$. The interior arcs of $T^\star$ incident to $d$ are precisely the ones in $T^+$ between $d$ and either $a^{n+1}$ or a vertex in $S$. Counting the flips that we have performed shows that the distance of $T^-$ and $T^\star$ in $\mathcal{F}(\mathrm{Q},X)$ is at most
\begin{equation}\label{PW2.sec.4.lem.3.eq.1}
|S|+4n(m+1)+2m+4\mbox{.}
\end{equation}

Now observe that $T^+$ can also be changed into $T^\star$ by flipping the arc incident to $d$ and $b^0$ and then all the other interior arcs of $T^+$ that are not incident to $d$ in such a way that each flip introduces an arc incident to $c$. As the number of interior arcs in $T^+$ that are not incident to $d$ is $2(2n+1)(m+1)-1$, this shows that the distance between $T^+$ and $T^\star$ in $\mathcal{F}(\mathrm{Q},X)$ is at most
$$
2(2n+1)(m+1)\mbox{.}
$$

Summing this with (\ref{PW2.sec.4.lem.3.eq.1}) completes the proof.
\end{proof}

\section{An application of the decomposition lemma}\label{PW2.sec.5}

In this section and the next, we consider a polygon $\mathrm{Q}$ from $\mathcal{Q}$ and denote by $X$ its vertex set. We further consider the two triangulations $T^-$ and $T^+$ of $\mathrm{Q}$ that we have built in Section \ref{PW2.sec.4} and a blow-up triangulation $K$ of $\mathrm{Q}$ that corresponds to a geodesic path from $T^-$ to $T^+$ in $\mathcal{F}(\mathrm{Q},X)$.

We aim at showing that, if $n$ and $m/n$ are both large enough, then $K$ contains two different arcs with the same pair of vertices, which will prove Theorem \ref{PW2.sec.4.lem.2}. From now on, we assume for contradiction that there is at most one arc of $K$ incident to any two vertices of $\mathrm{Q}$. As $T^-$ and $T^+$ both contain the arc with vertices $b^0$ and $d$, this implies that $K$ is pinched along a triangle with vertices $c$, $b^0$, and $d$. In other words, the union of the faces of $K$ is a topological ball to which a triangle incident to $c$ has been glued along an arc with vertices $b^0$ and $d$. Hence that triangle can be cut away from $K$ without changing the number of tetrahedra in $K$. The first step of our proof of Theorem \ref{PW2.sec.4.lem.2} is to further cut $K$ into smaller blow-up triangulations. This will be done via our decomposition lemma for a topological circle whose image by $\pi$ is the triangle with vertices $a^n$, $a^{n+1}$, and $d$. In order to do that, let us first remark that, along the path between $T^-$ and $T^+$ built in the proof of Lemma~\ref{PW2.sec.4.lem.3}, one of the triangulations contains the arc with extremities $a^n$ and $d$. In fact, by the following lemma, this is the case of every geodesic path between $T^-$ and $T^+$.

\begin{lem}\label{PW2.sec.5.lem.1}
Some flip exchanges an arc of $T^-$ incident to $a^n$ and an arc of $T^+$ incident to $d$ along any geodesic path between $T^-$ and $T^+$ in $\mathcal{F}(\mathrm{Q},X)$.
\end{lem}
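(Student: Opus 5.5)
The plan is a counting argument: compare the number of arcs that any path from $T^-$ to $T^+$ must destroy and create with the upper bound of Lemma~\ref{PW2.sec.4.lem.3}.

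\textbf{Setup.} Every triangulation of $\mathrm{Q}$ with vertex set $X$ has $N=|X|-3$ interior arcs, since $\mathrm{Q}$ is simple and $X$ contains no puncture. Here
$$
|X|=|S|+2(2n+1)(m+1)+3,
$$
counting $o$, $c$, $d$ and the sets $R^0,\ldots,R^{2n}$. The only interior arc common to $T^-$ and $T^+$ should be the arc with vertices $b^0$ and $d$; I will check this from the description of both triangulations. Fix a geodesic path $(T_i)_{0\leq i\leq k}$ from $T^-$ to $T^+$. Every interior arc of $T^-\mathord{\setminus}T^+$ is removed by at least one flip, and I mark the first flip that removes it (a \emph{removal event}). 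Every interior arc of $T^+\mathord{\setminus}T^-$ is introduced by at least one flip, and I mark the last flip that introduces it (an \emph{introduction event}). A flip removes exactly one arc and introduces exactly one arc, so it carries at most one event of each kind. Call a flip \emph{paired} when it carries both kinds. Then
$$
k\geq 2(N-1)-P,
$$
where $P$ is the number of paired flips.

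\textbf{Two arc classes and the bound on $P$.} Let $A$ be the set of interior arcs of $T^-$ incident to $a^n$; this includes the arcs from $a^n$ to the vertices of $S$ not adjacent to it along the boundary. Let $B$ be the set of interior arcs of $T^+\mathord{\setminus}T^-$ incident to $d$, namely the arcs from $d$ to $S$ (except the boundary neighbour of $d$) and to $a^{n+1}$. Both classes have size $|S|+O(1)$, and I will compute the constants exactly. Assume, for contradiction, that no flip of the path removes an arc of $A$ while introducing an arc of $B$. Then every paired flip involves either a removal event outside $A$ or an introduction event outside $B$. Hence
$$
P\leq\bigl((N-1)-|A|\bigr)+\bigl((N-1)-|B|\bigr),
$$
which gives $k\geq|A|+|B|$.

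\textbf{Contradiction.} Combining with Lemma~\ref{PW2.sec.4.lem.3} yields
$$
|A|+|B|\leq|S|+8n(m+1)+4m+6,
$$
that is, roughly $|S|\leq 8n(m+1)+4m+O(1)$. Condition (iv) in Definition~\ref{PW2.sec.4.defn.1} forces $|S|$ to be at least about $8n(m+1)+6m+O(1)$, which contradicts this for every $m$ once the constants are checked (with $m=0$ being the tightest case). So some flip exchanges an arc of $A$ with an arc of $B$, and this is exactly a flip exchanging an arc of $T^-$ incident to $a^n$ with an arc of $T^+$ incident to $d$.

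\textbf{Main obstacle.} The argument is an exact bookkeeping exercise, and the hard part is making the constants come out. I need precise values of $|A|$, $|B|$ and $|S|$ (including the boundary-neighbour exclusions and the common arc $b^0d$), and I must confirm that the slack $2m+3$ between condition (iv) and Lemma~\ref{PW2.sec.4.lem.3} really yields a strict inequality. If the count is off by a constant, the fallback is to enlarge $A$ or $B$ by a few arcs, for instance arcs of $T^-$ at $a^n$ inside the zigzags, or by using the precise number of vertices of $S$ from condition (iv).
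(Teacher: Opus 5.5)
Your strategy is the paper's. Under the assumption, the flips that remove the arcs of $A$ and the flips that introduce the arcs of $B$ are pairwise distinct, since each flip removes one arc and introduces one. So the path has length at least $|A|+|B|$, and this is played against Lemma~\ref{PW2.sec.4.lem.3}. Your removal/introduction-event bookkeeping with $k\geq 2(N-1)-P$ reaches the same inequality by a detour. It also forces you to check that $b^0d$ is the only common interior arc, whereas all you actually need is $A\cap T^+=\emptyset$ and $B\cap T^-=\emptyset$.

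\textbf{The closing step is wrong as written.}
\begin{itemize}
\item Condition (iv) of Definition~\ref{PW2.sec.4.defn.1} does not give $|S|\geq 8n(m+1)+6m+O(1)$. The vertices of $\mathrm{Q}$ in the lower half-plane above $L^-$ are those of $S$, together with $d$ and the $2(m+1)$ vertices of $R^{n+1}$. So you only get $|S|\geq 8n(m+1)+4m+6$, as stated right after Fig.~\ref{PW2.sec.4.fig.1}.
\item The slack of $2m+3$ you rely on therefore does not exist. The contradiction needs $|A|+|B|\geq 2|S|+1$ exactly, and estimates of the form $|S|+O(1)$ are not enough.
\item The arcs you list explicitly give at most $2|S|$: those from $a^n$ to $S$, those from $d$ to $S$ except its boundary neighbour, and the arc from $d$ to $a^{n+1}$. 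That only yields $|S|\leq 8n(m+1)+4m+6$, which is no contradiction.
\end{itemize}

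\textbf{How to close the argument.}
\begin{itemize}
\item $a^n$ has no boundary neighbour in $S$: its neighbours along the boundary of $\mathrm{Q}$ are $a^0$ and a vertex of $R^n$. So all $|S|$ arcs from $a^n$ to $S$ are interior.
\item $A$ also contains the arc $\alpha$ with vertices $a^n$ and $a^{n+1}$. It is an edge of the zigzag subpolygon on $R^n\cup R^{n+1}$ in $T^-$, and it is not in $T^+$ because it crosses the arc of $T^+$ from $o$ to $b^0$.
\item $B$ also contains the arc with vertices $o$ and $d$, which comes from the fan at $o$ in $T^+$. It is not in $T^-$, whose only interior arc at $d$ is the one ending at $b^0$.
\end{itemize}
Then $|A|=|B|=|S|+1$, and $2|S|+2\leq|S|+8n(m+1)+4m+6$ contradicts $|S|\geq 8n(m+1)+4m+6$. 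The paper counts $2|S|+1$ arcs, which is exactly the minimum that works.
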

\begin{proof}
Consider a path from $T^-$ to $T^+$ in $\mathcal{F}(\mathrm{Q},X)$ and assume that no flip along that path exchanges one of the arcs of $T^-$ incident to $a^n$ with an arc of $T^+$ incident to $d$. In that case, the flips that remove each of the interior arcs of $T^-$ incident to $a^n$ and the flips that introduce each of the interior arcs of $T^+$ incident to $d$ (other than the one whose other extremity is $b^0$ which already belongs to $T^-$) are all distinct. As there are $2|S|+1$ such arcs in total that need to be removed from $T^-$ or introduced in $T^+$, this implies that the considered path has length at least $2|S|+1$. However, by Definition \ref{PW2.sec.4.defn.1},
$$
|S|\geq8n(m+1)+4m+6\mbox{.}
$$

Hence, the length of the considered path is at least
$$
|S|+8n(m+1)+4m+7
$$
and by Lemma \ref{PW2.sec.4.lem.3}, this path cannot be a geodesic in $\mathcal{F}(\mathrm{Q},X)$.
\end{proof}

Consider a quadrilateral whose diagonals are an arc of $T^-$ incident to $a^n$ and an arc of $T^+$ incident to $d$. Observe that any such quadrilateral admits an edge $\varepsilon$ with extremities $a^n$ and $d$. Hence if a flip exchanges an arc of $T^-$ incident to $a^n$ with an arc of $T^+$ incident to $d$ in a triangulation of $\mathrm{Q}$, then that triangulation must contain $\varepsilon$. In particular, as $K$ corresponds to a geodesic path between $T^-$ and $T^+$ in $\mathcal{F}(\mathrm{Q},X)$, it follows from Lemma \ref{PW2.sec.5.lem.1} that it contains an arc $\beta$ with extremities $a^n$ and $d$. However, $K$ also contains an arc $\alpha$ with vertices $a^n$ and $a^{n+1}$ that is not above any face of $K$ and an arc $\gamma$ with vertices $d$ and $a^{n+1}$ that is not below any face of $K$. This is because $T^-$ and $T^+$ contain arcs with these pairs of vertices. The union of $\alpha$, $\beta$, and $\gamma$ is a topological circle and we shall apply our decomposition lemma to that circle.

\begin{lem}\label{PW2.sec.5.lem.2}
$K$ contains a triangle whose boundary is $\alpha\cup\beta\cup\gamma$.
\end{lem}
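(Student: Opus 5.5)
We plan to apply Lemma~\ref{PW2.sec.3.lem.5} to the circle $C=\alpha\cup\beta\cup\gamma$. Following the labeling convention of Section~\ref{PW2.sec.3}, the vertex not on $\alpha$ is $d$, so $a$ in that section is $d$ here. The vertex not on $\beta$ is $a^{n+1}$, playing the role of $b$. The vertex not on $\gamma$ is $a^n$, playing the role of $c$. The polygon $\mathrm{Q}$ has no puncture, so $X$ lies in its boundary as the lemma requires, and $K$ corresponds to a geodesic path.

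Lemma~\ref{PW2.sec.3.lem.5} then yields a triangle in $S_K(\beta)$ whose image by $\pi$ is bounded by the triangle with vertices $a^n$, $a^{n+1}$, and $d$. That triangle has vertices $a^n$, $a^{n+1}$, and $d$, and it contains $\beta$. By our standing assumption that at most one arc of $K$ joins any two vertices, its other two edges must be $\alpha$ and $\gamma$. Hence its boundary is exactly $\alpha\cup\beta\cup\gamma$, as claimed.

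\textbf{Condition (i).} We use Remark~\ref{PW2.sec.3.rem.1}. The arc $\alpha$ belongs to $T^-$, so it lies in $\Pi_0$ and nothing of $K$ is below it. Likewise $\gamma$ belongs to $T^+$, lies in $\Pi_k$, and nothing is above it. Thus $\alpha$ cannot be above $\beta$ or $\gamma$, and $\gamma$ cannot be below $\alpha$ or $\beta$, in the transitive closure. The relaxed hypothesis of the remark therefore holds, and by Proposition~\ref{PW2.sec.2.prop.2} we may discard faces below $\alpha$ or above $\gamma$ if needed.

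\textbf{Conditions (ii) and (iii).} This is the main step, and we would check it geometrically. Here $\mathrm{P}^\gamma$ is the subpolygon cut off by the segment $da^{n+1}$; it contains $o$, $c$, $b^0$ to $b^{2n}$, and $a^{n+1}$ to $a^{2n}$. Next, $\mathrm{P}^\alpha$ is the small convex chain between $a^n$ and $a^{n+1}$, possibly reduced to the single arc. Finally, $\mathrm{P}^\beta$ is the polygon formed by $d$, $S$, and $R^0,\dots,R^n$ up to $a^n$.

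For a lower penetrating tetrahedron, Proposition~\ref{PW2.sec.3.prop.3} says its base $\tau$ has no vertex in $\mathrm{P}^\alpha\setminus\{a^{n+1}\}$. If the tetrahedron is above $\beta$, its base lies in $\mathrm{P}^\gamma$. Otherwise its base avoids $\mathrm{P}^\alpha$ beyond $a^{n+1}$ and lies on the $\mathrm{P}^\beta\cup\mathrm{P}^\gamma$ side. We must show that the convex hull of $\pi(\tau)\cup\{a^n\}$ lies in $\mathrm{Q}$ and contains no other vertex of $X$. The hard part is the reflex vertex $o$ and the convexity of the quadrilateral. We would argue that $a^n$ sees every point of $\mathrm{Q}$ on the relevant side. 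The vertices of $\mathrm{Q}$ lie in convex position apart from $o$, and the lines $L^\pm$ with conditions (i)--(v) of Definition~\ref{PW2.sec.4.defn.1} should keep $o$ out of the hull, since $o$ lies on the segment side visible from $a^n$. We would also check that $\pi(\tau)$ together with $a^n$ forms a convex quadrilateral. This relies on $\tau$ sharing the edge structure with the penetrating arc: the penetrating arc crosses $\pi(\beta)$ or $\pi(\alpha)$, which forces $a^n$ to lie opposite $\tau$ across the appropriate edge.

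Symmetrically, for upper penetrating tetrahedra we would pull to $d$. The point $d$ is adjacent to $c$ and sees most of the lower part of $\mathrm{Q}$, which should handle the bases inside $\mathrm{P}^\alpha\cup\mathrm{P}^\beta$.

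We expect the visibility verification around $o$ to be the main obstacle, particularly for bases with vertices among the $b^i$ near $o$. We would handle it case by case, according to which side of $\pi(\beta)$ the base lies on.
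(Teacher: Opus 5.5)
Your reduction is the same as the paper's. The matching $a=d$, $b=a^{n+1}$, $c=a^n$ is right, and condition (i) holds because $\alpha\subset\Pi_0$ and $\gamma\subset\Pi_k$. The assumption that no two arcs of $K$ share a vertex pair then turns the conclusion of Lemma~\ref{PW2.sec.3.lem.5} into a triangle bounded by $\alpha\cup\beta\cup\gamma$. However, conditions (ii) and (iii) are the whole substance of the lemma, and you leave them as expectations built on a wrong picture of the three pieces. From the way $T^-$ and $T^+$ are constructed, the boundary of $\mathrm{Q}$ runs counter-clockwise through $o,R^1,\ldots,R^n,R^0,c,d,S,R^{n+1},\ldots,R^{2n}$. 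Hence $\pi(\alpha)$ is a diagonal passing below $o$:
\begin{itemize}
\item[$\bullet$] $\mathrm{Q}^\alpha$ is the large piece containing $o$ and $R^1,\ldots,R^{2n}$;
\item[$\bullet$] $\mathrm{Q}^\beta$ has vertex set $R^0\cup\{a^n,c,d\}$;
\item[$\bullet$] $\mathrm{Q}^\gamma$ has vertex set $S\cup\{d,a^{n+1}\}$.
\end{itemize}
Once this is seen, (ii) is immediate and involves no visibility issue at $o$. By Proposition~\ref{PW2.sec.3.prop.3}, the base of a lower penetrating tetrahedron has no vertex in $\mathrm{Q}^\alpha\setminus\{a^{n+1}\}$. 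So the base, together with $a^n$, lies in the convex subpolygon with vertex set $R^0\cup S\cup\{c,d,a^n,a^{n+1}\}$, which meets $X$ only in its vertices.

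For (iii), visibility from $d$ cannot be the argument. The base of an upper penetrating tetrahedron only avoids $\mathrm{Q}^\gamma\setminus\{a^{n+1}\}$. So a priori it may have a vertex in some $R^i$ with $1\leq i\leq n-1$, in the arm of $\mathrm{Q}$ across the notch at $o$ from $d$. In that case the convex hull of the base with $d$ is not contained in $\mathrm{Q}$. What must be shown is that such tetrahedra do not occur in $K$, and this needs an ingredient missing from your proposal. The paper argues as follows:
\begin{itemize}
\item[$\bullet$] To penetrate the circle, such a tetrahedron must be above $\alpha$ and below $\beta$.
\item[$\bullet$] It would therefore meet the interior of any triangle of $K$ bounded by $\beta$ and by an arc of $T^-$ joining $a^n$ to a vertex of $S$.
\item[$\bullet$] Lemma~\ref{PW2.sec.5.lem.1} guarantees that $K$ contains such a triangle, a contradiction. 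That lemma rests on the distance bound of Lemma~\ref{PW2.sec.4.lem.3} and on the geodesicity of the path.
\end{itemize}
Without this argument, or some substitute exploiting the specific structure of geodesics from $T^-$ to $T^+$, condition (iii) of Lemma~\ref{PW2.sec.3.lem.5} remains unproved.
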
 
\begin{proof}
Our assumption that $K$ does not contain two different arcs with the same pair of vertices implies that a triangle in $K$ whose image by $\pi$ admits $\pi(\alpha\cup\beta\cup\gamma)$ as its boundary must be incident to $\alpha$, $\beta$, and $\gamma$. Therefore, by Lemma \ref{PW2.sec.3.lem.5}, it suffices to show that every lower and upper tetrahedron of $K$ that penetrates $\alpha\cup\beta\cup\gamma$ can be pulled to $a^n$ and $d$, respectively. Recall that, by Proposition~\ref{PW2.sec.3.prop.3}, these tetrahedra only depend on $\alpha$, $\beta$, and $\gamma$. 

First consider a lower tetrahedron $\sigma$ of $K$ that penetrates $\alpha\cup\beta\cup\gamma$ and denote by $\tau$ its base. According to Proposition~\ref{PW2.sec.3.prop.3}, $\tau$ does not have a vertex in $\mathrm{Q}^\alpha\mathord{\setminus}\{a^{n+1}\}$. As a consequence, the convex hull of $\pi(\tau)\cup\{a^n\}$ is contained in the subpolygon of $\mathrm{Q}$ with vertex set $R^0\cup{S}\cup\{c,d,a^n,a^{n+1}\}$. As this subpolygon is convex and its intersection with $X$ is precisely its vertex set, $\sigma$ can be pulled to $a^n$ in the sense of Definition \ref{PW2.sec.3.defn.3}, as desired.

Now consider an upper tetrahedron $\sigma$ of $K$ that penetrates $\alpha\cup\beta\cup\gamma$. Denote by $\tau$ the base of $\sigma$ and assume for contradiction that the convex hull of $\pi(\tau)\cup\{d\}$ is not contained in $\mathrm{Q}$ or contains a point from $X$ that is not one of its vertices. Note that this can only happen when $\tau$ has a vertex in $R^i$ where $i$ is positive and at most $n-1$. In order for $\sigma$ to penetrate $\alpha\cup\beta\cup\gamma$, that tetrahedron must then be above $\alpha$ and below $\beta$. In particular, the interior of $\sigma$ is non-disjoint from the interior of any triangle incident to $\beta$ and to an arc that admits $a^n$ and a point from $S$ as its vertices but is not above any face of $K$. Hence, $\sigma$ cannot coexist in $K$ with such a triangle. As by Lemma~\ref{PW2.sec.5.lem.1}, $K$ must contain a triangle exactly like that one, the convex hull of $\pi(\tau)\cup\{d\}$ is contained in $\mathrm{Q}$ and its intersection with $X$ is equal to its vertex set, as desired.
\end{proof}

\begin{figure}[b]
\begin{centering}
\includegraphics[scale=1]{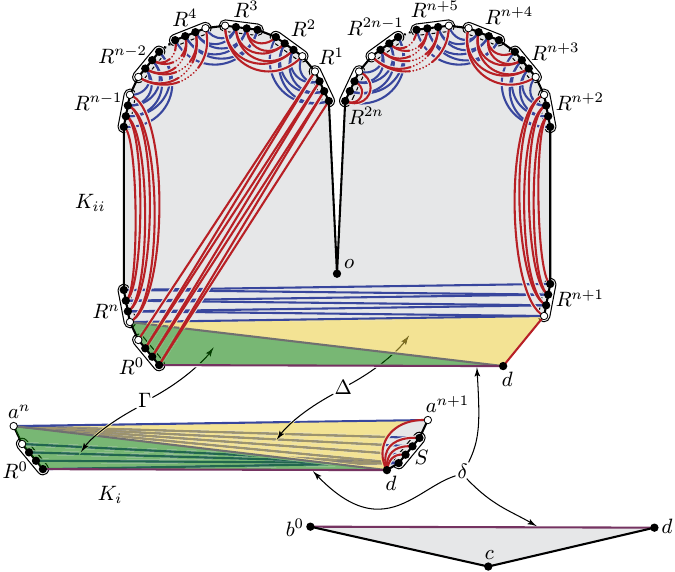}
\caption{A decomposition of $K$.}\label{PW2.sec.5.fig.1}
\end{centering}
\end{figure}

Lemma \ref{PW2.sec.5.lem.2} makes it possible to apply the decomposition strategy already used in \cite{PourninWang2021}. Denote by $\Delta$ the triangle of $K$ with boundary $\alpha\cup\beta\cup\gamma$ provided by this lemma. Further denote by $\delta$ the arc of $K$ with vertices $b^0$ and $d$ and recall that the union of the faces of $K$ is a $3$-dimensional topological ball with a triangle glued along $\delta$. Consider a (necessarily geodesic) path $(T_i)_{0\leq{i}\leq{k}}$ in $\mathcal{F}(\mathrm{Q},X)$ from $T^-$ to $T^+$ that corresponds to $K$ and the family $(\Pi_i)_{0\leq{i}\leq{k}}$ of topological disks that Proposition~\ref{PW2.sec.2.prop.1} associates with $K$ and $(T_i)_{0\leq{i}\leq{k}}$. Let $q$ be any index such that $\beta$ is contained in $\Pi_q$ and denote by $\Gamma$ the set of the triangles of $K$ contained in $\Pi_q$ that lie between $\beta$ and $\delta$.

Now let us cut $K$ along $\delta$, $\Delta$, and all the triangles in $\Gamma$. This results in the triangle and the two blow-up triangulations $K_i$ and $K_{ii}$ sketched in Fig. \ref{PW2.sec.5.fig.1}. Denoting by $\mathrm{Q}_i$ the subpolygon of  $\mathrm{Q}$ with vertex set
$$
\{a^n,a^{n+1},d\}\cup{R^0}\cup{S}\mbox{,}
$$
$K_i$ is a blow-up triangulation of $\mathrm{Q}_i$ while $K_{ii}$ is a blow-up triangulation of the subpolygon $\mathrm{Q}_{ii}$ of $\mathrm{Q}$ whose vertex set is obtained from that of $\mathrm{Q}$ by removing $c$ and all the vertices contained in $S$. Fig. \ref{PW2.sec.5.fig.1} does not show which triangles are in $\Gamma$ as we do not know which they are. One can see that the $|S|+2(m+1)$ arcs of $K_i$ that are not above any face of $K_i$ and whose image by $\pi$ is not an edge of $\mathrm{Q}_i$ are below one of the two arcs of $K_i$ incident to $d$ and whose other vertex is $a^n$ or $a^{n+1}$. As a consequence, $K_i$ contains at least $|S|+2(m+1)$ tetrahedra. Indeed a tetrahedron in $K_i$ cannot be above and incident to two different arcs of $K_i$. However, according to Lemma \ref{PW2.sec.4.lem.3}, $K$ contains at most
$$ 
|S|+8n(m+1)+4m+6
$$
tetrahedra. Moreover, by construction, the set of the tetrahedra of $K$ is the disjoint union of the sets of the tetrahedra of $K_i$ and $K_{ii}$. We immediately obtain the following bound as a consequence of these observations.

\begin{prop}\label{PW2.sec.5.prop.1}
$K_{ii}$ contains at most $8n(m+1)+2m+4$ tetrahedra.
\end{prop}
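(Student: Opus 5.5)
The bound is a subtraction. The text just before the statement already supplies both ingredients: an upper bound on the number of tetrahedra in $K$, and a lower bound on the number of tetrahedra in $K_i$. So the plan is to make each ingredient precise and subtract.

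First, recall how $K$ splits. Cutting $K$ along $\delta$, along the triangle $\Delta$ given by Lemma~\ref{PW2.sec.5.lem.2}, and along the triangles of $\Gamma$ produces three pieces: a dangling triangle incident to $c$ glued along $\delta$, which contains no tetrahedron, and the two blow-up triangulations $K_i$ and $K_{ii}$. Cutting along triangles never subdivides a tetrahedron. Hence every tetrahedron of $K$ lies in exactly one of $K_i$ and $K_{ii}$, and
$$
|K^{(3)}|=|K_i^{(3)}|+|K_{ii}^{(3)}|,
$$
where $K^{(3)}$ denotes the set of tetrahedra.

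Next, $K$ comes from a geodesic path from $T^-$ to $T^+$, and its tetrahedra are in bijection with the flips of that path by Proposition~\ref{PW2.sec.2.prop.1}. Lemma~\ref{PW2.sec.4.lem.3} therefore gives
$$
|K^{(3)}|\leq|S|+8n(m+1)+4m+6.
$$

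The main step, and the only one that needs care, is the lower bound $|K_i^{(3)}|\geq|S|+2(m+1)$. Consider the bottom surface of $K_i$. It is the part of $T^-$ inside $\mathrm{Q}_i$, namely the fan at $a^n$ over $S$, the fan at the last vertex of $S$ over $R^0$, and the arc from $b^0$ to $d$. Its interior arcs, together with the arcs of this bottom surface lying along the cut, number $|S|+2(m+1)$. None of them occurs in the top surface of $K_i$, which is made of the arcs incident to $d$ together with $\beta$ and $\gamma$. So each of these arcs must be removed by some flip, that is, it must be the lower edge of some tetrahedron of $K_i$ that lies directly above it.

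A single tetrahedron has exactly one bottom diagonal, the arc removed by its flip. Therefore two distinct such arcs cannot both be the lower incident arc of the same tetrahedron. This gives an injection from the $|S|+2(m+1)$ arcs into the tetrahedra of $K_i$.

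I expect the delicate point to be the arc count: I need to check that exactly $|S|+2(m+1)$ arcs of the bottom surface of $K_i$ are interior to $\mathrm{Q}_i$ and are absent from its top surface, taking into account the arcs incident to $a^n$, $a^{n+1}$ and $d$ on the boundary of $\mathrm{Q}_i$.

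Finally, subtract:
$$
|K_{ii}^{(3)}|\leq|S|+8n(m+1)+4m+6-|S|-2(m+1)=8n(m+1)+2m+4,
$$
which is the bound claimed in Proposition~\ref{PW2.sec.5.prop.1}.
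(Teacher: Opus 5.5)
Your proof matches the paper's. The tetrahedra of $K$ split disjointly between $K_i$ and $K_{ii}$, and Lemma~\ref{PW2.sec.4.lem.3} bounds their total. Each of the $|S|+2(m+1)$ interior arcs of the bottom of $K_i$ needs its own tetrahedron, since a tetrahedron is above and incident to only one arc. These are the $|S|$ arcs of the fan at $a^n$ and the $2(m+1)$ arcs of the fan at the last vertex of $S$; that is exactly the number of vertices of $\mathrm{Q}_i$ minus $3$. Two details need tidying:

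\begin{itemize}
\item Do not add any arcs ``along the cut''. Arcs such as $\alpha$ or $\delta$ are boundary arcs of $\mathrm{Q}_i$ and lie in both the bottom and the top of $K_i$, so they must not be counted.
\item The top of $K_i$ also contains the unspecified triangles of $\Gamma$, not just arcs incident to $d$. This is harmless: the image by $\pi$ of every counted arc crosses $\pi(\beta)$ or $\pi(\gamma)$, so the arc lies below $\beta$ or $\gamma$, as the paper notes.
\end{itemize}
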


\section{A hyperbolic volume argument}\label{PW2.sec.6}

In this section, we embed the boundary of the blow-up triangulation $K_{ii}$ constructed in Section \ref{PW2.sec.5} in the $3$-dimensional hyperbolic space $\mathbb{H}^3$ as the boundary of an ideal polytope $\Xi$ of large volume. In particular, the number of ideal tetrahedra required to triangulate $\Xi$ will be larger than the estimate provided by Proposition \ref{PW2.sec.5.prop.1}, contradicting our assumption that two different arcs of $K$ never have the same pair of vertices and thus proving Theorem \ref{PW2.sec.4.lem.2}.

\begin{figure}
\begin{centering}
\includegraphics[scale=1]{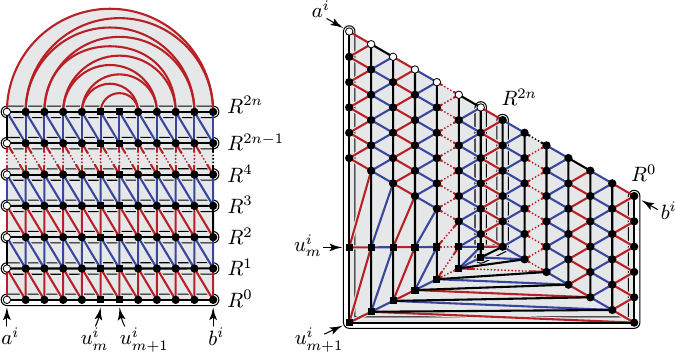}
\caption{The ribbon in the boundary of $K$ (left) and its embedding in $\mathbb{H}^3$ (right). Only $R^0$ and $R^{2n}$ are marked on the right in order not to overburden the figure.}\label{PW2.sec.6.fig.1}
\end{centering}
\end{figure}

We describe our embedding of the boundary of $K_{ii}$ in $\mathbb{H}^3$ using the Poincar{\'e} upper half-space model of $\mathbb{H}^3$, denoting by $P_\infty$ and $p_\infty$ the plane and the point at infinity, respectively. Recall that the boundary of $K$ contains the ribbon that is unfolded on the left of Fig.~\ref{PW2.sec.6.fig.1}. By construction, this ribbon is also contained in the boundary of $K_{ii}$ and we begin by embedding it in $\mathbb{H}^3$. The vertices of that ribbon are placed in $P_\infty$ as shown on the right of Fig.~\ref{PW2.sec.6.fig.1}. The arcs and triangles of the ribbon incident to these vertices are embedded as half-circles and spherical triangles in the upper half-space and one can think of this representation as a view from below $P_\infty$ when looking in direction of $p_\infty$. Note that, except for the triangles incident to $u^i_m$ or $u^i_{m+1}$ (which are marked with squares in the figure), all the triangles in the embedded ribbon are equilateral, which will be instrumental when estimating the volume of $\Xi$. The points $u_m^0$ to $u_m^{2n}$ and $u_{m+1}^0$ to $u_{m+1}^{2n}$ are placed along two straight lines but the precise placement of these points is not important as long as the orthogonal projections on $P_\infty$ of distinct triangles do not have overlapping interiors. In particular we will assume in the following, by adjusting the slope of these two lines, that the distance from $u^0_m$ to $u^0_{m-1}$ and $u^0_{m+1}$ is fixed independently from $n$.

\begin{figure}
\begin{centering}
\includegraphics[scale=1]{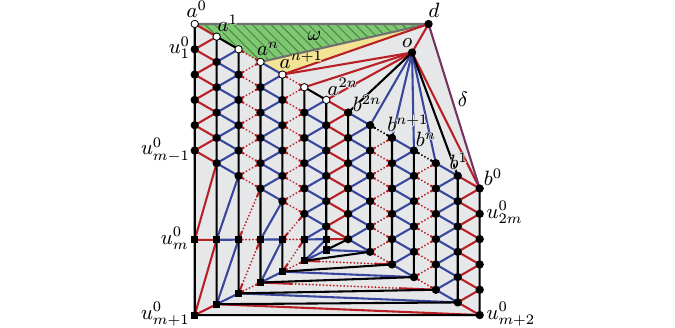}
\caption{The lower faces of $\Xi$.}\label{PW2.sec.6.fig.2}
\end{centering}
\end{figure}

In order to complete embedding the boundary of $K_{ii}$ within $\mathbb{H}^3$, there remains to place the vertices $o$ and $d$. These points will be placed in $P_\infty$ as sketched in Fig.~\ref{PW2.sec.6.fig.2}. Here, again, the precise placement of $o$ and $d$ within $P_\infty$ is not too important, except for the requirement that the triangles shown in the figure do not overlap in their interior. In particular, we will assume that both $o$ and $d$ are placed in the plane $P_\infty$ close to the interior of the line segment with vertices $a^{2n}$ and $b^{2n}$ in such a way that the line through $d$ and $a^n$ in  that plane intersects the interior of the line segment with extremities $a^0$ and $u_1^0$. The announced hyperbolic ideal polytope $\Xi$ has for its vertices our embedding of
$$
\bigcup_{i=0}^{2n}R^i\cup\{d,o\}
$$
in $P_\infty$. Its edges and triangular faces are the half-circles and spherical triangles in the upper half-space with the same vertices as the arcs and triangles in the boundary of $K_{ii}$. Let us describe the facial structure of that polytope in order to verify that its boundary is indeed a topological sphere.

Fig.~\ref{PW2.sec.6.fig.2} shows the triangular faces of $\Xi$ that can be seen from below $P_\infty$, which we refer to as the \emph{lower} faces of $\Xi$. It should be noted that $\Xi$ may have a lower face with vertices $d$, $a^0$, and $a^n$ (striped in the figure) depending on whether the corresponding triangle lies or not in the boundary of $K_{ii}$. Indeed recall that $K_{ii}$ was obtained by cutting $K$ along $\delta$, $\Delta$, and an unspecified set $\Gamma$ of triangles that forms a triangulation of a topological disk with vertex set
$$
R^0\cup\{a^n,d\}\mbox{.}
$$

If there is a triangle $\omega$ with vertices $d$, $a^0$, and $a^n$ in $\Gamma$, that triangle corresponds to the lower face of $\Xi$ that is striped in Fig.~\ref{PW2.sec.6.fig.2}. In that case, the orthogonal projection of $\Xi$ back on $P_\infty$ is a pentagon with vertices $a^0$, $u^0_{m+1}$, $u^0_{m+2}$, $b^0$, and $d$. If however, there is no triangle with vertices $d$, $a^0$, and $a^n$ in $\Gamma$, then, by our assumption that the line in $P_\infty$ through $d$ and $a^n$ intersects the interior of the line segment with extremities $a^0$ and $u_1^0$, the orthogonal projection of $\Xi$ on $P_\infty$ is a simple, non-convex hexagon that admits $a^n$ as its only reflex vertex. In both cases, $\Xi$ has an edge with vertices $a^0$ and $a^n$ which, together with the edges with vertices $a^{i-1}$ and $a^i$ when $i$ ranges from $1$ to $n$, bounds a triangulated disk orthogonal to $P_\infty$ within the upper half-space. The way this disk is triangulated corrresponds to how the polygon with vertices $a^0$ to $a^n$ is triangulated in $T^+$ (which we did not specify). The faces of $\Xi$ that correspond to the triangles in $\Gamma$ (except $\omega$ if that triangle belongs to $\Gamma$) form the rest of the boundary of $\Xi$. Some of these faces may be orthogonal to $P_\infty$ which happens precisely when their three vertices belong to either
$$
\{a^0, u^0_1,\ldots,u^0_{m+1}\}
$$
or
$$
\{u^0_{m+1}\ldots, u^0_{2m},b^0\}\mbox{.}
$$

All the other faces of $\Xi$ will be called its \emph{upper} faces and since the points in $R^0\cup\{a^n,d\}$ have been placed in $P_\infty$ in the same clockwise order as in $\mathrm{Q}$, the projections of the upper faces of $\Xi$ on $P_\infty$ form a triangulation.

It follows from this description that the hyperbolic volume of $\Xi$ is the difference between the volume of the portion of $\mathbb{H}^3$ above its lower faces and the volume of the portion of $\mathbb{H}^3$ above its upper faces. In order to estimate this volume as a function of $n$ and $m$, let us recall that the hyperbolic volume of an ideal tetrahedron $\sigma$ with vertices $p_\infty$, $x$, $y$, and $z$ is
\begin{equation}\label{PW2.sec.6.eq.0}
\mathrm{vol}(\sigma)=\lob(\theta_x)+\lob(\theta_y)+\lob(\theta_z)
\end{equation}
where $\theta_x$, $\theta_y$, and $\theta_z$ are the three internal angles of the triangle with vertices $x$, $y$, and $z$ in $P_\infty$ and $\lob$ stands for Milnor's Lobachevsky function \cite{Milnor1982}:
$$
\lob(\theta)=-\int_0^\theta\log|2\sin u|du\mbox{.}
$$

Moreover $\mathrm{vol}(\sigma)$ is maximal if and only if $x$, $y$, and $z$ are the vertices of an equilateral triangle in the plane at infinity. In other words,
\begin{equation}\label{PW2.sec.6.eq.1}
\mathrm{vol}(\sigma)\leq3\lob\biggl(\frac{\pi}{3}\biggr)
\end{equation}
with equality when $x$, $y$, and $z$ are the vertices of an equilateral triangle.

\begin{lem}\label{PW2.sec.6.lem.1}
Consider a positive number $\epsilon$. If $n$ is large enough, then
$$
\mathrm{vol}(\Xi)\geq3\bigl(2(4n+1)(m-1)-\epsilon{m}-2\bigr)\lob\biggl(\frac{\pi}{3}\biggr)\mbox{.}
$$
\end{lem}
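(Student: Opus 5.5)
We will bound $\mathrm{vol}(\Xi)$ from below using the decomposition stated just before the lemma. The volume of $\Xi$ is the volume of the region of $\mathbb{H}^3$ above its lower faces minus the volume of the region above its upper faces. For each lower face with ideal vertices $x,y,z$ in $P_\infty$, the region above it is the ideal tetrahedron with vertices $p_\infty,x,y,z$, whose volume is given by (\ref{PW2.sec.6.eq.0}). The regions above the upper faces are treated the same way. Since the projections of the upper faces triangulate a polygon with vertex set $R^0\cup\{a^n,d\}$, the subtracted quantity is a sum of tetrahedral volumes over a triangulation of a polygon with a bounded number of $P_\infty$-directions. We then only need a lower bound on the lower part and an upper bound on the upper part.

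For the lower part, we count the equilateral triangles of the embedded ribbon. The ribbon has $2n+1$ pieces: the $2n$ zigzag strips between $R^i$ and $R^{i+1}$, with $4(m+1)-2$ triangles each, and the strip on $R^{2n}$. On each side of the squares at $u^i_m,u^i_{m+1}$, all triangles are equilateral. Discarding the $O(1)$ non-equilateral triangles per strip near the square columns and near the two ends, we expect at least $2(4n+1)(m-1)$ equilateral lower faces, up to an additive constant. Each contributes exactly $3\lob(\pi/3)$ by the equality case of (\ref{PW2.sec.6.eq.1}). All remaining lower faces, namely the square-marked triangles and the triangles incident to $o$ and $d$ in Fig.~\ref{PW2.sec.6.fig.2}, contribute nonnegatively, so we drop them. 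The exact count is the step to verify carefully against Fig.~\ref{PW2.sec.6.fig.1}. The per-strip loss must be absorbed either by the $-2$ in the statement or by $\epsilon m$.

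For the upper part, we use the triangulation of the projection of $\Xi$ on $P_\infty$, which is a pentagon or a hexagon with reflex vertex $a^n$. It has at most $2(m+1)+2$ vertices, hence at most $2m+O(1)$ triangles, and each contributes at most $3\lob(\pi/3)$. This crude bound would cost about $2m\cdot3\lob(\pi/3)$, which is too much. The main obstacle is therefore to show that the upper contribution is only $\epsilon m\cdot 3\lob(\pi/3)+O(1)$.

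The idea for the obstacle is that, as $n$ grows, the points of $R^0$ lie along two nearly collinear segments, namely $a^0,u^0_1,\dots,u^0_{m+1}$ and $u^0_{m+1},\dots,b^0$. Faces with all three vertices on one segment are vertical and contribute nothing. Every other upper face has two vertices close to one line while its third vertex lies far away, at $d$, at $a^n$, or on the other segment. Its projection is then a very thin triangle, with angles tending to $0,0,\pi$ or with one angle near $0$. Because $\lob$ is continuous with $\lob(0)=\lob(\pi)=0$, such a face contributes at most some $\eta(n)\to0$. The exceptions are a bounded number of faces near $u^0_{m+1}$, $a^0$, and $b^0$, and these cost $O(1)$. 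The distance of $u^0_m$ to its neighbours is fixed independently of $n$, while the ribbon geometry forces $a^n$ and $d$ to be at distance growing with $n$. Given $\epsilon$, we choose $n$ so that $\eta(n)\cdot(2m+O(1))\le\epsilon m\cdot3\lob(\pi/3)$, absorbing the $O(1)$ exceptional faces into the constant. Subtracting this from the lower bound yields the claimed inequality. If uniformity in $m$ of the thinness fails, the fallback is to note that the angles at the far vertex of each fan sum to at most $\pi$, and to use concavity of $\lob$ on $[0,\pi/2]$ to bound the total by a quantity that is $o(m)$.
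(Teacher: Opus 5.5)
There is a genuine gap in your upper bound on the volume above the upper faces; your lower-face estimate matches the paper.

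\textbf{The lower count.} It is exact, not ``up to an additive constant''. In each of the $2n$ zigzag strips exactly six triangles, and in the triangulation of $R^{2n}$ exactly two, are incident to some $u^i_m$ or $u^i_{m+1}$. So precisely $2(4n+1)(m-1)$ lower faces are equilateral, and the per-strip loss is already built into the factor $m-1$. There is no slack left for any further additive loss, because the $-2$ in the statement is used up entirely by the upper faces.

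\textbf{The gap.} You claim that all but $O(1)$ upper faces project to thin triangles. Your justification (a base of bounded length, an apex at distance growing with $n$) only covers faces whose two vertices on a chain $R^0_a=\{a^0,\dots,u^0_{m+1}\}$ or $R^0_b=\{u^0_{m+2},\dots,b^0\}$ are consecutive. Since $\Gamma$ is an arbitrary triangulation, it may contain faces $(u^0_j,u^0_k,x)$ with $k-j$ comparable to the distance from $x$ to the chain, which is of order $n$. Such a face can be nearly equilateral, with volume close to $3\lob(\pi/3)$. When $m$ is large compared with $n$, which is the regime where the lemma is used, there can be of order $m/n$ such faces. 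So they are not a bounded set of exceptions. Your fallback does not repair this. The apex angles sum to at most $\pi$ only apex by apex, there are up to $m+2$ apices, and a face whose apex angle is not small has no reason to have small volume.

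\textbf{The missing step: subdivide before estimating.} Suppose $u^0_j,\dots,u^0_k$ are collinear and $\tau=(u^0_j,u^0_k,x)$. Then the region of $\mathbb{H}^3$ above $\tau$ lies in the union of the regions above the fan triangles $\tau_i=(u^0_i,u^0_{i+1},x)$, $j\le i<k$. To see this, take the power of a point with respect to the circle through $u^0_j,u^0_k,x$ minus its power with respect to the circle through $u^0_i,u^0_{i+1},x$. This difference is affine, vanishes at $x$, and is nonpositive at $u^0_i$ and $u^0_{i+1}$. Hence over $\tau_i$ the larger hemisphere lies above the smaller one.

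Consequently the volume above $\tau$ is at most the sum of the volumes above the $\tau_i$. Each $\tau_i$ has a base of bounded length, so its apex angle tends to $0$ uniformly in $m$. Its volume is then at most $\epsilon\lob(\pi/3)$ for $n$ large, by uniform continuity together with $\lob(\pi-y)=-\lob(y)$. Note that continuity and $\lob(0)=\lob(\pi)=0$ alone do not suffice when only one angle is small. Because the upper faces project to a triangulation, distinct faces use disjoint unit segments of the chains, so there are at most $2m$ such terms.

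\textbf{The exceptional faces.} You need at most two of them, not merely $O(1)$, to match the constant in the statement. At most one upper face has exactly one vertex in each of $R^0_a$ and $R^0_b$, since two such faces would overlap in projection. At most one upper face has at most one vertex in $R^0$, since such a face must contain the edge $a^nd$. Together these give $v^+\le(6+2\epsilon m)\lob(\pi/3)\le 3(2+\epsilon m)\lob(\pi/3)$, as required.
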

\begin{proof}
By construction, $2(4n+1)(m-1)$ lower faces of $\Xi$ are equilateral. Hence, the volume $v^-$of the portion of $\mathbb{H}^3$ above the lower faces of $\Xi$ satisfies
$$
v^-\geq6(4n+1)(m-1)\lob\biggl(\frac{\pi}{3}\biggr)
$$
and since the hyperbolic volume of $\Xi$ is obtained by subtracting to $v^-$ the volume $v^+$ of the portion of $\mathbb{H}^3$ above its upper faces, it suffices to show that $v^+$ is at most $3(\epsilon m+2)\lob(\pi/3)$ when $n$ is large enough.

Recall that the vertices of all the upper faces of $\Xi$ belong to
$$
R^0\cup\{a^n,d\}
$$
and note that at most one upper face of $\Xi$ has exactly one vertex in
$$
R^0_a=\{a^0, u^0_1,\ldots,u^0_{m+1}\}
$$
and exactly one vertex in
$$
R^0_b=\{u^0_{m+2}\ldots, u^0_{2m},b^0\}\mbox{.}
$$

Indeed the orthogonal projections of any two such triangles on $P_\infty$ have non-disjoint interiors whereas we know that projecting the upper triangles of $\Xi$ orthogonally on $P_\infty$ results a triangulation. Likewise there is at most one upper triangle of $\Xi$ with at most one vertex in $R^0$ as such a triangle must be incident to the edge of $\Xi$ with extremities $a^n$ and $d$. Hence, all the upper triangles of $\Xi$ except at most two have two vertices in $R^0_a$ or two vertices in $R^0_b$.

Consider an upper face $\tau$ of $\Xi$ incident to two points $u_j^0$ and $u_k^0$ from $R^0_a$ and to a point $x$ in $R^0_b\cup\{a^n,d\}$. We assume that $j$ is less than $k$ and for simplicity that $u^0_0$ refers to $a^0$. Observe that the volume of the portion of $\mathbb{H}^3$ above $\tau$ is not greater than the volume of the portion of $\mathbb{H}^3$ above the triangles $\tau_i$ with vertex set $\{u_i^0,u_{i+1}^0,x\}$ where $i$ ranges from $j$ to $k-1$. However, when $n$ goes to infinity, the internal angle at $x$ of the projection of $\tau_i$ on $P_\infty$ goes to $0$ because $x$ belongs to $R^0_b\cup\{a^n,d\}$. Recall in particular that $d$ is placed in $P_\infty$ close to the interior of the line segment with vertices $a^{2n}$ and $b^{2n}$. When $i$ is equal to either $m-1$ or $m$, the property that the internal angle at $x$ goes to $0$ follows from our placement of $u_m^0$ and $u_{m+1}^0$ in such a way that the distance of $u_m^0$ to $u_{m-1}^0$ and $u_m^0$ and $u_{m+1}^0$ is bounded independently from $n$. Since $\lob$ is a continuous function that vanishes at $0$ while $\theta\mapsto\lob(\theta+\pi/2)$ is odd, it follows from (\ref{PW2.sec.6.eq.0}) that if $n$ is large enough, then the volume of the portion of $\mathbb{H}^3$ above any of the triangles $\tau_j$ to $\tau_{k-1}$ is at most $\epsilon\lob(\pi/3)$. Noticing that there are $m+1$ line segments with vertices $u_i^0$ and $u_{i+1}^0$ in $R^0_a$, this provides a bound of
$$
\epsilon(m+1)\lob\biggl(\frac{\pi}{3}\biggr)
$$
on the volume of the portion of $\mathbb{H}^3$ above the upper triangles of $\Xi$ with two vertices in $R^0_a$. Performing a similar estimate for the upper faces of $\Xi$ that are incident to two points from $R^0_b$ shows that, when $n$ is large enough, the hyperbolic volume of the portion of $\mathbb{H}^3$ above them is at most
$$
\epsilon(m-1)\lob\biggl(\frac{\pi}{3}\biggr)\mbox{.}
$$

Adding the volume of the portion of $\mathbb{H}^3$ above the two possible upper triangles of $\Xi$ with at most one vertex in $R^0_a$ and at most one in $R^0_b$ shows that
$$
v^+\leq6\lob\biggl(\frac{\pi}{3}\biggr)+2\epsilon{m}\lob\biggl(\frac{\pi}{3}\biggr)\mbox{.}
$$

As $6+2\epsilon{m}\leq3(2+\epsilon{m})$, this completes the proof.
\end{proof}

Now let us study the set $\mathcal{T}$ of the ideal hyperbolic tetrahedra that correspond to the tetrahedra of $K_{ii}$ via our embedding in $\mathbb{H}^3$. Note that these tetrahedra may have non-disjoint interiors. However, since the tetrahedra in $K_{ii}$ form a triangulation of a topological ball whose boundary is embedded in $\mathbb{H}^3$ as the boundary of $\Xi$, we immediately obtain the following.

\begin{prop}\label{PW2.sec.6.lem.2}
$\displaystyle\sum_{\sigma\in\mathcal{T}}\mathrm{vol}(\sigma)\geq\mathrm{vol}(\Xi)$.
\end{prop}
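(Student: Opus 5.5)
We will argue via a degree (covering) argument: the ideal tetrahedra of $\mathcal{T}$ cover every point of $\Xi$ at least once, so the sum of their volumes is at least $\mathrm{vol}(\Xi)$. Concretely, define a continuous map $f$ from the topological ball $B$ underlying $K_{ii}$ to $\mathbb{H}^3\cup P_\infty\cup\{p_\infty\}$. On vertices, $f$ is the embedding chosen in $P_\infty$. Each arc and triangle of $K_{ii}$ is sent to the half-circle and spherical triangle with the same vertices. Each tetrahedron of $K_{ii}$ is sent to the ideal tetrahedron spanned by the images of its four vertices, say by sending a point with barycentric-type coordinates to the corresponding point in the Klein model, where ideal simplices are straight.

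The first step is to check that this is well defined on shared faces. It is, because the image of a face depends only on its vertex set, since geodesic simplices in $\mathbb{H}^3$ are determined by their vertices. The only subtlety is that $K_{ii}$ need not be a simplicial complex. Under our standing assumption, however, no two arcs share a pair of vertices, and faces are glued consistently, so the map is still defined face by face. By construction, $f$ restricted to $\partial B$ is the given homeomorphism onto $\partial\Xi$; this holds because the boundary faces of $K_{ii}$ are exactly the faces of $\Xi$, as described earlier.

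Next we take a point $y$ in the interior of $\Xi$ that avoids the images of all $2$-dimensional faces of $K_{ii}$ (a measure-zero set). The local degree of $f$ at $y$ equals the degree of $f|_{\partial B}\colon\partial B\to\partial\Xi$ viewed in $\overline{\Xi}\setminus\{y\}\simeq S^2$. That degree is $\pm1$ because $f|_{\partial B}$ is a homeomorphism. The local degree is also the signed count of tetrahedra in $\mathcal{T}$ containing $y$, so at least one tetrahedron of $\mathcal{T}$ contains $y$. Hence the union of the ideal tetrahedra in $\mathcal{T}$ covers $\Xi$ up to measure zero. Subadditivity of volume then gives $\sum_{\sigma\in\mathcal{T}}\mathrm{vol}(\sigma)\geq\mathrm{vol}(\Xi)$.

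The main obstacle is the technical justification of the degree argument in the ideal setting. The vertices lie at infinity, and some tetrahedra of $\mathcal{T}$ may be degenerate (flat, with zero volume) or have overlapping or inverted images. We would handle this by truncating: intersect everything with a large compact region, or work in the closed Klein ball, where ideal simplices are Euclidean simplices and $f$ can be taken piecewise linear. Degree theory then applies directly to a piecewise-linear map of a triangulated $3$-ball whose boundary maps homeomorphically onto the boundary of the polytope $\Xi$. Degenerate tetrahedra contribute zero volume and only affect a measure-zero set, so they do not disturb the inequality.
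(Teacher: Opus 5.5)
Your proposal is correct and follows the paper's approach. The paper only asserts that the inequality is immediate because the tetrahedra of $K_{ii}$ triangulate a ball whose boundary is embedded as $\partial\Xi$, and your degree (covering) argument with straight ideal simplices in the Klein model is the standard justification of that step. The one point worth making explicit is that, since $\Xi$ may be non-convex, the tetrahedra of $\mathcal{T}$ can leave $\Xi$, so you should compute the local degree with target the closed Klein ball minus $y$ rather than $\overline{\Xi}\setminus\{y\}$; it still equals $\pm1$ because $\partial\Xi$ is an embedded sphere and $y$ lies in its bounded complementary component.
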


We now estimate the number of tetrahedra in $\mathcal{T}$ whose volume is at most a fraction of the maximal volume of an ideal hyperbolic tetrahedron.

\begin{lem}\label{PW2.sec.6.lem.2.5}
Consider two numbers $\epsilon$ and $\lambda$ such that $\epsilon$ is positive while $\lambda$ satisfies $0<\lambda<1$. If $n$ is large enough, then the number of tetrahedra contained in $\mathcal{T}$ of volume at most $3\lambda\lob(\pi/3)$ does not exceed
$$
\frac{16n+\epsilon{m}+8}{1-\lambda}\mbox{.}
$$
\end{lem}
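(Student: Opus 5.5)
Let $N$ denote the number of tetrahedra in $\mathcal{T}$ and $N_\lambda$ the number of those of volume at most $3\lambda\lob(\pi/3)$. The plan is a counting argument that combines the upper bound on $N$ with the lower bound on the total volume. By Proposition~\ref{PW2.sec.5.prop.1}, $K_{ii}$ has at most $8n(m+1)+2m+4$ tetrahedra, so $N\leq 8n(m+1)+2m+4$. The total volume is bounded in two ways.

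\emph{Upper bound.} Each tetrahedron of $\mathcal{T}$ has volume at most $3\lob(\pi/3)$ by (\ref{PW2.sec.6.eq.1}). Those counted in $N_\lambda$ have volume at most $3\lambda\lob(\pi/3)$. Therefore
$$
\sum_{\sigma\in\mathcal{T}}\mathrm{vol}(\sigma)\leq 3\lob\Bigl(\frac{\pi}{3}\Bigr)\bigl(N-(1-\lambda)N_\lambda\bigr)\mbox{.}
$$

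\emph{Lower bound.} Proposition~\ref{PW2.sec.6.lem.2} and Lemma~\ref{PW2.sec.6.lem.1} bound the same sum from below. Lemma~\ref{PW2.sec.6.lem.1} will be applied with a smaller parameter $\epsilon'$, chosen in terms of $\epsilon$ below. For $n$ large enough this gives
$$
\sum_{\sigma\in\mathcal{T}}\mathrm{vol}(\sigma)\geq 3\lob\Bigl(\frac{\pi}{3}\Bigr)\bigl(2(4n+1)(m-1)-\epsilon' m-2\bigr)\mbox{.}
$$

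\emph{Combining.} Dividing both bounds by $3\lob(\pi/3)$ yields
$$
(1-\lambda)N_\lambda\leq N-2(4n+1)(m-1)+\epsilon' m+2\mbox{.}
$$
Substituting the bound on $N$, the right-hand side is at most
$$
8n(m+1)+2m+4-(8nm-8n+2m-2)+\epsilon' m+2=16n+\epsilon' m+8\mbox{.}
$$
Taking $\epsilon'=\epsilon$ (or anything smaller) and dividing by $1-\lambda$ gives exactly the stated bound $(16n+\epsilon m+8)/(1-\lambda)$.

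There is no real obstacle here. The step that needs care is the bookkeeping: the $m$-linear terms must cancel exactly, leaving only $16n+8$ plus the $\epsilon m$ error. Checking the arithmetic, $8n(m+1)-8n(m-1)=16n$ and $2m+4-2(m-1)+2=8$, so the constants indeed match the statement. The remaining point is that "$n$ large enough" is inherited directly from Lemma~\ref{PW2.sec.6.lem.1}, applied with the same $\epsilon$.
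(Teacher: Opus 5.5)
Your proof is correct and follows the paper's own argument: you split $\mathcal{T}$ at the volume threshold to bound $\sum_{\sigma\in\mathcal{T}}\mathrm{vol}(\sigma)$ from above via (\ref{PW2.sec.6.eq.1}), bound the same sum from below by Lemma~\ref{PW2.sec.6.lem.1} and Proposition~\ref{PW2.sec.6.lem.2}, and substitute $|\mathcal{T}|\leq 8n(m+1)+2m+4$ from Proposition~\ref{PW2.sec.5.prop.1}, and your arithmetic showing that the $m$-linear terms cancel is right. The auxiliary parameter $\epsilon'$ is unnecessary, since applying Lemma~\ref{PW2.sec.6.lem.1} with $\epsilon$ itself already gives the stated bound.
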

\begin{proof}
Denote by $t$ the number of tetrahedra in $\mathcal{T}$ of hyperbolic volume at most $3\lambda\lob(\pi/3)$. According to (\ref{PW2.sec.6.eq.1}), the hyperbolic volume of all the other tetrahedra in $\mathcal{T}$ is at most $3\lob(\pi/3)$ and as a consequence,
$$
\sum_{\sigma\in\mathcal{T}}\mathrm{vol}(\sigma)\leq3\lambda\lob\biggl(\frac{\pi}{3}\biggr)t+3\lob\biggl(\frac{\pi}{3}\biggr)\bigl(|\mathcal{T}|-t\bigr)\mbox{.}
$$

Hence by Lemma \ref{PW2.sec.6.lem.1} and Proposition \ref{PW2.sec.6.lem.2}, 
\begin{equation}\label{PW2.sec.6.lem.2.5.eq.1}
2(4n+1)(m-1)-\epsilon{m}-2\leq(\lambda-1)t+|\mathcal{T}|
\end{equation}
when $n$ is large enough. However, according to Proposition \ref{PW2.sec.5.prop.1}, 
$$
|\mathcal{T}|\leq8n(m+1)+2m+4
$$
and the result follows by combining this with (\ref{PW2.sec.6.lem.2.5.eq.1}).
\end{proof}

Denote by $\mathcal{E}^\star$ the set of the edges of $\Xi$ shared by two equilateral lower faces of $\Xi$. Further denote by $\mathcal{E}^\star_1$ and $\mathcal{E}^\star_2$ the subsets of the edges in $\mathcal{E}^\star$ that are incident to one or two tetrahedra from $\mathcal{T}$, respectively. The number of edges in $\mathcal{E}^\star_1$ is ultimately bounded by a linear function of $n$ and $m$.

\begin{lem}\label{PW2.sec.6.lem.3}
Consider a positive number $\epsilon$. If $n$ is large enough, then
$$
|\mathcal{E}^\star_1|\leq48n+3\epsilon{m}+24\mbox{.}
$$
\end{lem}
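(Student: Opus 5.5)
The plan is to charge every edge of $\mathcal{E}^\star_1$ to one tetrahedron of $\mathcal{T}$ of small, explicitly computable volume, and then to bound the number of such tetrahedra with Lemma \ref{PW2.sec.6.lem.2.5}.

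\emph{The charged tetrahedron.} Consider an edge $e$ in $\mathcal{E}^\star_1$. It lies on the boundary of $\Xi$, which is the image of the boundary of the topological ball triangulated by $K_{ii}$. Since $e$ is incident to exactly one tetrahedron $\sigma$ of $\mathcal{T}$, the link of $e$ in that ball consists of a single tetrahedron. Hence both boundary faces incident to $e$, namely the two equilateral lower faces of $\Xi$ sharing $e$, are faces of $\sigma$. The four vertices of $\sigma$ therefore lie in $P_\infty$ as the vertices of a rhombus made of two equilateral triangles sharing a side.

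\emph{Volume of $\sigma$.} This volume does not depend on $n$ or $m$. Normalize the vertices to $0$, $1$, $e^{i\pi/3}$, $1+e^{i\pi/3}$ and apply $z\mapsto1/z$, which sends $0$ to $p_\infty$. The other three vertices go to $1$, $e^{-i\pi/3}$, and $e^{-i\pi/6}/\sqrt{3}$. The last point is the centroid of the equilateral triangle with vertices $0$, $1$, $e^{-i\pi/3}$, so the resulting triangle has angles $\pi/6$, $\pi/6$ and $2\pi/3$. By (\ref{PW2.sec.6.eq.0}),
$$
\mathrm{vol}(\sigma)=2\lob\biggl(\frac{\pi}{6}\biggr)+\lob\biggl(\frac{2\pi}{3}\biggr).
$$
The identities $\lob(\pi-\theta)=-\lob(\theta)$ and $\lob(2\theta)=2\lob(\theta)+2\lob(\theta+\pi/2)$, the latter taken at $\theta=\pi/6$, give $\lob(\pi/6)=\tfrac32\lob(\pi/3)$. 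Hence $\mathrm{vol}(\sigma)=2\lob(\pi/3)=3\lambda\lob(\pi/3)$ with $\lambda=2/3$.

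\emph{Injectivity of the charging.} Distinct edges of $\mathcal{E}^\star_1$ must give distinct tetrahedra. Suppose one tetrahedron $\sigma$ were charged by two distinct edges of $\mathcal{E}^\star_1$. Then $\sigma$ would contain at least three equilateral lower faces of $\Xi$. Three of the four faces of a tetrahedron whose vertices lie in the plane $P_\infty$ cannot all be equilateral triangles, since this would force the fourth vertex to coincide with the circumcenter of an equilateral triangle while also being at unit distance from its vertices. So the map $e\mapsto\sigma$ is injective. Every tetrahedron in its image has volume exactly $3\cdot\tfrac23\lob(\pi/3)$.

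\emph{Conclusion.} Applying Lemma \ref{PW2.sec.6.lem.2.5} with $\lambda=2/3$ and the given $\epsilon$ yields, for $n$ large enough,
$$
|\mathcal{E}^\star_1|\leq\frac{16n+\epsilon m+8}{1-2/3}=48n+3\epsilon m+24.
$$

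The step I expect to need the most care is the first one: that an edge incident to a single tetrahedron forces that tetrahedron to contain both adjacent boundary faces. The tetrahedra of $\mathcal{T}$ may overlap in $\mathbb{H}^3$, so this must be argued combinatorially inside $K_{ii}$ and only then transported to $\Xi$. Some care is also needed because $K_{ii}$ is not a priori a simplicial complex, although our standing assumption that no two arcs of $K$ share a pair of vertices should rule out the degenerate configurations. The volume computation only relies on the two Lobachevsky identities recalled above.
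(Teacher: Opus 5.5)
Your proof is correct and follows the paper's argument: you charge each edge of $\mathcal{E}^\star_1$ to its unique incident tetrahedron, whose vertices form a rhombus of two equilateral triangles and whose volume is $2\lob(\pi/3)$, you observe that this charging is injective, and you apply Lemma~\ref{PW2.sec.6.lem.2.5} with $\lambda=2/3$. The differences are cosmetic: you compute the volume by sending a vertex to $p_\infty$ via $z\mapsto1/z$, whereas the paper subtracts the volume above the two upper faces from the volume above the two equilateral lower faces, and you justify injectivity explicitly, which the paper only asserts.
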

\begin{proof}
Observe that the unique tetrahedron $\sigma$ of $\mathcal{T}$ incident to an edge in $\mathcal{E}^\star_1$ has a fixed geometry, which we represent in Fig. \ref{PW2.sec.6.fig.3}. In particular, the hyperbolic volume of $\sigma$ can be computed via (\ref{PW2.sec.6.eq.0}) by subtracting the volume above the upper faces of $\sigma$ from the volume above its lower faces:
$$
\mathrm{vol}(\sigma)=6\lob\biggl(\frac{\pi}{3}\biggr)-2\Biggl(2\lob\biggl(\frac{\pi}{6}\biggr)+\lob\biggl(\frac{2\pi}{3}\biggr)\Biggr)\mbox{.}
$$

\begin{figure}[b]
\begin{centering}
\includegraphics[scale=1]{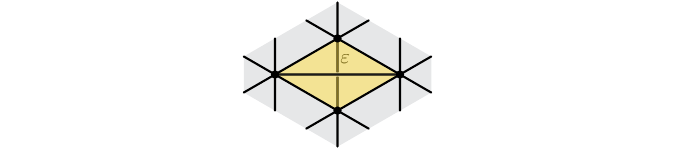}
\caption{The ideal tetrahedron $\sigma$ in Lemma \ref{PW2.sec.6.lem.3}.}\label{PW2.sec.6.fig.3}
\end{centering}
\end{figure}

However, according to \cite[Lemma 1]{Milnor1982},
\begin{equation}\label{PW2.sec.6.lem.3.eq.1}
\lob(2\theta)=2\Biggl(\lob(\theta)+\lob\biggl(\theta+\frac{\pi}{2}\biggr)\Biggr)
\end{equation}
and using this formula with $\theta$ equal to $\pi/6$ yields
$$
\mathrm{vol}(\sigma)=4\lob\biggl(\frac{\pi}{3}\biggr)+2\lob\biggl(\frac{2\pi}{3}\biggr)\mbox{.}
$$

As in addition $\lob$ is odd with period $\pi$,
$$
\mathrm{vol}(\sigma)=2\lob\biggl(\frac{\pi}{3}\biggr)\mbox{.}
$$

Since a tetrahedron in $\mathcal{T}$ cannot be incident to two different edges in $\mathcal{E}^\star_1$, applying Lemma \ref{PW2.sec.6.lem.2.5} with $\lambda$ equal to $2/3$ proves the lemma.
\end{proof}

Let $\mathcal{T}^\star$ be the set of the tetrahedra in $\mathcal{T}$ that are incident to at least one of the equilateral lower face of $\Xi$ and recall that, by Proposition \ref{PW2.sec.5.prop.1}. Thanks to Lemma \ref{PW2.sec.6.lem.3}, we can show that the number of tetrahedra in $\mathcal{T}$ that do not belong to $\mathcal{T}^\star$ is ultimately at most a linear function of $n$ and $m$.

\begin{lem}\label{PW2.sec.6.lem.3.5}
Consider a positive number $\epsilon$. If $n$ is large enough, then
$$
|\mathcal{T}\mathord{\setminus}\mathcal{T}^\star|\leq64n+3\epsilon{m}+30\mbox{.}
$$
\end{lem}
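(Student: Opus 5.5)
The plan is to compare $|\mathcal{T}|$, which Proposition~\ref{PW2.sec.5.prop.1} bounds from above, with a lower bound on $|\mathcal{T}^\star|$ obtained by counting equilateral lower faces. Write $F=2(4n+1)(m-1)$ for the number of equilateral lower faces of $\Xi$ (this count is used in the proof of Lemma~\ref{PW2.sec.6.lem.1}). Each such face is a boundary triangle of $K_{ii}$, so it is incident to exactly one tetrahedron of $\mathcal{T}$. This gives a surjection from the equilateral lower faces onto $\mathcal{T}^\star$. I will show that the fibres of this map have size at most $2$, and that the fibres of size $2$ are in injective correspondence with $\mathcal{E}^\star_1$. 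Then
$$
|\mathcal{T}^\star|\geq F-|\mathcal{E}^\star_1|\mbox{.}
$$

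\textbf{Fibres of size two.} Suppose a tetrahedron $\sigma$ of $\mathcal{T}$ is incident to two equilateral lower faces. Any two faces of a tetrahedron share an edge $e$. That edge lies on both faces, so $e\in\mathcal{E}^\star$. Moreover, $e$ is incident to only one tetrahedron of $\mathcal{T}$: the two boundary triangles at $e$ are both faces of $\sigma$, so the star of $e$ in the ball consists of $\sigma$ alone. Hence $e\in\mathcal{E}^\star_1$. Conversely, as observed in the proof of Lemma~\ref{PW2.sec.6.lem.3}, a tetrahedron of $\mathcal{T}$ cannot be incident to two different edges of $\mathcal{E}^\star_1$. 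So $\sigma\mapsto e$ is an injection from the fibres of size $2$ into $\mathcal{E}^\star_1$.

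\textbf{Ruling out fibres of size three or more.} A tetrahedron with three or four faces on the boundary sphere would have a vertex $v$ whose three incident boundary triangles all belong to $\sigma$. Then $v$ would have degree $3$ in the boundary of $K_{ii}$. But the equilateral lower faces lie in the ribbon, whose vertices have degree at least $4$ (most have degree $6$, as noted in Section~\ref{PW2.sec.4}). This degree check on the ribbon, including its ends near $R^0$ and $R^{2n}$ where degrees are smaller, is the step I expect to require the most care. It should follow from the explicit zigzag structure shown in Fig.~\ref{PW2.sec.6.fig.1}.

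\textbf{Conclusion.} Combining the bound on $|\mathcal{T}^\star|$ with Proposition~\ref{PW2.sec.5.prop.1} and Lemma~\ref{PW2.sec.6.lem.3} gives, for $n$ large enough,
$$
|\mathcal{T}\mathord{\setminus}\mathcal{T}^\star|\leq 8n(m+1)+2m+4-2(4n+1)(m-1)+|\mathcal{E}^\star_1|\leq 16n+6+48n+3\epsilon m+24\mbox{.}
$$
The right-hand side is exactly $64n+3\epsilon m+30$, as claimed.
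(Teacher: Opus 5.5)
Your proof is correct and is essentially the paper's argument. Each of the $2(4n+1)(m-1)$ equilateral lower faces meets exactly one tetrahedron of $\mathcal{T}$, and no tetrahedron meets three of them because no vertex of $\Xi$ lies in only three faces. Tetrahedra meeting two such faces correspond to edges of $\mathcal{E}^\star_1$, so subtracting the resulting lower bound on $|\mathcal{T}^\star|$ from Proposition~\ref{PW2.sec.5.prop.1} and applying Lemma~\ref{PW2.sec.6.lem.3} gives exactly $16n+6+48n+3\epsilon m+24$.

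The paper simply asserts the degree condition for all vertices of $\Xi$, so restricting that check to ribbon vertices, as you do, is enough. One small point: injectivity of $\sigma\mapsto e$ follows directly from $e$ having a single incident tetrahedron, not from the fact you quote from Lemma~\ref{PW2.sec.6.lem.3}.
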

\begin{proof}
As already remarked in the proof of Lemma \ref{PW2.sec.6.lem.1}, $\Xi$ has
\begin{equation}\label{PW2.sec.6.lem.3.5.eq.1}
2(4n+1)(m-1)
\end{equation}
equilateral lower faces. However, the number of tetrahedra in $\mathcal{T}$ that are incident to two such faces is precisely $\mathcal{E}^\star_1$. As no vertex of $\Xi$ is incident to just three faces of $\Xi$, no tetrahedron in $\mathcal{T}$ is incident to more than two faces of $\Xi$.

Hence, when $n$ is large enough, $|\mathcal{T}^\star|$ is at least the difference between~(\ref{PW2.sec.6.lem.3.5.eq.1}) and the bound on $|\mathcal{E}^\star_1|$ provided by Lemma \ref{PW2.sec.6.lem.3}:
\begin{equation}\label{PW2.sec.6.lem.3.5.eq.2}
|\mathcal{T}^\star|\geq2(4n+1)(m-1)-48n-3\epsilon{m}-24\mbox{.}
\end{equation}

However, by Proposition \ref{PW2.sec.5.prop.1},
\begin{equation}\label{PW2.sec.6.lem.3.5.eq.3}
|\mathcal{T}|\leq8n(m+1)+2m+4
\end{equation}
and combining (\ref{PW2.sec.6.lem.3.5.eq.2}) with (\ref{PW2.sec.6.lem.3.5.eq.3}) completes the proof.
\end{proof}

We now show that the number of edges in $\mathcal{E}^\star$ incident to at least three tetrahedra from $\mathcal{T}$ is ultimately bounded by a linear function of $n$ and $m$.

\begin{lem}\label{PW2.sec.6.lem.4}
Consider a positive number $\epsilon$. If $n$ is large enough, then
$$
\bigl|\mathcal{E}^\star\mathord{\setminus}(\mathcal{E}^\star_1\cup\mathcal{E}^\star_2)\bigr|\leq224n+12\epsilon{m}+108\mbox{.}
$$
\end{lem}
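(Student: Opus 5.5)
The plan is a double counting of incidences between the edges in $\mathcal{E}^\star$, the equilateral lower faces of $\Xi$, and the tetrahedra of $\mathcal{T}$. The aim is to show that each edge of $\mathcal{E}^\star\mathord{\setminus}(\mathcal{E}^\star_1\cup\mathcal{E}^\star_2)$ forces a ``defect'' in the way the tetrahedra of $\mathcal{T}$ sit on the equilateral lower faces. Such defects are already controlled by Lemma~\ref{PW2.sec.6.lem.3}, by the count of non-equilateral lower faces, and by the bound on $|\mathcal{T}|$ from Proposition~\ref{PW2.sec.5.prop.1}. The target is an inequality of the shape
$$
\bigl|\mathcal{E}^\star\mathord{\setminus}(\mathcal{E}^\star_1\cup\mathcal{E}^\star_2)\bigr|\leq4\bigl(48n+3\epsilon m+24\bigr)+32n+12,
$$
which is exactly the announced bound. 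The term $32n+12$ should come from the roughly $8n+4$ lower faces of the ribbon that are not equilateral, namely those incident to some $u^i_m$ or $u^i_{m+1}$, each weighted by $4$.

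\textbf{Step 1: local picture at an edge.} Fix $e$ in $\mathcal{E}^\star$, shared by the equilateral lower faces $f_1$ and $f_2$. Since $K_{ii}$ triangulates a ball whose boundary is sent to the boundary of $\Xi$, the tetrahedra of $\mathcal{T}$ containing $e$ form a chain around $e$ from the one incident to $f_1$ to the one incident to $f_2$. If $e$ is incident to at least three tetrahedra, then at least one of them is a ``middle'' tetrahedron $\sigma$. Such a $\sigma$ contains $e$ but neither of the two boundary faces through $e$.

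\textbf{Step 2: charging.} Assign each such $e$ to one of its middle tetrahedra $\sigma$ and split into two cases.
\begin{itemize}
\item[(a)] If $\sigma$ lies in $\mathcal{T}\mathord{\setminus}\mathcal{T}^\star$, charge $e$ to $\sigma$. A tetrahedron has six edges, but only those whose two incident lower faces are both equilateral matter. By the ribbon structure at most a bounded number (I expect four) of its edges can be charged this way. This is where Lemma~\ref{PW2.sec.6.lem.3.5} and Lemma~\ref{PW2.sec.6.lem.3} feed in.
\item[(b)] If $\sigma$ lies in $\mathcal{T}^\star$, then $\sigma$ is incident to an equilateral face $g$ through an edge other than $e$. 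The tetrahedra on $f_1$, on $f_2$ and $\sigma$ are then three tetrahedra sitting near the same small cluster of equilateral faces. Compare $|\mathcal{T}|\leq8n(m+1)+2m+4$ with the fact that each of the $2(4n+1)(m-1)$ equilateral lower faces carries exactly one tetrahedron, and that a tetrahedron carries at most two such faces, with equality only for the tetrahedra counted in $\mathcal{E}^\star_1$. From this comparison, the number of such configurations exceeds $|\mathcal{E}^\star_1|$ by at most the number of non-equilateral lower faces plus the slack already absorbed into Lemma~\ref{PW2.sec.6.lem.3.5}.
\end{itemize}
Summing the two cases, with the multiplicity $4$ from (a), gives the bound above.

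\textbf{Main obstacle.} I expect case (b), and the exact multiplicity in case (a), to be the delicate part. One must check combinatorially, using the specific equilateral ribbon in which most vertices have degree six, that a middle tetrahedron sharing a vertex set with a nearby equilateral face cannot be reused for too many edges of $\mathcal{E}^\star$. If the purely combinatorial count is too lossy, I would first try a volume argument instead. A middle tetrahedron whose vertices include $e$ and a vertex of an adjacent equilateral face should have volume at most $3\lambda\lob(\pi/3)$ for a fixed $\lambda<1$, as in the computation of Lemma~\ref{PW2.sec.6.lem.3}. Lemma~\ref{PW2.sec.6.lem.2.5} would then bound the number of such tetrahedra linearly in $n$ with an $\epsilon m$ error, giving a bound of the same form as the statement.
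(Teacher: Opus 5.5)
Your Step~1 is correct, and case~(a) is the first half of the paper's argument. Case~(b), however, has a genuine gap, and it is where the content of the lemma lies.

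The comparison you describe is between $|\mathcal{T}|\leq 8n(m+1)+2m+4$ and the $2(4n+1)(m-1)$ equilateral lower faces, each carrying one tetrahedron. It only yields $|\mathcal{T}\setminus\mathcal{T}^\star|\leq 16n+6+|\mathcal{E}^\star_1|$, which is Lemma~\ref{PW2.sec.6.lem.3.5}. It says nothing about middle tetrahedra lying in $\mathcal{T}^\star$. Such a $\sigma$ carries exactly one equilateral face, like any other tetrahedron of $\mathcal{T}^\star$, so it produces no excess in the count. No face--tetrahedron count can therefore bound how many of them there are.

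What rules them out is rigidity plus volume, which is the paper's argument:
\begin{itemize}
\item[(1)] Since $\sigma$ is incident to neither face of $\Xi$ through $e$, its equilateral face $g$ cannot contain $e$. So $e$ meets $g$ in exactly one vertex, and the vertex set of $\sigma$ is that of $g$ together with the other endpoint of $e$.
\item[(2)] In the equilateral lattice this forces the configuration of Fig.~\ref{PW2.sec.6.fig.4}. Its volume, computed from (\ref{PW2.sec.6.eq.0}), the identity (\ref{PW2.sec.6.lem.3.eq.1}) at $\theta=\pi/6$, and $\lob(\pi/2)=0$, is $\frac{5}{2}\lob(\pi/3)$.
\item[(3)] $e$ is the only edge for which this $\sigma$ can play the middle role.
\item[(4)] Lemma~\ref{PW2.sec.6.lem.2.5} with $\lambda=5/6$ then bounds the number of such edges by $6(16n+\epsilon m+8)=96n+6\epsilon m+48$.
\end{itemize}

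Your closing paragraph gestures toward this volume argument. It leaves open exactly the points that make it work: the forced geometry, the value $\lambda=5/6$, and the injectivity of $\sigma\mapsto e$. Without them you only get ``a bound of the same form,'' not the stated constants.

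The constants in case~(a) are also off. The paper uses that a tetrahedron of $\mathcal{T}\setminus\mathcal{T}^\star$ is incident to at most two edges of $\mathcal{E}^\star$. Together with Lemma~\ref{PW2.sec.6.lem.3.5}, this gives $128n+6\epsilon m+60$. With your multiplicity four, this case alone would contribute $256n+12\epsilon m+120$, already more than the target.

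Your split $4(48n+3\epsilon m+24)+32n+12$ reproduces the constant numerically but does not correspond to the bounds you invoke. The actual split is $2(64n+3\epsilon m+30)+6(16n+\epsilon m+8)$.
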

\begin{proof}
First note that a tetrahedron contained in $\mathcal{T}\mathord{\setminus}\mathcal{T}^\star$ is incident to at most two edges from $\mathcal{E}^\star$. Hence, by Lemma \ref{PW2.sec.6.lem.3.5}, at most $128n+6\epsilon{m}+60$ edges in $\mathcal{E}^\star$ are incident to a tetrahedron from $\mathcal{T}\mathord{\setminus}\mathcal{T}^\star$ when $n$ is large enough.

Now consider an edge $\varepsilon$ in $\mathcal{E}^\star$ incident to at least three tetrahedra contained in $\mathcal{T}^\star$. Then at least one tetrahedron $\sigma$ in $\mathcal{T}^\star$ is incident to $\varepsilon$ but not to either of the two lower faces of $\Xi$ that share $\varepsilon$ as an edge. Since $\sigma$ belongs to $\mathcal{T}^\star$, this tetrahedron must be incident to an equilateral lower face of $\Xi$. That face necessarily shares a single vertex with $\varepsilon$ and, therefore, the geometry of $\sigma$ must be as shown in Fig.~\ref{PW2.sec.6.fig.4}. We can compute the hyperbolic volume of $\sigma$ using the same strategy as in the proof of Lemma \ref{PW2.sec.6.lem.3} by subtracting the volume above its (unique) upper face from the volume above its lower faces:
$$
\mathrm{vol}(\sigma)=3\lob\biggl(\frac{\pi}{3}\biggr)+2\lob\biggl(\frac{\pi}{6}\biggr)+\lob\biggl(\frac{2\pi}{3}\biggr)-\lob\biggl(\frac{\pi}{6}\biggr)-\lob\biggl(\frac{\pi}{3}\biggr)-\lob\biggl(\frac{\pi}{2}\biggr)\mbox{.}
$$

\begin{figure}[b]
\begin{centering}
\includegraphics[scale=1]{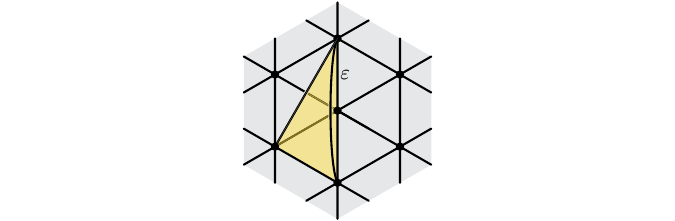}
\caption{The ideal tetrahedron $\sigma$ in Lemma \ref{PW2.sec.6.lem.4}. One of the edges of the upper face of $\sigma$ is bent for clarity.}\label{PW2.sec.6.fig.4}
\end{centering}
\end{figure}

Observing that $\lob$ vanishes at $\pi/2$, and using (\ref{PW2.sec.6.lem.3.eq.1}) with $\theta$ equal to $\pi/6$ yields
\begin{equation}\label{PW2.sec.6.lem.4.eq.1}
\mathrm{vol}(\sigma)=\frac{5}{2}\lob\biggl(\frac{\pi}{3}\biggr)\mbox{.}
\end{equation}

We have shown that for large enough $n$, except for at most $128n+6\epsilon{m}+60$ of them, each edge $\varepsilon$ in $\mathcal{E}^\star\mathord{\setminus}(\mathcal{E}^\star_1\cup\mathcal{E}^\star_2)$ is incident to at least one tetrahedron $\sigma$ in $\mathcal{T}^\star$ that is not incident to either of the two faces of $\Xi$ that share $\varepsilon$. One can see in Fig.~\ref{PW2.sec.6.fig.4} that $\sigma$ cannot play this role for any edge other than $\varepsilon$. Moreover, the hyperbolic volume of $\sigma$ satisfies (\ref{PW2.sec.6.lem.4.eq.1}) and, applying Lemma~\ref{PW2.sec.6.lem.2.5} with $\lambda$ equal to $5/6$ shows that $\mathcal{T}$ contains at most $96n+6\epsilon{m}+48$ such tetrahedra when $n$ is large enough. Hence, for any large enough $n$, the number of edges of $\mathcal{E}^\star$ incident to at least three tetrahedra in $\mathcal{T}$ cannot be greater than the sum of $128n+6\epsilon{m}+60$ and $96n+6\epsilon{m}+48$, as desired.
\end{proof}

Let us now show that the number of faces of $\Xi$ that are not incident to any of the tetrahedra from $\mathcal{T}^\star$ is ultimately at least a linear function of $n$ and $m$. This involves proving that most of the tetrahedra contained in $\mathcal{T}^\star$ are incident to $o$ which we do as a consequence of Lemmas~\ref{PW2.sec.6.lem.3} and~\ref{PW2.sec.6.lem.4}.

\begin{lem}\label{PW2.sec.6.lem.5}
Consider a positive number $\epsilon$. If $n$ is large enough, then the number of faces of $\Xi$ not incident to a tetrahedron from $\mathcal{T}^\star$ is at least
$$
(2-30\epsilon)m-544n-262\mbox{.}
$$
\end{lem}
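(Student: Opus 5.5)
We will count the faces of $\Xi$ incident to tetrahedra of $\mathcal{T}^\star$ by splitting $\mathcal{T}^\star$ into two classes: those that are incident to $o$ and those that are not. The aim is to show that the second class has size $O(n)+O(\epsilon m)$. The first class can then touch only a limited number of faces of $\Xi$, leaving a number of untouched faces linear in $m$.

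Consider first a tetrahedron $\sigma$ in $\mathcal{T}^\star$. It is incident to an equilateral lower face $\phi$ of $\Xi$, and its fourth vertex $x$ is some vertex of $\Xi$. There are three possibilities for $x$.
\begin{itemize}
\item[(a)] $x$ is a vertex of a lower face sharing an edge of $\mathcal{E}^\star$ with $\phi$. In that case, either $\sigma$ is the tetrahedron of Lemma~\ref{PW2.sec.6.lem.3}, incident to an edge in $\mathcal{E}^\star_1$, or that edge is incident to at least two tetrahedra.
\item[(b)] $x$ is adjacent to exactly one vertex of $\phi$ in the ribbon. Then $\sigma$ has the geometry of Lemma~\ref{PW2.sec.6.lem.4}, with volume $\frac{5}{2}\lob(\pi/3)$.
\item[(c)] $x$ is far from $\phi$ in $P_\infty$, with $x\neq o$. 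Then the projection of $\sigma$ is a long thin triangle, so its volume is at most $3\lambda\lob(\pi/3)$ for some fixed $\lambda<1$, and Lemma~\ref{PW2.sec.6.lem.2.5} applies.
\end{itemize}
Applying Lemma~\ref{PW2.sec.6.lem.2.5} with $\lambda$ equal to $5/6$ to cases (b) and (c) bounds the number of such tetrahedra by $96n+6\epsilon m+48$. Lemma~\ref{PW2.sec.6.lem.3} bounds the tetrahedra of case (a) that touch an edge of $\mathcal{E}^\star_1$. Lemma~\ref{PW2.sec.6.lem.4} bounds the edges of $\mathcal{E}^\star$ carrying three or more tetrahedra. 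The remaining edges of $\mathcal{E}^\star$ lie in $\mathcal{E}^\star_2$, so each carries exactly two tetrahedra. Along those edges, the two lower faces must both be covered by a single tetrahedron or by tetrahedra through $o$. A connectivity argument along the ribbon then forces the tetrahedra of case (a) to be incident to edges that are counted in the bad sets. Summing these bounds, all but $O(n)+O(\epsilon m)$ tetrahedra of $\mathcal{T}^\star$ are incident to $o$ and to exactly one equilateral lower face.

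Next, we count the faces touched by tetrahedra of $\mathcal{T}^\star$ through $o$. Such a tetrahedron, with base the face $\phi$, meets $\Xi$ in $\phi$ and possibly in the faces spanned by $o$ and edges of $\phi$. Vertex $o$ has bounded degree in the boundary of $\Xi$ apart from the fan to $a^{n+1},\dots,a^{2n}$. Hence only $O(n)$ faces incident to $o$ exist, and each tetrahedron through $o$ touches essentially only one face of $\Xi$, namely $\phi$.

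The total number of faces of $\Xi$ is about twice the number of its vertices. There are $2(2n+1)(m+1)+2$ vertices, so the face count is roughly $4(2n+1)(m+1)$, more precisely $2|V|-4$. Subtract the faces touched, which is at most $|\mathcal{T}^\star|$ plus $O(n)+O(\epsilon m)$ extra. Then use $|\mathcal{T}^\star|\le|\mathcal{T}|\le 8n(m+1)+2m+4$ from Proposition~\ref{PW2.sec.5.prop.1}. This leaves roughly $(8n+4)(m+1)-8n(m+1)-2m=2m+O(n)$ untouched faces, minus the $\epsilon m$ losses accumulated from Lemmas~\ref{PW2.sec.6.lem.3}, \ref{PW2.sec.6.lem.3.5}, \ref{PW2.sec.6.lem.4} and from the count above. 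Tracking these constants gives the stated $(2-30\epsilon)m-544n-262$.

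The main obstacle is the classification step: showing that a tetrahedron of $\mathcal{T}^\star$ not through $o$ either sits in one of the bounded local configurations or has volume bounded by a fixed fraction of the maximum. This is where the rigid equilateral geometry of the ribbon has to be used carefully.
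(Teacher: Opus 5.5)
\textbf{There is a genuine gap: your case (c) is false, and everything after it depends on it.} An ideal tetrahedron is not small just because its projection on $P_\infty$ is long and thin. Let $\sigma$ contain an equilateral lower face $\phi$ of $\Xi$, with fourth vertex $x$ far from $\phi$ in $P_\infty$. Compute $\mathrm{vol}(\sigma)$ as the paper does, as the volume above its lower faces minus the volume above its upper faces. The cone over $\phi$ contributes $3\lob(\pi/3)$. Every other face of $\sigma$ projects to a triangle with an angle close to $0$ at $x$, so its cone has volume close to $0$ by (\ref{PW2.sec.6.eq.0}), exactly as in the proof of Lemma~\ref{PW2.sec.6.lem.1}. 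Equivalently, an inversion centered at $x$ sends $x$ to $p_\infty$ and $\phi$ to a nearly equilateral triangle. So $\mathrm{vol}(\sigma)$ tends to the \emph{maximum} $3\lob(\pi/3)$ as $x$ recedes. No fixed $\lambda<1$ exists, and Lemma~\ref{PW2.sec.6.lem.2.5} says nothing about these tetrahedra.

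Your reasoning for (c) never uses $x\neq o$, and $o$ is itself such a distant apex for most equilateral faces. Lemma~\ref{PW2.sec.6.lem.2.5} shows that, for any fixed $\lambda<1$, all but $O(n)+O(\epsilon m)$ tetrahedra of $\mathcal{T}$ have volume above $3\lambda\lob(\pi/3)$. So the tetrahedra through $o$, which must make up most of $\mathcal{T}^\star$, are nearly maximal, and no volume bound can separate them from tetrahedra with another distant apex. Hence your central claim, that all but $O(n)+O(\epsilon m)$ tetrahedra of $\mathcal{T}^\star$ contain $o$, is unproved. It is also stronger than anything the paper establishes, since the paper only handles strips contained in $\mathcal{E}^\star_2$. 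Your global count cannot absorb this. Its margin is only about $2m$, which is exactly the number of tetrahedra of $\mathcal{T}^\star$ that could be incident both to an equilateral face at the $R^0$ end of a strip and to a face coming from $\Gamma$. Ruling those out is the real content of the lemma.

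\textbf{The missing idea is combinatorial, not metric.} If an edge of $\mathcal{E}^\star$ lies in $\mathcal{E}^\star_2$, the two tetrahedra containing it are glued along a triangle through that edge. So the tetrahedra on the two adjacent lower faces have the \emph{same} fourth vertex. Your local statement that the two faces are ``covered by a single tetrahedron or by tetrahedra through $o$'' is not right: a single tetrahedron means the edge is in $\mathcal{E}^\star_1$, and nothing local forces the apex to be $o$.

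If a whole strip set $\mathcal{S}_j$ lies in $\mathcal{E}^\star_2$, this propagates, so all $2(4n+1)$ tetrahedra over the strip share one apex. One of them has two vertices in $R^1$ and another has two vertices in $R^{2n}$, and the shape of $K_{ii}$ leaves $o$ as the only possible common vertex.

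The paper then counts only the $2(m+1)$ faces of $\Xi$ coming from $\Gamma$. None of them contains $o$, so a $\Gamma$-face incident to a tetrahedron of $\mathcal{T}^\star$ sits at an end of a strip not contained in $\mathcal{E}^\star_2$. Lemmas~\ref{PW2.sec.6.lem.3} and~\ref{PW2.sec.6.lem.4} bound the number of such strips by $272n+15\epsilon m+132$. With two ends per strip this gives $2(m+1)-2(272n+15\epsilon m+132)=(2-30\epsilon)m-544n-262$. Your final constants are asserted rather than derived.
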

\begin{proof}
Let us fix an index $j$ such that $1\leq{j}\leq{m-1}$ and denote
$$
U_j=\bigl\{u_j^0,\ldots,u_j^{2n}\bigr\}\cup\bigl\{u_{2m+1-j}^0,\ldots,u_{2m+1-j}^{2n}\bigr\}\mbox{.}
$$

Further denote by $\mathcal{S}_j$ the subset made of the edges in $\mathcal{E}^\star$ with one extremity in $U_j$ and the other in $U_{j-1}$ with the convention that
$$
U_0=\bigl\{a^0,\ldots,a^{2n}\}\cup\{b^0,\ldots,b^{2n}\bigr\}\mbox{.}
$$

The faces of $\Xi$ incident to at least one edge from $\mathcal{S}_j$ form one of the $m-1$ oblique strips of equilateral triangles shown in Fig.~\ref{PW2.sec.6.fig.2}. 
It follows from Lemmas~\ref{PW2.sec.6.lem.3} and~\ref{PW2.sec.6.lem.4}, that if $n$ is large enough, then at most
$$
272n+15\epsilon{m}+132
$$
of the sets $\mathcal{S}_1$ to $\mathcal{S}_{m-1}$ are not entirely contained in $\mathcal{E}^\star_2$.

Observe that if $\mathcal{S}_j$ is a subset of $\mathcal{E}^\star_2$ then there are exactly $2(4n+1)$ tetrahedra in $\mathcal{T}^\star$ that are incident to some triangle in the corresponding strip and that all of them must share a vertex. Denote by $\mathcal{T}^\star_j$ the set of these tetrahedra. One of these tetrahedra has two vertices in $R^1$ and another has two vertices in $R^{2n}$. One can see, revisiting Fig.~\ref{PW2.sec.5.fig.1} that $o$ is the only possible common vertex for the two corresponding tetrahedra of $K_{ii}$, and therefore the only possible common vertex for all tetrahedra in $\mathcal{T}^\star_j$. In other words, if $\mathcal{S}_j$ is a subset of $\mathcal{E}^\star_2$, then all the tetrahedra in $\mathcal{T}^\star_j$ admit $o$ as a vertex. 

In order to prove the lemma, it suffices to show that at most
$$
544n+30\epsilon{m}+264
$$
of the $2(m+1)$ faces of $\Xi$ obtained by embedding a triangle from $\Gamma$ in $\mathbb{H}^3$ can be incident to a tetrahedron from $\mathcal{T}^\star$. Consider such a face $\tau$ of $\Xi$ and denote by $\sigma$ the tetrahedron in $\mathcal{T}$ that is incident to $\tau$. Observe that in order for $\sigma$ to belong to $\mathcal{T}^\star$, that tetrahedron must be incident to a lower face of $\Xi$ at an end of one of the oblique strip of equilateral triangles shown in Fig.~\ref{PW2.sec.6.fig.2}. Let $\mathcal{S}_j$ be the set of edges corresponding to that strip. One can see in Fig.~\ref{PW2.sec.5.fig.1} that $o$ cannot be a vertex of a triangle from $\Gamma$ and therefore it cannot be a vertex of $\tau$ or $\sigma$. By the above, $\mathcal{S}_j$ then cannot be a subset of $\mathcal{E}^\star_2$. As each strip has two ends, the number of the faces of $\Xi$ that correspond with a triangle from $\Gamma$ and are incident to a tetrahedron from $\mathcal{T}^\star$ cannot exceed twice the number of the sets $\mathcal{S}_1$ to $\mathcal{S}_{m-1}$ that are not a subset of $\mathcal{E}^\star_2$, as desired.
\end{proof}

We are ready to finally prove Theorem~\ref{PW2.sec.4.lem.2}

\begin{proof}[Proof of Theorem \ref{PW2.sec.4.lem.2}]
It suffices to show that, when $n$ and $m/n$ are large enough, $K$ contains two different arcs with the same pair of vertices. Assuming otherwise and setting $\epsilon$ to $1/30$, it follows from Lemma \ref{PW2.sec.6.lem.3.5} that
\begin{equation}\label{PW2.sec.4.lem.2.eq.1}
|\mathcal{T}\mathord{\setminus}\mathcal{T}^\star|\leq\frac{m}{10}+64n+30
\end{equation}
when $n$ is large enough. However, according to Lemma \ref{PW2.sec.6.lem.5}, the number of faces of $\Xi$ that are not incident to a tetrahedron from $\mathcal{T}^\star$ is least $m-544n-238$. As no vertex of $\Xi$ is contained in less than four faces of $\Xi$, every tetrahedron of $\mathcal{T}$ is incident to at most two faces of $\Xi$. As a consequence,
\begin{equation}\label{PW2.sec.4.lem.2.eq.2}
|\mathcal{T}\mathord{\setminus}\mathcal{T}^\star|\geq\frac{m}{2}-272n-131
\end{equation}
when $n$ is large enough. Fixing $n$ and letting $m$ go to infinity in (\ref{PW2.sec.4.lem.2.eq.1}) and (\ref{PW2.sec.4.lem.2.eq.2}) results in a contradiction. In particular, when $n$ and $m/n$ are both large enough, $K$ must contain two different arcs with the same vertex pair.
\end{proof}

\medskip
\noindent{\bf Acknowledgement.} The first author is partially supported by the ANR project SUGAR (Surfaces, G{\'e}ométrie et Algorithmes), grant number ANR-25-CE40-0416 and the second author by the National Natural Science Foundation of China (Grant No. 12301432) and the Natural Science Foundation of Guangdong Province (Grant No. 2023A1515010658). The manuscript was partly written while the second author was visiting the first one thanks to funding of the MathSTIC (CNRS FR3734) research consortium.

\bibliography{FlipGraphNonConvexity}
\bibliographystyle{ijmart}

\end{document}